\numberwithin{equation}{section}
\newtheorem{theorem}{Theorem}
\newtheorem{thm}[theorem]{Theorem}
\newtheorem{cor}[theorem]{Corollary}
\newtheorem{lemma}[theorem]{Lemma}
\newtheorem{prop}[theorem]{Proposition}
\newtheorem*{maintheorem}{Theorem \ref{main}}
\newtheorem*{maincorollary}{Corollary \ref{corThmPrincipal}}
\theoremstyle{definition}
\newtheorem{defi}[theorem]{Definition}
\newtheorem{rem}[theorem]{Remark}
\newtheorem{problem}[theorem]{Problem}
\theoremstyle{remark}
\newtheorem{remark}[theorem]{Remark}
\numberwithin{theorem}{section}
\newcommand{\eps}{\varepsilon}
\newcommand{\Q}{\mathbb{Q}}
\newcommand{\N}{\mathbb{N}}
\newcommand{\R}{\mathbb{R}}
\newcommand{\tn}{{\vert\kern-0.25ex\vert\kern-0.25ex\vert}}
\newcommand{\Tn}{{\big\vert\kern-0.25ex\big\vert\kern-0.25ex\big\vert}}    
\newcommand{\TN}{{\Big\vert\kern-0.25ex\Big\vert\kern-0.25ex\Big\vert}}    
\DeclareSymbolFont{fouriersymbols}{FMS}{futm}{m}{n}
\DeclareSymbolFont{fourierlargesymbols}{FMX}{futm}{m}{n}
\DeclareMathDelimiter{\VERT}{\mathord}{fouriersymbols}{152}{fourierlargesymbols}{147}
\begin{document}

\title[Coarse embeddings into superstable spaces]{Coarse embeddings into superstable spaces}
\subjclass[2010]{Primary: 46B80} 
 \keywords{ Coarse embeddings, stability, superstable Banach spaces. }
\author{B. M. Braga and A. Swift}
\address{Department of Mathematics, Statistics, and Computer Science (M/C 249)\\
University of Illinois at Chicago\\
851 S. Morgan St.\\
Chicago, IL 60607-7045\\
USA}\email{demendoncabraga@gmail.com}

\address{Department of Mathematics\\
Texas A\&M University\\
155 Ireland St.\\
College Station, TX 77843-3368\\
USA}\email{ats0@math.tamu.edu}

\date{}
\maketitle

\begin{abstract}
Krivine and Maurey proved in 1981 that every stable Banach space contains  almost isometric copies of $\ell_p$, for some $p\in[1,\infty)$. In 1983, Raynaud showed that if a Banach space uniformly embeds into a superstable Banach space, then $X$ must contain an isomorphic copy of $\ell_p$, for some $p\in[1,\infty)$. In these notes, we show that if a Banach space coarsely embeds into a superstable Banach space, then $X$ has a spreading model isomorphic to  $\ell_p$, for some $p\in[1,\infty)$. In particular, we obtain that there exist reflexive Banach spaces which do not coarsely embed into any superstable Banach space.  
\end{abstract}

\section{Introduction.}\label{SectionIntro}

 D. J. Aldous showed in \cite[Theorem 1.1]{Aldous1981}, that every infinite-dimensional subspace of $L_1$ contains an isomorphic copy of $\ell_p$, for some $p\in [1,\infty)$.  In order to generalize Aldous's result, J.  Krivine and B. Maurey introduced the notion of stable Banach space in \cite{KrivineMaurey1981}. To the authors' knowledge, stability was defined for general metric spaces by D. J. H. Garling \cite{Garling1981}.  A metric space $(M,d)$ is called \emph{stable} if
 
\[\lim_{i,\mathcal{U}}\lim_{j,\mathcal{V}}d(x_i,y_j)= \lim_{j,\mathcal{V}}\lim_{i,\mathcal{U}}d(x_i,y_j),\]\hfill
 
\noindent for all bounded sequences $(x_i)_{i=1}^\infty$ and $(y_j)_{j=1}^\infty$ in $M$, and all nonprincipal ultrafilters $\mathcal{U}$ and $\mathcal{V}$ over $\N$. A Banach space $X$ is called \emph{stable} if $(X,\|\cdot-\cdot\|)$ is stable as a metric space. As $L_p$ is stable for all $p\in [1,\infty)$ (see  \cite[Theorem II.2]{KrivineMaurey1981}), the following is a generalization of Aldous's result (see \cite[Theorem IV.1]{KrivineMaurey1981}).

\begin{thm}\textbf{(J. Krivine and B. Maurey, 1981)}
Let $X$ be a stable Banach space. There exists $p\in [1,\infty)$ such that $X$ contains a $(1+\eps)$-isomorphic copy of  $\ell_p$, for all $\epsilon>0$.
\end{thm}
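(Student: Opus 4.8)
The plan is to prove this by the \emph{method of types} of Krivine and Maurey.

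\textbf{Types.} Since $\ell_p$ embeds $(1+\eps)$-isomorphically into $X$ as soon as it does into a separable subspace, we may assume $X$ is separable (and infinite-dimensional). A \emph{type} on $X$ is a function $\tau\colon X\to\R$ of the form $\tau(x)=\lim_{n,\mathcal U}\|x+x_n\|$ for a bounded sequence $(x_n)\subseteq X$ and a nonprincipal ultrafilter $\mathcal U$; write $\mathcal T=\mathcal T(X)$ for the set of all types, with the topology of pointwise convergence. Every type is $1$-Lipschitz, and for each $R>0$ the set $\{\tau\in\mathcal T:\tau(0)\le R\}$ is compact and metrizable in this topology (using separability of $X$). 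A type is \emph{trivial} if it has the form $x\mapsto\|x+x_0\|$, i.e.\ if some (equivalently every) realizing sequence converges; since $X$ is infinite-dimensional, Riesz's lemma produces a bounded sequence with no convergent subsequence, hence a nontrivial type. Crucially, every type is realized by an honest sequence \emph{inside} $X$, so metric information carried by types transfers directly to the geometry of $X$.

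\textbf{Convolution and dilation.} For $\sigma,\tau\in\mathcal T$ realized by $(x_n),(y_m)$ (with ultrafilters $\mathcal U,\mathcal V$) define the \emph{convolution}
\[ (\sigma\ast\tau)(x)=\lim_{n,\mathcal U}\lim_{m,\mathcal V}\|x+x_n+y_m\|, \]
and for $t\ge 0$ the \emph{dilation} $(\delta_t\tau)(x)=\lim_{n,\mathcal U}\|x+t x_n\|$. One checks that $\ast$ is well defined (independent of representatives and ultrafilters) and associative, that $\delta_t$ is continuous with $\delta_0\tau$ the trivial type at the origin, $\delta_s\delta_t=\delta_{st}$, $\delta_t(\sigma\ast\rho)=\delta_t\sigma\ast\delta_t\rho$, and $\sigma\ast\delta_0\tau=\sigma$. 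The point where \emph{stability} enters is that it is \emph{exactly} the commutativity law $\sigma\ast\tau=\tau\ast\sigma$: expanding the two sides shows they agree for every $x$ if and only if the stability equation holds for the bounded sequences $\big(-(x+x_n)\big)$ and $(y_m)$. Thus in a stable space the local geometry is encoded in a commutative, associative ``algebra'' of types carrying a homothety action.

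\textbf{Extracting $\ell_p$ (the main obstacle).} The crux is to produce a nontrivial type $\tau$ and an exponent $p\in[1,\infty)$ satisfying the functional identity of the type at infinity of the $\ell_p$ unit vector basis,
\[ \delta_\lambda\tau\ast\delta_\mu\tau=\delta_{(\lambda^p+\mu^p)^{1/p}}\,\tau\qquad\text{for all }\lambda,\mu\ge 0. \]
This is the delicate step. One argues by minimality: using compactness of the relevant sets of types together with Zorn's lemma, choose a nontrivial type $\tau$ that is as ``simple'' as possible, in the sense that every type obtained from $\tau$ by dilations and convolutions lies, up to translation, in the homothety orbit of $\tau$; minimality then forces $\delta_\lambda\tau\ast\delta_\mu\tau$ to be (a translate of) a single dilate $\delta_{\psi(\lambda,\mu)}\tau$. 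The structural laws above now pin down $\psi\colon[0,\infty)^2\to[0,\infty)$: commutativity gives $\psi(\lambda,\mu)=\psi(\mu,\lambda)$, associativity gives $\psi(\psi(\lambda,\mu),\nu)=\psi(\lambda,\psi(\mu,\nu))$, $\delta_t(\sigma\ast\rho)=\delta_t\sigma\ast\delta_t\rho$ gives homogeneity $\psi(t\lambda,t\mu)=t\psi(\lambda,\mu)$, and $\sigma\ast\delta_0\tau=\sigma$ gives $\psi(\lambda,0)=\lambda$. A continuous operation on $[0,\infty)$ with these properties must be $\psi(\lambda,\mu)=(\lambda^p+\mu^p)^{1/p}$ for some $p\in[1,\infty]$, and the value $p=\infty$ is excluded by stability. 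This determines $p$ and establishes the displayed identity; after a standard translation device we may further take $\tau$ symmetric with $\tau(0)=1$.

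\textbf{Conclusion: a good sequence in $X$.} Realize $\tau$ by a genuine sequence $(v_n)\subseteq X$ and construct $(u_k)\subseteq X$ recursively: having picked $u_1,\dots,u_{k-1}$ and fixed a finite $\eps_k$-net of coefficient vectors (say with $\sup_i|a_i|\le 1$), take $u_k=v_{n_k}$ with $n_k$ large enough, along the defining ultrafilter, that
\[ \Big|\ \big\|\sum_{i\le k}a_iu_i\big\|-\lim_{n,\mathcal U}\big\|\sum_{i<k}a_iu_i+a_kv_n\big\|\ \Big|<\eps_k \]
for every coefficient vector in the net. Since the type identity evaluates $\lim_{n,\mathcal U}\|w+a_kv_n\|$ as $\big(\|w\|^p+|a_k|^p\big)^{1/p}$ and $s\mapsto(s^p+c)^{1/p}$ is $1$-Lipschitz, these estimates telescope: choosing $\sum_k\eps_k<\eps$ yields $\big|\,\|\sum_i a_iu_i\|-(\sum_i|a_i|^p)^{1/p}\,\big|<\eps$ on the net, and a routine homogeneity-and-net argument upgrades this to the statement that $(u_k)$ is $(1+\eps)$-equivalent to the unit vector basis of $\ell_p$. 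Its closed linear span is the desired $(1+\eps)$-isomorphic copy of $\ell_p$, and $\eps>0$ was arbitrary.
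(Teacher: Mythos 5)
Your outline is the standard Krivine--Maurey type-space argument; note that the paper does not reprove this theorem (it cites \cite{KrivineMaurey1981}), but its Sections 4--7 are precisely a coarse adaptation of this strategy, so the comparison below is against that machinery. Your setup (types, convolution, dilation, stability $=$ commutativity, and the final gliding-hump extraction of a $(1+\eps)$-copy of $\ell_p$ from an $\ell_p$-type) is correct, and the functional-equation step pinning down $\psi(\lambda,\mu)=(\lambda^p+\mu^p)^{1/p}$ from commutativity, associativity, homogeneity, the identity element, and the triangle inequality for types is the classical argument (with the $c_0$/$p=\infty$ case excluded because it would put an almost isometric copy of $c_0$ inside the stable space $X$).

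The genuine gap is in the step you yourself flag as delicate. Zorn's lemma applied to closed ``conic classes'' (sets of symmetric types closed under dilation and convolution) does produce a \emph{minimal} such class, but it does not produce a type $\tau$ with the property that every convolution of dilates of $\tau$ is again a dilate of $\tau$; your phrase ``choose a nontrivial type $\tau$ that is as simple as possible, in the sense that every type obtained from $\tau$ by dilations and convolutions lies in the homothety orbit of $\tau$'' builds the desired conclusion into the definition of the object whose existence is exactly what must be proved. To get $\tau\ast a\cdot\tau=f(a)\cdot\tau$ from minimality one needs two further nontrivial ingredients: (i) convolution is only \emph{separately} continuous, hence Baire class 1, so one must invoke a Baire-category argument to find a single type that is a common continuity point of all the maps $\sigma\mapsto\alpha_1\cdot\sigma\ast\cdots\ast\alpha_m\cdot\sigma$ (this is the content of the paper's Lemmas \ref{bairelemma2}, \ref{sepcont} and \ref{commonpoint}); and (ii) a Krivine-type spectral argument -- the operators $T_{\overline{\alpha}}$ on the spreading model of a defining sequence are non-invertible and commute, hence admit common approximate eigenvectors (\cite[Proposition IV.1]{KrivineMaurey1981}, \cite[Proposition 12.18]{BenyaminiLindenstrauss}) -- which shows that the subset of the minimal class consisting of limits of approximate eigen-types is itself a closed admissible conic class, hence everything; only then does continuity at the common point yield the exact identity (the paper's Lemmas \ref{case3}--\ref{limitofseq} and Proposition \ref{theoremspecialtype}). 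Without (i) and (ii) the assertion that minimality ``forces'' the identity is unsupported. A second, minor, imprecision: the identity evaluates $\lim_{n,\mathcal U}\|w+a_kv_n\|$ as $(\|w\|^p+|a_k|^p)^{1/p}$ only when $w$ itself approximately realizes a dilate of a convolution power of $\tau$ (as the partial sums in your recursion do), not for arbitrary $w\in X$; the construction goes through, but should be phrased that way.
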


In order to prove the theorem above, Krivine and Maurey looked  at \emph{types} on a stable Banach space $X$, i.e., functions $\sigma_a\colon X\to \R$ given by $\sigma_a(x)=\|x+a\|$, where $a$ is an element of some ultrapower of $X$. In \cite{KrivineMaurey1981}, the authors showed that every stable Banach space must contain what was called an \emph{$\ell_p$-type}, which results in the existence of almost isometric copies of $\ell_p$ inside $X$, for some $p\in [1,\infty)$.

As shown in \cite{Raynaud1983}, Krivine and Maurey's result can be extended to the nonlinear setting as follows. Let $(M,d)$ and $(N,\partial)$ be metric spaces, $f\colon M\to N$ be a map, and define

\begin{equation}\label{omega}
\omega_f(t)=\sup\{\partial(f(x),f(y))\mid d(x,y)\leq t\}
\end{equation}\hfill

\noindent and

\begin{equation}\label{rho}
\rho_f(t)=\inf\{\partial(f(x),f(y))\mid d(x,y)\geq t\}
\end{equation}\hfill

\noindent for all $t\geq 0$.

We say that $f$ is a \emph{uniform embedding} if $\lim_{t\to 0^+}\omega_f(t)=0$ (i.e., if $f$ is uniformly continuous) and $\rho_f(t)>0$, for all $t>0$ (i.e., if $f^{-1}$ exists and is uniformly continuous).  We say that $f$ is a \emph{uniform equivalence} if $f$ is a uniform embedding and $f$ is surjective.  We say that a Banach space $X$ is \emph{superstable} if every Banach space which is finitely representable in $X$ is also stable. Raynaud proved the following in \cite{Raynaud1983} (see the corollary on page 34 of \cite{Raynaud1983}).

\begin{thm}\textbf{(Y. Raynaud, 1983)}
If a Banach space $X$ uniformly embeds into  a superstable Banach space, then $X$ contains an isomorphic copy of $\ell_p$, for some $p\in [1,\infty)$. 
\end{thm}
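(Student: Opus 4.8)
The plan is to transport the linear stability of the superstable target $Y$ back onto $X$ through the uniform embedding $f\colon X\to Y$, and then to carry out a Krivine--Maurey-style analysis of types on $X$ with respect to the transported (non-homogeneous) metric. First I would record two easy reductions. Since a uniformly continuous map out of a Banach space is Lipschitz for large distances, one has $\omega_f(t)\le C(t+1)$ for some constant $C$; in particular $f$ sends norm-bounded sets to norm-bounded sets. And since $\omega_f(t)\to 0$ as $t\to 0^+$ while $\rho_f(t)>0$ for every $t>0$, the pulled-back metric $\delta(x,x'):=\|f(x)-f(x')\|_Y$ on $X$ satisfies $\rho_f(\|x-x'\|)\le\delta(x,x')\le\omega_f(\|x-x'\|)$ and is therefore uniformly equivalent to $\|\cdot-\cdot\|$. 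The crucial observation is that $\delta$ is a \emph{stable} metric on $X$: if $(x_i)$ and $(y_j)$ are norm-bounded in $X$, then $(f(x_i))$ and $(f(y_j))$ are norm-bounded in $Y$, so the double-limit identity for $\delta(x_i,y_j)=\|f(x_i)-f(y_j)\|_Y$ is precisely the stability of $Y$. Note that this does \emph{not} follow from stability of the norm of $X$ (which need not hold); it genuinely uses that $\delta$ is pulled back from $Y$. Thus $X$ becomes a Banach space equipped with a stable metric uniformly equivalent to its norm.

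Next I would install the type machinery of \cite{KrivineMaurey1981} relative to $\delta$: $\delta$-types are pointwise limits along a nonprincipal ultrafilter $\cU$ of functions $x\mapsto\delta(x,a_i)$, with $(a_i)$ ranging over norm-bounded sequences in $X$. Stability of $\delta$ should make the set of $\delta$-types into a commutative semigroup under a convolution operation assembled from the translations of $X$; the translations are not $\delta$-isometries, since $\delta$ is not translation-invariant, but their $\delta$-uniform continuity, with quantitative control by $\omega_f$ and $\rho_f$, should suffice to run the argument with controlled errors. Performing the Krivine--Maurey minimality and extraction argument inside this (approximate) semigroup should then produce a nontrivial ``$\ell_p$-type'': an exponent $p\in[1,\infty)$ and a sequence $(e_n)$ in $X$ whose $\delta$-geometry, together with that of its scalar multiples, matches the unit vector basis of $\ell_p$ uniformly on norm-bounded blocks.

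Finally I would convert this $\ell_p$-type into a genuine isomorphic copy of $\ell_p$ inside $(X,\|\cdot\|)$. Because $\delta$ is not homogeneous, the sequence $(e_n)$ is only $\delta$-equivalent to the $\ell_p$ basis on norm-bounded configurations, so to recover norm information at every scale I would homogenize: pass to a spreading model of a suitable block sequence of $(e_n)$, or to an ultrapower of $X$ assembled from the dilates $te_n$, $t>0$, in which a homogeneous $\ell_p$-like norm emerges. Here is where superstability — and not merely stability — of $Y$ enters: the limiting normed spaces produced by this homogenization must themselves be stable for the $\ell_p$-type structure to survive the passage to the limit, and this is exactly what superstability of $Y$ guarantees. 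Tracking the $\omega_f/\rho_f$ bounds through the limit then shows that the closed linear span of the resulting sequence is isomorphic to $\ell_p$, with an isomorphism constant that degrades according to those moduli (which is precisely why only an isomorphic, and not an almost isometric, copy can be expected).

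The genuine difficulty, in my view, lies in the last two steps: executing the Krivine--Maurey convolution-semigroup argument for a stable metric that is \emph{not} homogeneous under scaling and \emph{not} translation-invariant, and then arranging the homogenization so that it both transforms $\delta$-data into norm-data and keeps one inside a normed space to which the superstability hypothesis applies — all while propagating the quantitative information carried by $\omega_f$ and $\rho_f$ through every extraction and ultrafilter limit. By contrast, the pull-back in the first step and the large-scale Lipschitz estimate for $\omega_f$ are entirely routine.
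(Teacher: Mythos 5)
There is a genuine gap at the heart of your second step. You propose to run the Krivine--Maurey convolution-of-types machinery directly on the pulled-back metric $\delta(x,x')=\|f(x)-f(x')\|_Y$, conceding that $\delta$ is not translation-invariant but asserting that ``controlled errors'' will suffice. They will not: the convolution $\sigma\ast\tau=\lim_n\lim_m\overline{(x_n+y_m)}$ is well-defined only because one can write $d(x_n+y_m,z)=d(x_n,z-y_m)$ and then invoke stability to exchange the limits; without translation invariance there is no such reduction, and the double limit need not exist nor be independent of the defining sequences. Moreover the discrepancy $|\delta(x+z,y+z)-\delta(x,y)|$ is not small --- both quantities are merely squeezed between $\rho_f(\|x-y\|)$ and $\omega_f(\|x-y\|)$, i.e., the failure of invariance is of the same order as $\delta$ itself --- so there is no error term tending to zero for your ``approximate semigroup'' to absorb. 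This is precisely why Raynaud's proof (and the present paper, in Theorem \ref{embTOultra} and Corollary \ref{invmetric2}) first replaces $\delta$ by a \emph{translation-invariant} stable (pseudo)metric uniformly equivalent to the norm, obtained by averaging $\delta$ over translations; Raynaud's averaging uses the uniform continuity of $f$, while the paper instead applies the Markov--Kakutani fixed-point theorem so as to also cover coarse maps.

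Relatedly, you have misplaced where superstability is used. Your pulled-back $\delta$ is stable using only the stability of $Y$; but the translation-invariant averaged metric is realized by a map into an ultrapower of ($\ell_1$-sums of) $Y$, and an ultrapower of a stable space need not be stable --- all ultrapowers of $Y$ are stable exactly when $Y$ is superstable. So superstability is what makes the invariant metric stable in the first place, not merely what preserves the $\ell_p$-structure through a final homogenization. Once the invariant stable metric uniformly equivalent to the norm is in hand, the remainder of your outline (types, minimality, extraction of an $\ell_p$-type, and conversion to a norm-isomorphic copy of $\ell_p$ via the two-sided bounds $\rho_f\le d\le\omega_f$) does track Raynaud's argument as the paper summarizes it.
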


 Raynaud's proof is also based on analyzing a space of types over the Banach space $X$. Precisely, the author shows that if $X$ uniformly embeds into a superstable Banach space, then there exists a translation-invariant stable metric $d$ on $X$ uniformly equivalent to the metric induced by the norm.  A metric $d$ defined for a Banach space $X$ is called \emph{translation-invariant} if $d(x,y)=d(x-y,0)$ for all $x,y\in X$.  Two metrics $d$ and $\partial$ defined for the same set $M$ are called \emph{uniformly equivalent} if the identity map $\mathrm{Id}\colon (M,d)\to (M,\partial)$ is a uniform equivalence.  Assuming $(X,\|\cdot\|)$ is separable, once one has a translation-invariant stable metric $d$ on $X$ uniformly equivalent to the norm, it is possible to define a space of types for $(X,d)$ similarly to the type space defined by Krivine and Maurey.
 Raynaud shows that the type space of $(X,d)$ must contain a so-called \emph{$\ell_p$-type}, for some $p\in[1,\infty)$, which implies the existence of an isomorphic copy of $\ell_p$ inside $(X,\|\cdot\|)$, for some $p\in [1,\infty)$. For more on stability and types on Banach spaces see  \cite{Garling1981}, \cite{G-D}, \cite{HaydonMaurey1986} and \cite{Iovino1998}.

The interest in Banach spaces as metric spaces and their nonlinear geometric properties has recently increased significantly, hence the question whether analogues of Raynaud's result hold for different kinds of nonlinear embeddings other than uniform embeddings becomes natural. Given metric spaces $(M,d)$ and $(N,\partial )$, and a map $f\colon M\to N$, we say that $f$ is \emph{expanding} if $\lim_{t\to\infty}\rho_f(t)=\infty$ and \emph{coarse} if $\omega_f(t)<\infty$, for all $t\geq 0$.  We say that $f$ is a \emph{coarse embedding} if $f$ is both coarse and expanding.  In this context,  N. Kalton asked the following in \cite[Problem 6.6]{Kalton2007}.

\begin{problem}\label{ProbKalton}
Assume that a Banach space $X$ coarsely embeds into a superstable Banach space. Does it follow that $X$ contains an isomorphic copy of $\ell_p$, for some $p\in [1,\infty)$?
\end{problem}

Although we were not able to obtain an answer to Kalton's problem, we obtain the following result in these notes.

\begin{theorem}\label{main}
If a Banach space $X$ coarsely embeds into a superstable Banach space, then $X$ has a basic sequence with an associated spreading model isomorphic to $\ell_p$, for some $p\in[1,\infty)$.
\end{theorem}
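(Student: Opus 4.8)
The plan is to follow Raynaud's strategy but to replace his uniform‑embedding hypothesis with the coarse one, which forces us to work at large scales only and therefore to abandon the hope of a genuine metric on all of $X$ in favor of an asymptotic/ultraproduct construction producing a \emph{spreading model} rather than a subspace. First I would fix a coarse embedding $f\colon X\to Y$ with $Y$ superstable, and pass to the structure that survives coarsening: rather than building a translation‑invariant stable metric on $X$ itself, I would build, along a normalized basic sequence $(e_n)$ in $X$ (obtained e.g.\ by Mazur's technique after passing to a separable infinite‑dimensional subspace), a ``type at infinity'' by taking an iterated ultralimit of the functions $x\mapsto \|f(x+\sum_{n\in E}\lambda_n e_{n})-f(x)\|_Y$ as the finite sets $E$ march off to infinity. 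The point of inserting $f$ is that superstability of $Y$ passes to any Banach space finitely representable in $Y$, and the spreading model of $(e_n)$ — call it $(\tilde e_n)$ with its basis constant and spreading‑model norm $\|\cdot\|_s$ — will inherit, through the ultralimits and the coarse bi‑Lipschitz‑at‑large‑scale behavior of $f$, a \emph{stable} structure on the associated sequence space; one then applies the Krivine–Maurey theorem to that stable space to extract an $\ell_p$.

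The key steps, in order, are: (1) reduce to $X$ separable and extract a normalized basic sequence $(e_n)$; pass to a subsequence so that $(e_n)$ generates a spreading model $(\tilde e_n)$. (2) Use the coarse embedding together with a gliding‑hump/perturbation argument to show that for the spreading‑model space $E_\infty=\overline{\mathrm{span}}(\tilde e_n)$ there is a map $g\colon E_\infty\to Z$ into some ultrapower $Z$ of $Y$ which is \emph{bi‑Lipschitz} on $E_\infty$ — here the crucial gain is that a coarse embedding, when transported to the spreading model via ultralimits over blocks sent to infinity, becomes Lipschitz in \emph{both} directions because the spreading‑model norm is built precisely from the large‑scale behavior of $(e_n)$ and $f$'s distortion is controlled by $\omega_f,\rho_f$ on a fixed scale that one can normalize. (3) Since $Z$ is finitely representable in $Y$ (an ultrapower of a superstable space is superstable), $Z$ is stable; now run Raynaud's type argument for the bi‑Lipschitz embedding $g$: define stable types on $E_\infty$ by $\sigma_a(x)=d_Z(g(x+a),g(x))$‑style limits, check that bi‑Lipschitz equivalence of $d_g$ to $\|\cdot\|_s$ plus stability of $Z$ yields a translation‑invariant \emph{stable} metric on $E_\infty$ equivalent to its norm, and deduce via the Krivine–Maurey/Raynaud machinery an $\ell_p$‑type on $E_\infty$. (4) Conclude that $E_\infty$ — hence the spreading model of $(e_n)$ — contains, and in fact is, up to the spreading‑model identification, $\ell_p$ for some $p\in[1,\infty)$; spreading models built from a single basic sequence with a symmetric basis are determined on finitely supported vectors, so ``contains $\ell_p$ almost isometrically as a spreading model'' upgrades to ``the spreading model is isomorphic to $\ell_p$''.

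The main obstacle I expect is step (2): making precise the passage from a merely coarse $f$ on $X$ to a genuinely bi‑Lipschitz map on the spreading model. A coarse embedding gives no control near $0$ and only an inequality $\rho_f(t)\to\infty$ at infinity, so the ultralimit construction must be arranged so that the spreading‑model vectors correspond to \emph{fixed‑length} increments along far‑out blocks — i.e.\ one should take ultralimits of $f$ evaluated at translates by $\sum_{n\in E}\lambda_n e_n$ with $E$ a block of bounded cardinality placed arbitrarily far out, and average/normalize so that distances are comparable to a single fixed value of $t$; then $\omega_f$ and $\rho_f$ at that one scale $t$ give the two‑sided Lipschitz bounds. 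A secondary difficulty is ensuring stability actually transfers through the double ultralimit defining the spreading model — one must interleave the spreading‑model ultrafilters with the ultrafilters witnessing stability carefully, but since stability is itself an ultrafilter‑exchange statement this is a bookkeeping matter once the right order of limits is chosen. Finally, one should verify that the spreading model one lands on is nontrivial (the basic sequence is not equivalent to a trivial one) so that the extracted $\ell_p$ is genuinely infinite‑dimensional; this is automatic since $X$ is infinite‑dimensional and any normalized basic sequence has a spreading model with an unconditional‑ish symmetric structure on which Krivine–Maurey applies.
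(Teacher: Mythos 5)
Your step (2) contains the essential gap, and it is the very difficulty the paper has to work around rather than a step that can be made to go through. If $f\colon X\to Y$ is merely coarse, then any distance function you build on the spreading model $E_\infty$ by taking iterated ultralimits of $\|f(x+\sum_{n\in E}\lambda_ne_n)-f(x)\|$ is still sandwiched between $\rho_f(\tn u-v\tn)$ and $\omega_f(\tn u-v\tn)$, because the spreading-model norm is itself the ultralimit of the $X$-norms of the same increments; the ultralimit does nothing to repair the mismatch between $\rho_f$ and $\omega_f$. A coarse embedding may have $\rho_f(t)\sim\log(1+t)$ while $\omega_f(t)\sim t$, so no choice of blocks ``far out'' produces two-sided Lipschitz bounds. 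Your suggested fix --- normalizing so that all distances sit at one fixed scale $t$ --- cannot work either, since to identify $E_\infty$ with $\ell_p$ you must control $\tn\sum_j a_j\tilde e_j\tn$ for arbitrary coefficient vectors, i.e.\ at all scales simultaneously. Once the bi-Lipschitz (or even uniform) upgrade is unavailable, you cannot ``run Raynaud's type argument'' as in your step (3): Raynaud's machinery uses uniform equivalence in an essential way (continuity of types at small scales, admissibility, the exact eigenvalue identities $\sigma*\alpha\cdot\sigma=(1+|\alpha|^p)^{1/p}\cdot\sigma$), and with only coarse equivalence every such identity acquires an additive error $\gamma=\inf_{t>0}\omega_{\mathrm{Id}}(t)>0$ that must be tracked and eventually absorbed by dilation --- this is precisely the content of the paper's Sections 5--7 (conic classes, the common-continuity-point lemma for Baire class 1 convolutions, coarse approximating sequences, and ``coarse $\ell_p$-types'').

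A second, related problem is that you extract an arbitrary normalized basic sequence $(e_n)$ at the outset and hope its spreading model is $\ell_p$. That cannot be right: in $\ell_2\oplus\ell_3$ (which is superstable) most basic sequences have spreading models that are not isomorphic to any $\ell_p$. The sequence whose spreading model turns out to be $\ell_p$ must be chosen at the end of the argument, as a defining sequence of a very particular symmetric admissible type $\psi=\phi*(-1)\cdot\phi$, where $\phi$ lies in a \emph{minimal} closed admissible conic class and is a common continuity point of all the convolution-dilation maps; minimality is what forces the approximate-eigenvalue relations $\widehat{T}_{\overline\alpha}\psi\approx\beta\cdot\psi$ that ultimately yield the $\ell_p$ (or $c_0$) behaviour, and a separate coarse-embedding obstruction (Kalton's theorems on $c_0$ and reflexive spaces) is needed to exclude the $c_0$ case. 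These selection and exclusion steps are absent from your outline and are not bookkeeping.
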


N. Kalton proved in \cite[Theorem 2.1]{Kalton2007}, that any stable metric space  embeds into some reflexive Banach space by a map which is both a uniform and a coarse embedding. In the same paper, Kalton   asked if the converse of this result also holds. Precisely,  the following is open (see \cite[Problem 6.1]{Kalton2007}).

\begin{problem}\label{ProblemReflexEmbStable}
Does every (separable) reflexive Banach space embed coarsely (resp. uniformly) into a stable space?
\end{problem}

By Raynaud's result, it is clear that there are separable reflexive spaces which do not uniformly embed into superstable spaces. However, to the best of our knowledge,  it was unknown whether every reflexive Banach space coarsely embeds into a superstable Banach space. As a corollary of Theorem \ref{main}, we obtain the following.

\begin{cor}\label{corThmPrincipal}
There are separable reflexive Banach spaces which do not coarsely embed into any superstable Banach space. 
\end{cor}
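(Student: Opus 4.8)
The plan is to derive Corollary \ref{corThmPrincipal} from Theorem \ref{main} by exhibiting a concrete separable reflexive Banach space, no basic sequence of which generates a spreading model isomorphic to $\ell_p$ for any $p\in[1,\infty)$. The space I would use is the original Tsirelson space $T^*$, realized as the dual of the (reflexive, $1$-unconditionally based) Figiel--Johnson space $T$; note that it must be $T^*$ and not $T$, since the canonical basis of $T$ already has an $\ell_1$ spreading model. By the contrapositive of Theorem \ref{main} it then suffices to check that no basic sequence in $T^*$ has a spreading model isomorphic to any $\ell_p$.

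The single ingredient that does the work is a standard estimate for the Tsirelson norm: if $(u_j)$ is a normalized block basis of the canonical basis $(e_j^*)$ of $T^*$, then $\|\sum_{j=N}^{N+k-1}u_j\|_{T^*}\le 2$ whenever $k\le\min\operatorname{supp}(u_N)$. This follows by duality from the defining inequality of the $T$-norm: if $z$ is in the unit ball of $T$ and $z_1<\cdots<z_n$ are successive blocks of $z$ with $n\le\min\operatorname{supp}(z_1)$, then $\sum_i\|z_i\|_T\le 2$, whence $\|\sum_i u_i\|_{T^*}=\sup_{\|z\|_T\le1}\sum_i\langle u_i,z_i\rangle\le 2\max_i\|u_i\|_{T^*}$. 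Consequently any spreading model $(\tilde e_j)$ generated by a normalized block basis of $T^*$ satisfies $\|\sum_{j=1}^k\tilde e_j\|\le 2$ for all $k$; since block bases of a $1$-unconditional basis are $1$-unconditional and spreading models of unconditional sequences are unconditional, $(\tilde e_j)$ is then $2$-equivalent to the unit vector basis of $c_0$, hence not isomorphic to $\ell_p$ for any finite $p$. The same computation applied to a semi-normalized block basis gives a uniform bound on $\|\sum_{j=N}^{N+k-1}u_j\|_{T^*}$ as well.

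To pass from block bases to arbitrary basic sequences I would use reflexivity. Let $(x_n)$ be a semi-normalized basic sequence in $T^*$ generating a spreading model $(\tilde e_n)$; after passing to a subsequence, $(x_n)$ converges weakly to some $x$. If $x=0$, then $(x_n)$ is weakly null, so by the Bessaga--Pe\l czy\'nski gliding-hump argument it has a subsequence arbitrarily close to a block basis of $(e_j^*)$, and the previous paragraph identifies its spreading model with $c_0$. If $x\ne 0$, then on one hand $\|\sum_{i=1}^k x_{n_i}\|\ge k\|x\|-\|\sum_{i=1}^k(x-x_{n_i})\|$, and $(x-x_{n_i})_i$ is weakly null and semi-normalized (it cannot tend to $0$ in norm, else the spreading model is one-dimensional), so the subtracted term stays bounded by the block-basis estimate and $\|\sum_{i=1}^k\tilde e_i\|\ge ck$ for some $c>0$; on the other hand $\|\sum_{i=1}^k(-1)^i x_{n_i}\|\le|\sum_{i=1}^k(-1)^i|\,\|x\|+\|\sum_{i=1}^k(-1)^i(x-x_{n_i})\|$, which stays bounded for the same reason, so $\|\sum_{i=1}^k(-1)^i\tilde e_i\|$ is bounded in $k$. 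A basis along which $\|\sum_{i=1}^k\tilde e_i\|$ grows linearly while $\|\sum_{i=1}^k(-1)^i\tilde e_i\|$ stays bounded cannot be equivalent to the unit vector basis of any $\ell_p$, $p\in[1,\infty)$. Since these cases are exhaustive, no spreading model of $T^*$ is isomorphic to an $\ell_p$, and Corollary \ref{corThmPrincipal} follows.

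The genuinely essential input is the single norm estimate above, which is classical; the part requiring care is the reduction in the last paragraph, where one must check that normalizing and the gliding-hump perturbation do not change the isomorphism type of the spreading model and must dispose of the degenerate cases (basic sequences not bounded below, or weakly convergent with nonzero limit). Alternatively, one may simply quote the known description of the spreading models of Tsirelson-type spaces from the literature and observe that none is isomorphic to an $\ell_p$.
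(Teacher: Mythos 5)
Your proposal is correct and follows the same route as the paper: apply the contrapositive of Theorem \ref{main} to the original Tsirelson space $T^{*}$ (correctly distinguished from the Figiel--Johnson space $T$, whose canonical basis already generates an $\ell_1$ spreading model). The only real difference is that the paper simply cites the classical fact that the original Tsirelson space has no spreading model isomorphic to $\ell_p$, whereas you prove it from the admissibility estimate $\|\sum_{j=N}^{N+k-1}u_j\|_{T^{*}}\leq 2$ for $k\leq \min\operatorname{supp}(u_N)$ together with a weak-limit case analysis; that argument is sound. One fine point: in your $x\neq 0$ case you conclude only that the fundamental sequence $(\tilde e_i)$ is not equivalent to the unit vector basis of any $\ell_p$, while Theorem \ref{main} as stated requires that the spreading model not be \emph{isomorphic} to $\ell_p$ as a Banach space; this is repaired in one line by observing that in that case $\tilde e_i=x+f_i$, where $(f_i)$ is the ($c_0$-equivalent) spreading model of the weakly null sequence $(x_{n_i}-x)$, so the spreading model embeds into $\mathbb{R}\oplus c_0$ and hence, since every infinite-dimensional closed subspace of $c_0$ contains a copy of $c_0$, cannot be isomorphic to $\ell_p$ for any $p\in[1,\infty)$.
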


We now describe the organization of this paper. In Section \ref{SectionBackground}, we give all the background needed for these notes. Also in Subsection \ref{SubsectionBaire}, we prove Lemma \ref{bairelemma2}, which will be essential in the proof of our main theorem. In Section \ref{sectioninvariant},  we show that if $\varphi\colon X\to Y$ is a coarse map, then, by substituting $Y$ with an ultrapower of $\ell_1(Y)$, we may assume that $\|\varphi(x)-\varphi(y)\|=\|\varphi(x-y)\|$, for all $x,y\in X$ (see Theorem \ref{embTOultra}). This result allows us to obtain a translation-invariant (pseudo)metric $d$ on $X$ such that $\rho_\varphi (t)\leq \rho_\mathrm{Id}(t)$ and $\omega_\mathrm{Id}(t)\leq \omega_\varphi (t)$ for all $t\geq 0$, where $\mathrm{Id}\colon (X,\|\cdot\|)\to (X,d)$ is the identity map. This will play a fundamental role in our definition of the type space. Still in Section \ref{sectioninvariant}, we discuss the concepts of some weaker kinds of nonlinear embeddings (i.e., weakenings of uniform and coarse embeddings) and give a condition for the existence of a uniform embedding of the unit ball of a Banach space into a superstable Banach space (see Theorem \ref{ThmBall}). We finish this section with an open problem regarding these weaker notions of nonlinear embeddings (see Problem \ref{ProbEmbEq}).

In Section \ref{SectionType}, we define the type space which we will deal with in these notes, define the operations of dilation and convolution in this type space, and prove some basic properties of it. Section \ref{SectionConicClass}, Section \ref{SectionModel} and Section \ref{SectionCoarseApp} are quite technical and will provide us with all the tools needed to prove Theorem \ref{main}. Finally, in Section \ref{SectionCoarselp}, we conclude the proof of Theorem \ref{main}.

\section{Preliminaries.}\label{SectionBackground}
 
In these notes, we let $\N=\{n\}_{n=1}^\infty$ and $\N_0=\{0\}\cup \N$. The Banach space notation used in these notes is standard, and we refer the reader to \cite{AK} for more on that. For instance,  we denote the closed unit ball of a Banach space $X$ by $B_X$. Also, given $p\in [1,\infty)$ and $\overline{x}=(x_i)_{i=1}^N\in\R^{<\N}$, we define $\|\bar{x}\|_p=(\sum_{i=1}^N|x_i|^p)^{1/p}$ and $\|\bar{x}\|_\infty=\max\{|x_i|\mid 1\leq i\leq N\}$.

We define stability for metric spaces and superstability for Banach spaces as in Section \ref{SectionIntro}. By \cite[Theorem II.1]{KrivineMaurey1981} and \cite[Theorem 0.1]{Raynaud1983}, both stability and superstability are closed under taking $\ell_p$-sums, for $p\in[1,\infty)$. Precisely, given $p\in [1,\infty)$, if $X$ is stable (resp. superstable) Banach space, then $\ell_p(X)$ is also stable (resp. superstable). We will be using this property without mention in these notes.  In particular, $\ell_p$ is superstable for every $p\in[1,\infty)$.  Note however that $c_0$ is not even coarsely or uniformly embeddable into a stable metric space (see \cite{Kalton2007}).

We say that  $(M,d)$ is a \emph{pseudometric space} if $d\colon X\times X\to \R_+$ is a \emph{pseudometric}, i.e., if $d$ is a symmetric map satisfying the triangular inequality. We define stability for pseudometric spaces analogously to stability for metric spaces.  Given pseudometric spaces $(M,d)$, $(N,\partial)$ and a map $f\colon M\to N$, we define $\omega_f$ and $\rho_f$ by the formulas given in Equation \ref{omega} and Equation \ref{rho}, and define uniform and coarse embeddings analogously to the definitions in Section \ref{SectionIntro}. We say that $f$ is a \emph{coarse equivalence} if $f$ is a coarse embedding and $f$ is cobounded, i.e., $\sup_{y\in N}\partial(y,f(M))<\infty$.
Two pseudometrics $d$ and $\partial$ defined for the same set $M$ are called \emph{coarsely equivalent} if the identity map $\mathrm{Id}\colon (M,d)\to (M,\partial)$ is a coarse equivalence.

\subsection{Spreading models.}\label{SubsectionSpreading}
 
Let $X$ be a Banach space and $(x_n)_{n=1}^\infty$ be a bounded sequence without Cauchy subsequences, and let $\mathcal{U}$ be a nonprincipal  ultrafilter on $\N$. Then there exists a Banach space $(S,\tn\cdot\tn)$ containing $X$ and a sequence $(\zeta_n)_{n=1}^\infty$ in $S$ which is linearly independent of $X$ such that, for all $y\in X$, and all $\alpha_1,\ldots,\alpha_k\in \R$, we have 

\[\TN y+\sum_{j=1}^k\alpha_j\zeta_j \TN =\lim_{n_1,\mathcal{U}}\ldots\lim_{n_k,\mathcal{U}} \Big\|y+\sum_{j=1}^k\alpha_jx_{n_j} \Big\|.\]\hfill

\noindent Without loss of generality, $S=X\oplus \overline{\text{span}}\{\zeta_n\mid n\in\N\}$ (see \cite{G-D}, Chapter 2, Section 2, for a proof of this fact). The space $\overline{\text{span}}\{\zeta_n\mid n\in\N\}$ is called a \emph{spreading model of $(x_n)_{n=1}^\infty$} and the sequence $(\zeta_n)_{n=1}^\infty$ is called the \emph{fundamental sequence} of $\overline{\text{span}}\{\zeta_n\mid n\in\N\}$. If $X$ is separable, then by going to a subsequence of $(x_n)_{n=1}^\infty$ if necessary, we may assume that for every $\epsilon>0$, there is $N\in \mathbb{N}$ such that 

\[
\Bigg|\TN y+\sum_{j=1}^k\alpha_j\zeta_j \TN -\Big\|y+\sum_{j=1}^k\alpha_jx_{n_j} \Big\|\Bigg|<\epsilon
\]\hfill

\noindent  whenever $N\leq n_1<\dots <n_k$.  A fundamental sequence $(\zeta_n)_{n=1}^\infty$ of a spreading model is \emph{$1$-spreading}, i.e., $(\zeta_n)_{n=1}^\infty$ is $1$-equivalent to all of its subsequences. Also, the sequence $(\xi_n)_{n=1}^\infty$ is $1$-sign unconditional, where $\xi_n=\zeta_{2n-1}-\zeta_{2n}$, for all $n\in\N$ (see \cite[Proposition II.3.3]{G-D}). We refer to \cite{ArgyrosTodorcevic}, \cite{BeauzamyLepreste}, and \cite{G-D} for the theory of spreading models.

\subsection{Ultrapowers.}\label{SubsectionUltra} Let $X$ be a Banach space, $I$ be an index set, and $\mathcal{U}$  be a nonprincipal ultrafilter on  $I$. We define

\[X^I/\mathcal{U}=\Big\{(x_i)_{i\in I}\in X^I \mid \sup_{i\in I}\|x_i\|<\infty\Big\}/\sim ,\]\hfill

\noindent where $(x_i)_{i\in I}\sim (y_i)_{i\in I}$ if $\lim_{i, \mathcal{U}}\|x_i-y_i\|=0$. $X^I/\mathcal{U}$ is a Banach space with norm $\|x\|=\lim_{i,\mathcal{U}}\|x_i\|$, where $(x_i)_{i\in I}$ is a representative of the class $x\in X^I/\mathcal{U}$. By abuse of notation, we will not distinguish between $(x_i)_{i\in I}$ and its equivalence class. The space $X^I/\mathcal{U}$ is called an \emph{ultrapower} of $X$. 

Every ultrapower $X^I/\mathcal{U}$ of a Banach space $X$ is finitely representable in $X$ (see \cite[Proposition 11.1.12(i)]{AK}). On the other hand, if a separable Banach space $Y$ is finitely representable in $X$, then $Y$ is isometrically isomorphically embeddable into some ultrapower of $X$ (see \cite[Proposition 11.1.12(ii)]{AK}). Therefore, a Banach space $X$ is superstable if and only if all of its ultrapowers are stable.

\subsection{Baire class 1 functions.}\label{SubsectionBaire}

Let $X$ and $Y$ be metrizable topological spaces.
Recall that a subset of a topological space is called $F_\sigma$ if it is the countable union of closed sets, is called $G_\delta$ if it is the countable intersection of open sets, and is called \emph{comeager} if it is the countable intersection of sets with dense interiors.
A function $f\colon X\to Y$ is called \emph{Baire class 1} if the inverse image of any open subset of $Y$ under $f$ is an $F_\sigma$ subset of $X$.  If $Y$ is separable, and $f$ is Baire class 1, then the set of points of continuity for $f$ is a comeager $G_\delta$ subset of $X$.
If $Y$ is separable and $(f_n\colon X\to Y)_{n=1}^\infty$ is a sequence of Baire class 1 functions, then $(f_n)_{n=1}^\infty\colon X\to Y^\mathbb{N}$ is a Baire class 1 function.
The pointwise limit of a sequence of continuous functions from $X$ to $Y$ is a Baire class 1 function.
The restriction of a Baire class 1 function is a Baire class 1 function.
For proofs of these facts and more info about Baire class 1 functions, see \cite{Kechris1995} and \cite{Kuratowski1966}.

\begin{lemma}
\label{bairelemma}
Let $X$ be a metrizable $\sigma$-compact topological space, $Y$ a topological space, and let $f\colon X\times Y\to \mathbb{R}$ be separately continuous.  Given a metric $d$ inducing the topology of $X$ and a countable family $\mathcal{K}$ of compact subsets of $X$ such that $X=\bigcup_{K\in \mathcal{K}}K$;
if there is $\delta>0$ such that for each $x\in X$, $B_\delta (x)\cap K\neq \emptyset$ for only finitely many $K\in \mathcal{K}$, then $f$ is the pointwise limit of a sequence of continuous functions.   
\end{lemma}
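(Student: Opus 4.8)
The plan is to build the approximating continuous functions explicitly out of the data $(d,\mathcal{K},\delta)$ by means of a countable, locally finite partition of unity made of small bumps, and then to obtain pointwise convergence using only that $f$ is separately continuous; since the conclusion asks exactly for $f$ to be a pointwise limit of continuous functions, no further abstraction is needed. First I would fix $n\in\N$ with $2/n<\delta$ (the finitely many remaining values of $n$ are irrelevant to a pointwise limit and may be defined arbitrarily). Writing $\mathcal{K}=\{K_m\}_{m=1}^\infty$ and using compactness of each $K_m$, I would choose a finite $(1/2n)$-dense subset $F_m^n\subseteq K_m$ and set $\mathcal{Z}_n=\bigcup_m F_m^n$, a countable subset of $X$. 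For $z\in\mathcal{Z}_n$ let $\psi_z^n(x)=\max\{0,\,1/n-d(x,z)\}$; this is continuous and supported in $B_{1/n}(z)$.

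Next I would verify that $\{B_{1/n}(z):z\in\mathcal{Z}_n\}$ is a locally finite open cover of $X$. It covers $X$ because every $x\in X$ lies in some $K_m$ and hence within $1/2n<1/n$ of a point of $F_m^n$; and it is locally finite because, given $x$, the hypothesis yields a finite subfamily $\mathcal{G}\subseteq\mathcal{K}$ with $B_\delta(x)\cap K=\emptyset$ for $K\notin\mathcal{G}$, so that whenever $B_{1/n}(z)\cap B_{1/n}(x)\neq\emptyset$ with $z\in F_m^n$ one gets $d(x,z)<2/n<\delta$, forcing $K_m\in\mathcal{G}$; as each $F_m^n$ is finite, the neighborhood $B_{1/n}(x)$ of $x$ meets only finitely many of the balls. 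Therefore $\Psi^n:=\sum_{z\in\mathcal{Z}_n}\psi_z^n$ is continuous and strictly positive on $X$, the functions $\phi_z^n:=\psi_z^n/\Psi^n$ form a continuous partition of unity with $\phi_z^n$ supported in $B_{1/n}(z)$ and $\sum_z\phi_z^n\equiv 1$ as a locally finite sum. I would then define
\[
f_n(x,y)=\sum_{z\in\mathcal{Z}_n}\phi_z^n(x)\,f(z,y),
\]
which near any point of $X\times Y$ is a \emph{finite} sum of maps $(x,y)\mapsto\phi_z^n(x)f(z,y)$; each such map is continuous on $X\times Y$ because $\phi_z^n$ is continuous on $X$ and $f(z,\cdot)$ is continuous on $Y$ by separate continuity, so $f_n$ is continuous. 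Finally, for fixed $(x,y)$ and $\eps>0$, separate continuity of $f(\cdot,y)$ gives $\eta>0$ with $|f(x',y)-f(x,y)|<\eps$ for $d(x,x')<\eta$; once $1/n<\eta$, every $z$ with $\phi_z^n(x)>0$ satisfies $d(x,z)<1/n<\eta$, so $|f_n(x,y)-f(x,y)|\le\sum_z\phi_z^n(x)\,|f(z,y)-f(x,y)|<\eps$. Thus $f_n\to f$ pointwise, as desired.

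I expect the genuine content of the argument, and the only place the hypotheses are really used, to be the simultaneous construction of a partition of unity that is both countable and locally finite: countability comes from $\sigma$-compactness, i.e. from $\mathcal{K}$ being countable together with the finiteness of nets on each compact $K\in\mathcal{K}$, while local finiteness is precisely what the condition on $\delta$ buys — a small ball centered at a point of $K\in\mathcal{K}$ can meet a fixed small neighborhood of $x$ only when $K$ itself meets $B_\delta(x)$. Everything else (continuity of the $f_n$, which rests on local finiteness plus separate continuity, and pointwise convergence, which rests on separate continuity plus the shrinking supports) is routine. One could alternatively invoke paracompactness of the metrizable space $X$ to obtain a locally finite partition of unity abstractly, but the explicit construction above keeps the proof self-contained and makes transparent how $\sigma$-compactness and the $\delta$-condition enter.
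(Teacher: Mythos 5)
Your proof is correct and follows essentially the same route as the paper's: a locally finite partition of unity built from small metric bumps centered at a countable dense set, with $f_n$ defined as the corresponding weighted average of slices $f(z,\cdot)$, continuity coming from local finiteness plus separate continuity, and pointwise convergence from continuity of $f(\cdot,y)$. The only cosmetic difference is that you construct the dense set explicitly as a union of finite nets on the compacta, whereas the paper simply asserts the existence of a $\tfrac{\delta}{2(n+1)}$-dense set meeting each $K\in\mathcal{K}$ in finitely many points.
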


\begin{proof}
For each $n\in \mathbb{N}$, let $\left\{x_{n,i}\right\}_{i=1}^\infty$ be a $\frac{\delta}{2(n+1)}$-dense set in $(X,d)$ such that $\left|\left\{x_{n,i}\right\}_{i=1}^\infty \cap  K\right|<\infty$ for every $K\in \mathcal{K}$.
For each $n,i\in \mathbb{N}$, define $g_{n,i}\colon X\to \mathbb{R}_+$ by $g_{n,i}(x)=\max \left\{\frac{\delta}{n+1}-d\left(x_{n,i},x\right),0\right\}$ for every $x\in X$.
Note that $g_{n,i}$ is continuous and given $x\in X$,  $g_{n,i}\restriction_{B_{\delta/2}(x)}$ is a nonzero function for some but only finitely many $i\in \mathbb{N}$.
Thus the function $h_{n,i}:=\frac{g_{n,i}}{\sum_{j=1}^\infty g_{n,j}}$ is well-defined and continuous.
For each $n\in \mathbb{N}$, define $f_n\colon X\times Y\to \mathbb{R}$ by 

\[f_n(x,y)=\sum_{i=1}^\infty f\left(x_{n,i},y\right)h_{n,i}(x)\]\hfill

\noindent for every $(x,y)\in X\times Y$ and note that $f_n$ is itself continuous by the separate continuity of $f$ and the observation on $g_{n,i}\restriction_{B_{\delta/2}(x)}$.
The sequence $(f_n)_{n=1}^\infty$ converges pointwise to $f$.
Indeed, take any $(x,y) \in X\times Y$ and any $\varepsilon>0$.
Let $N\in \mathbb{N}$ be such that $|f(x,y)-f(x',y)|<\varepsilon$ when $d(x, x')<\frac{\delta}{N}$.
Then, for $n\geq N$,

\begin{align*}
|f(x,y)-f_n(x,y)|&=\Big|\sum_{i=1}^\infty\left(f(x,y)-f\left(x_{n,i},y\right)\right)h_{n,i}(x)\Big|\\
&\leq \sum_{i=1}^\infty\left|f(x,y)-f\left(x_{n,i},y\right)\right|h_{n,i}(x)\\
&<\varepsilon \cdot\sum_{i=1}^\infty h_{n,i}(x)\\
&=\varepsilon.
\end{align*}
\end{proof}

Given a set $X$ and a family $\mathcal{F}$ of functions from $X\times X$ to $X$, define the sequence $\left(\mathcal{F}^{[k]}\right)_{k=1}^\infty$ of subsets of $X^X$ recursively by

\begin{align*}
\mathcal{F}^{[0]}&=\{x\mapsto x\}\\
\mathcal{F}^{[k+1]}&=\big\{x\mapsto f(x,g(x))\ |\ f\in \mathcal{F}, g\in \mathcal{F}^{[k]}\big\}.
\end{align*}\hfill

The following lemma will give us Lemma \ref{commonpoint} below, which is essential for the proof of Theorem \ref{main}.

\begin{lemma}
\label{bairelemma2}
Let $X$ be a separable metric space and $\mathcal{F}$ a countable family of Baire class 1 functions from $X\times X$ to $X$.  There is a comeager $G_{\delta}$ subset $E$ of $X$ such that $g$ is continuous on $E$ for all $g\in \bigcup_{k=1}^\infty \mathcal{F}^{[k]}$.
\end{lemma}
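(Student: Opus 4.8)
The plan is to bootstrap from the quoted fact that a Baire class $1$ function from a metrizable space to a separable metrizable space is continuous on a comeager $G_\delta$ set, applied countably many times. First note that $\bigcup_{k=1}^\infty\mathcal{F}^{[k]}$ is countable: by an obvious induction $\mathcal{F}^{[k+1]}$ is the image of the countable set $\mathcal{F}\times\mathcal{F}^{[k]}$ under $(f,g)\mapsto\big(x\mapsto f(x,g(x))\big)$, so each $\mathcal{F}^{[k]}$ is countable and hence so is their union. It therefore suffices to produce a single comeager $G_\delta$ set on which all of these countably many maps are simultaneously continuous.

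The naive attempt --- intersect the comeager continuity sets of the individual $g\in\bigcup_k\mathcal{F}^{[k]}$ --- does not work, because for $k\ge 2$ the members of $\mathcal{F}^{[k]}$ need not be Baire class $1$ (a composition of two Baire class $1$ maps need not be Baire class $1$), so the quoted fact does not apply to them. This is the main obstacle, and the way around it is the elementary observation that composing a Baire class $1$ map with a \emph{continuous} map stays within Baire class $1$: if $\phi\colon Z\to X\times X$ is continuous and $f\colon X\times X\to X$ is Baire class $1$, then for open $U\subseteq X$ the set $(f\circ\phi)^{-1}(U)=\phi^{-1}\big(f^{-1}(U)\big)$ is the preimage under a continuous map of an $F_\sigma$ set, hence $F_\sigma$. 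Consequently, if $g\in\mathcal{F}^{[k]}$ happens to be continuous on a subspace $E\subseteq X$, then $x\mapsto(x,g(x))$ is continuous from $E$ into $X\times X$, so for every $f\in\mathcal{F}$ the map $x\mapsto f(x,g(x))$ is Baire class $1$ on $E$; that is, every member of $\mathcal{F}^{[k+1]}$, restricted to $E$, is Baire class $1$.

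With this in hand I would construct a decreasing sequence $X=E_0\supseteq E_1\supseteq E_2\supseteq\cdots$ of comeager $G_\delta$ subsets of $X$ such that every $g$ in $\bigcup_{j\le k}\mathcal{F}^{[j]}$ is continuous on $E_k$. Given $E_k$, the previous paragraph shows every $g\in\mathcal{F}^{[k+1]}$ is Baire class $1$ on the separable metric space $E_k$ with values in the separable space $X$; applying the quoted fact to each of these countably many maps and intersecting the resulting comeager $G_\delta$ sets produces a comeager $G_\delta$ subset $E_{k+1}$ of $E_k$ on which all members of $\mathcal{F}^{[k+1]}$ --- and, by restriction, all members of $\mathcal{F}^{[j]}$ for $j\le k$ --- are continuous. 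Since a comeager $G_\delta$ subset of a comeager $G_\delta$ subset of $X$ is again comeager $G_\delta$ in $X$, the induction continues. Finally $E:=\bigcap_{k\ge 1}E_k$ is a comeager $G_\delta$ subset of $X$, and any $g\in\bigcup_k\mathcal{F}^{[k]}$ lies in some $\mathcal{F}^{[j]}$ and hence is continuous on $E_j\supseteq E$.

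The routine points I would record but not dwell on are: countability of the family; that a countable intersection of comeager $G_\delta$ sets is comeager $G_\delta$, and that a $G_\delta$ subset of a $G_\delta$ is a $G_\delta$ while a comeager subset of a comeager set is comeager; and that a map into $X\times X$ is continuous (resp. Baire class $1$) exactly when both coordinates are. The entire content of the proof is the stratified construction forced by the failure of Baire class $1$ to be closed under composition.
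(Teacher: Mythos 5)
Your proof is correct and follows essentially the same route as the paper's: an induction producing a decreasing chain of comeager $G_\delta$ sets $E_k$ on which every member of $\mathcal{F}^{[k]}$ is continuous, where the continuity of $g\in\mathcal{F}^{[k]}$ on $E_k$ is used to make each $x\mapsto f(x,g(x))$ Baire class $1$ relative to $E_k$ so that the continuity-point theorem can be applied again. The only difference is cosmetic and lies in the inductive step: you precompose $f$ with the continuous graph map $x\mapsto(x,g(x))$ on $E_k$, whereas the paper restricts $f$ to the graph $\Gamma_g$ and transports the resulting comeager $G_\delta$ set back to $E_k$ by checking that the first-coordinate projection $\Gamma_g\to E_k$ is open --- two equivalent views of the same step, with yours avoiding the open-map verification.
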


\begin{proof}
Certainly, $g$ is continuous on $E_0=X$ for  $g\in \mathcal{F}^{[0]}$.
Suppose $k\in \mathbb{N}_0$ is such that there is a comeager $G_\delta$ subset $E_k$ of $X$ such that $g$ is continuous on $E_k$ for all $g\in \mathcal{F}^{[k]}$.
For each $g\in \mathcal{F}^{[k]}$, let $\Gamma_g=\{(x,g(x))\ |\ x\in E_k\}$.
Since $\mathcal{F}$ is a countable family of Baire class 1 functions with separable codomain $X$, there is a comeager $G_\delta$ subset $F_g$ of $\Gamma_g$ such that $f\restriction_{\Gamma_g}$ is continuous on $F_g$ for all $f\in \mathcal{F}$.
Let $\pi\colon X\times X\to X$ be the first coordinate projection.
Consider $U=\Gamma_g\cap V\times W$, where $V, W$ are open subsets of $X$; and suppose $x\in \pi(U)$, so that $(x,g(x))\in U$.
As $W$ is open and $g(x)\in W$, there is $r_1>0$ such that $B_{r_1}(g(x))\subseteq W$.
Since $g$ is continuous on $E_k$, there is $r_2>0$ such that $g(B_{r_2}(x)\cap E_k)\subseteq B_{r_1}(g(x))$.
Thus $(V\cap B_{r_2}(x))\cap E_k$ is an open neighborhood of $x$ in $E_k$ contained in $\pi(U)$.
Since $x\in \pi(U)$ was arbitrary, $\pi(U)$ is open in $E_k$.
And $U$ was an arbitrary element in a basis for the topology on $\Gamma_g$, so $\pi(U)$ is open in $E_k$ whenever $U$ is open in $\Gamma_g$.
It follows easily that $\pi(F_g)$ is a comeager $G_\delta$ subset of $E_k$ since $F_g$ is a comeager $G_\delta$ subset of $\Gamma_g$.
Let $E_{k+1}=\bigcap_{g\in \mathcal{F}^{[k]}}\pi(F_g)$.
Since $\mathcal{F}^{[k]}$ is countable, $E_{k+1}$ is a comeager $G_\delta$ subset of $E_k$, and therefore also of $X$, since $E_k$ is a comeager $G_\delta$ subset of $X$.
Now take any $g\in \mathcal{F}^{[k+1]}$.
Then there is $f\in \mathcal{F}$ and $g'\in \mathcal{F}^{[k]}$ such that $g(x)=f(x,g'(x))$ for all $x\in X$.
And if $x\in E_{k+1}$, then by construction $x$ is a point of continuity for $g'$ and $(x,g'(x))$ is a point of continuity for $f\restriction_{\Gamma_{g'}}$.
Therefore $x$ is a point of continuity for $g$.
Thus, we have constructed a comeager $G_\delta$ subset $E_{k+1}$ of $E_k$ such that $g$ is continuous on $E_{k+1}$ for all $g\in \mathcal{F}^{[k+1]}$. 
And so we can recursively define such $E_k$ for all $k\in \mathbb{N}$.
The result follows by taking $E=\bigcap_{k=0}^\infty E_k$.
\end{proof}

\section{Making coarse maps ``invariant$"$.}\label{sectioninvariant}

In this section, we use Markov-Kakutani's fixed-point theorem to show that coarse embeddings may be modified and made more ``tamed$"$ if we allow ourselves to substitute its codomain by an ultrapower of the $\ell_1$-sum of the original space. Precisely, we have the following.

\begin{thm}\label{embTOultra}
Let $X$ and $Y$ be Banach spaces and  $f\colon X\to Y$ a coarse map. Then there exists a nonprincipal ultrafilter $\mathcal{U}$ on an index set $I$, and a map $F\colon X\to \ell_1(Y)^I/\mathcal{U}$, such that

\[\rho_f(\|x-y\|)\leq \|F(x)-F(y)\|\leq \omega_f(\|x-y\|),\]\hfill

\noindent and 

\begin{equation*}
\|F(x)-F(y)\|=\|F(x-y)\|,\ \ \text{ for all }\ \ x,y\in X.
\end{equation*}
\end{thm}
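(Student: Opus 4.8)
The plan is to realize a translation-invariant \emph{average} of a naturally associated family of perturbations of $f$, using the amenability of the additive group of $X$ regarded as a discrete group (this is where Markov--Kakutani enters, via the standard proof that abelian groups carry an invariant mean), and then to pass to an ultraproduct so that the average lands in an honest Banach space, kept finitely representable in $\ell_1(Y)$.

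First, for $x\in X$ consider the map $z\mapsto f(x+z)-f(z)$ from $X$ to $Y$. Since $f$ is coarse, $\|f(x+z)-f(z)\|\le\omega_f(\|x\|)$ for every $z\in X$, a bound independent of $z$; and $f(0+z)-f(z)=0$ for all $z$. Moreover, for $x,y,z\in X$ one has $\|(x+z)-(y+z)\|=\|x-y\|$, so $\rho_f(\|x-y\|)\le\|f(x+z)-f(y+z)\|\le\omega_f(\|x-y\|)$ for every $z$. Thus the displacement $z\mapsto f(x+z)-f(y+z)$ already has the correct two-sided modulus pointwise in $z$; the only feature missing is translation invariance, which we manufacture by averaging over $z$ against a suitable family of measures.

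Since $(X,+)$ is abelian it is amenable as a discrete group, so by Markov--Kakutani there is a translation-invariant mean on $\ell_\infty(X)$, equivalently (Reiter's / Day's condition) a net $(\mu_\alpha)_{\alpha\in I}$ of finitely supported probability measures on $X$ with $\|\delta_w\ast\mu_\alpha-\mu_\alpha\|_{\ell_1(X)}\to 0$ for every $w\in X$; here one may take $I=\{(S,\eps):S\subseteq X\text{ finite},\ \eps>0\}$ directed in the obvious way, with $\mu_{(S,\eps)}$ chosen to be $\eps$-invariant under every $w\in S$. Fix a nonprincipal ultrafilter $\mathcal{U}$ on $I$ containing every tail of this directed set. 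For $\alpha\in I$ enumerate $\mathrm{supp}(\mu_\alpha)=\{z_1,\dots,z_{n_\alpha}\}$ and define $F_\alpha\colon X\to\ell_1^{n_\alpha}(Y)\subseteq\ell_1(Y)$ by declaring the $i$-th coordinate of $F_\alpha(x)$ to be $\mu_\alpha(z_i)\,(f(x+z_i)-f(z_i))$. Then $\|F_\alpha(x)\|_{\ell_1(Y)}=\sum_z\mu_\alpha(z)\|f(x+z)-f(z)\|\le\omega_f(\|x\|)$, so $(F_\alpha(x))_{\alpha\in I}$ is a bounded family and $F(x):=(F_\alpha(x))_{\alpha\in I}$ defines an element of $\ell_1(Y)^I/\mathcal{U}$. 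Since $\|F_\alpha(x)-F_\alpha(y)\|_{\ell_1(Y)}=\sum_z\mu_\alpha(z)\|f(x+z)-f(y+z)\|$, and each summand is pinched between $\rho_f(\|x-y\|)$ and $\omega_f(\|x-y\|)$ with weights summing to $1$, passing to the ultralimit gives $\rho_f(\|x-y\|)\le\|F(x)-F(y)\|\le\omega_f(\|x-y\|)$.

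It remains to check the invariance identity. Because $f(0+z)-f(z)=0$ we get $F_\alpha(0)=0$, hence $F(0)=0$. For fixed $w\in X$, the substitution $z\mapsto z-w$ gives $\|F_\alpha(x+w)-F_\alpha(y+w)\|_{\ell_1(Y)}=\sum_z(\delta_w\ast\mu_\alpha)(z)\|f(x+z)-f(y+z)\|$, which differs from $\|F_\alpha(x)-F_\alpha(y)\|_{\ell_1(Y)}$ by at most $\|\delta_w\ast\mu_\alpha-\mu_\alpha\|_{\ell_1(X)}\cdot\omega_f(\|x-y\|)$; the latter tends to $0$ along $\mathcal{U}$, so $\|F(x+w)-F(y+w)\|=\|F(x)-F(y)\|$ for every $w\in X$. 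Taking $w=-y$ and using $F(0)=0$ yields $\|F(x)-F(y)\|=\|F(x-y)-F(0)\|=\|F(x-y)\|$, as required. The only step that is not a routine verification is the passage from an invariant mean (supplied by Markov--Kakutani) to the asymptotically invariant net of finitely supported probability measures; with that in hand, the ultraproduct over finite $\ell_1$-sums of $Y$ — which embed isometrically into $\ell_1(Y)$ — carries the construction through, and this is also precisely why the codomain can be kept finitely representable in $\ell_1(Y)$.
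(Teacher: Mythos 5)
Your proof is correct. It shares with the paper the same basic mechanism — average the translates $z\mapsto f(\cdot+z)-f(z)$ against finitely supported probability measures, realize each such average as a finite $\ell_1$-sum of elements of $Y$, and pass to an ultraproduct of $\ell_1(Y)$ so that the averaging becomes exact — but the way translation invariance is secured is genuinely different. The paper applies Markov--Kakutani directly to the compact convex set $\overline{\mathrm{conv}}\{\hat z(d)\mid z\in X\}$ inside $\prod_{(x,y)}[\rho_f(\|x-y\|),\omega_f(\|x-y\|)]$ with the pointwise topology, where $d(x,y)=\|f(x)-f(y)\|$ and $\hat z$ shifts both arguments; the fixed point $D$ is \emph{exactly} invariant, and one then merely writes $D$ as a pointwise ultralimit of convex combinations $\sum_j\alpha_{i,j}\hat z_{i,j}(d)$ — the associated measures need not themselves be asymptotically invariant in any norm. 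You instead use Markov--Kakutani one level up, to produce an invariant mean on $\ell_\infty(X)$, and then invoke Day's theorem (amenability implies Reiter's property $P_1$) to extract a net of finitely supported probability measures with $\|\delta_w\ast\mu_\alpha-\mu_\alpha\|_{\ell_1(X)}\to 0$, so that invariance holds only asymptotically and becomes exact in the ultralimit. Your error estimate $\bigl|\,\|F_\alpha(x+w)-F_\alpha(y+w)\|-\|F_\alpha(x)-F_\alpha(y)\|\,\bigr|\le\|\delta_w\ast\mu_\alpha-\mu_\alpha\|_{\ell_1(X)}\,\omega_f(\|x-y\|)$ is right, and the pinching of the weighted averages between $\rho_f$ and $\omega_f$ is identical in both arguments. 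The trade-off is that your route imports Day's theorem, whose standard proof requires a Mazur/Hahn--Banach separation step beyond Markov--Kakutani itself, whereas the paper's version is self-contained precisely because the fixed-point argument is run on the distance functions rather than on the measures; what your route buys is a cleaner conceptual statement (the construction works verbatim over any amenable group acting on $X$ by isometric translations), at the cost of that extra black box.
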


\begin{proof}
 Let 

\[C= \prod_{(x,y)\in X\times X} [\rho_f(\|x-y\|),\omega_f(\|x-y\|)].\]\hfill

\noindent We consider $C$ as a subset of $\mathbb{R}^{X\times X}$ with its pointwise convergence topology.  By the assumption that $f$ is coarse and Tychonoff's theorem, $C$ is compact.  We denote elements of $C$ by $D(x,y)$.  Let $d\colon X\times X\to \R$ be given by $d(x,y)= \|f(x)-f(y)\|$, for all $x,y\in X$. So, $d\in C$.

For each $z\in X$, define $\hat{z}\colon \mathbb{R}^{X\times X}\to \mathbb{R}^{X\times X}$ by letting $\hat{z}(g)(x,y)=g(x+z,y+z)$ for all $g\in \mathbb{R}^{X\times X}$ and all $x,y\in X$. Let $A=\overline{\mathrm{conv}}\{\hat{z}(d)\mid z\in X\}\subseteq \mathbb{R}^{X\times X}$. By the definition of the pointwise convergence topology on $\R^{X\times X}$, we have that $A\subseteq C$. The family $\{\hat{z}\restriction_A\}_{z\in X}$ is easily seen to be a commuting family of continuous, affine self-mappings of the compact convex subset $A$ of $\mathbb{R}^{X\times X}$.
Hence, by Markov-Kakutani's fixed-point theorem,  there exists $D\in A$ such that $\hat{z}(D)=D$ for all $z\in X$.
That is, $D(x+z,y+z)=D(x,y)$ for all $x,y,z\in X$.
Say $D=\lim_{i\in\mathcal{U}} D_i$, where $I$ is an index set,  $\mathcal{U}$ is some nonprincipal ultrafilter on $I$, and   $D_i\in \mathrm{conv}\{\hat{z}(d)\mid z\in X\}$, for all $i\in I$. For each $i\in I$, we have that  $D_i=\sum_{j=1}^{s(i)} \alpha_{i,j}\hat{z}_{i,j}(d)$, for some finite sequence  $(\alpha_{i,j})_{j=1}^{s(i)}$ of nonnegative real numbers such that $\sum_{j=1}^{s(i)}\alpha_{i,j}=1$, and some finite sequence $(z_{i,j})_{j=1}^{s(i)}$  in $X$.

For $y_1,\ldots, y_n\in Y$, we denote $(y_1,\ldots,y_n,0,0,\ldots)\in \ell_1(Y)$ by $\oplus_{j=1}^n y_j$. Consider the map

\begin{align*}
F=(F_i)_{i\in I}\colon  X&\longrightarrow \ell_1(Y)^I/\mathcal{U}\\
x&\longmapsto \Big( \bigoplus_{j=1}^{s(i)}\alpha_{i,j}\big(f(x+z_{i,j})-f(z_{i,j})\big)\Big)_{i\in I}.
\end{align*}\hfill

\noindent As $\sup_{i\in I}\|F_i(x)\|_{\ell_1(Y)}\leq \omega_f(\|x\|)$, for all $x\in X$, the map $F$ is well-defined.  By the definition of the norm on $\ell_1(Y)^I/\mathcal{U}$, we have that

\[\|F(x)-F(y)\|_{\ell_1(Y)^I/\mathcal{U}}=D(x,y),\]\hfill

\noindent  for all $x,y\in X$. Therefore, as $D(x,y)=D(x-y,0)$, for all $x,y\in X$, and $F(0)=0$, we are done. 
\end{proof}

\begin{cor}
\label{invmetric2}
Let $(X,\|\cdot\|)$ be a Banach space, and $Y$ be a superstable Banach space.  Let  $f\colon X\to Y$ be a coarse map. Then there exists a translation-invariant stable pseudometric $d$ on $X$ such that 

\[\rho_f(\|x-y\|)\leq d(x,y)\leq \omega_f(\|x-y\|), \ \text{ for all }\  x,y\in X.\]\hfill

\noindent In particular, if $X$ coarsely embeds into a superstable space, then  there exists a translation-invariant stable pseudometric $d$ on $X$ such that the identity map $\mathrm{Id}\colon (X,\|\cdot\|)\to (X,d)$ is a coarse equivalence.
\end{cor}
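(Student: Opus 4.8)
The plan is to read the corollary directly off Theorem~\ref{embTOultra}. Given the coarse map $f\colon X\to Y$, I would apply that theorem to obtain a nonprincipal ultrafilter $\mathcal{U}$ on an index set $I$ and a map $F\colon X\to\ell_1(Y)^I/\mathcal{U}$ with $\rho_f(\|x-y\|)\le\|F(x)-F(y)\|\le\omega_f(\|x-y\|)$ and $\|F(x)-F(y)\|=\|F(x-y)\|$ for all $x,y\in X$, and then set $d(x,y):=\|F(x)-F(y)\|$. Symmetry and the triangle inequality for $d$ are inherited from the norm of $\ell_1(Y)^I/\mathcal{U}$, so $d$ is a pseudometric (in general only a pseudometric, since $F$ need not be injective); translation invariance is immediate from $d(x+z,y+z)=\|F((x+z)-(y+z))\|=\|F(x-y)\|=d(x,y)$; and $\rho_f(\|x-y\|)\le d(x,y)\le\omega_f(\|x-y\|)$ is exactly the first displayed estimate of Theorem~\ref{embTOultra}.

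The only real point is stability of $d$. Here I would use that $Y$ superstable forces $\ell_1(Y)$ superstable (superstability is preserved under $\ell_p$-sums, as recalled in Section~\ref{SectionBackground}) and that every ultrapower of a superstable space is stable (Subsection~\ref{SubsectionUltra}); hence $Z:=\ell_1(Y)^I/\mathcal{U}$ is a stable metric space. Then $d$ is the pullback of the stable metric of $Z$ along $F$, and such a pullback is stable: if $(x_i)_i$ and $(y_j)_j$ are $d$-bounded sequences in $X$, then fixing $x_0\in X$ gives $\|F(x_i)\|\le\|F(x_0)\|+d(x_i,x_0)$, so $(F(x_i))_i$ is norm-bounded in $Z$, and likewise $(F(y_j))_j$; hence for all nonprincipal ultrafilters $\mathcal{U}_1,\mathcal{U}_2$ on $\N$,
\begin{align*}
\lim_{i,\mathcal{U}_1}\lim_{j,\mathcal{U}_2}d(x_i,y_j)&=\lim_{i,\mathcal{U}_1}\lim_{j,\mathcal{U}_2}\|F(x_i)-F(y_j)\|\\
&=\lim_{j,\mathcal{U}_2}\lim_{i,\mathcal{U}_1}\|F(x_i)-F(y_j)\|=\lim_{j,\mathcal{U}_2}\lim_{i,\mathcal{U}_1}d(x_i,y_j)
\end{align*}
by stability of $Z$. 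This establishes the first assertion.

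For the ``in particular'' statement, assume $f$ is a coarse embedding into a superstable $Y$, so $\omega_f(t)<\infty$ for all $t\ge0$ and $\rho_f(t)\to\infty$ as $t\to\infty$; note also that $\omega_f$ and $\rho_f$ are nondecreasing (the constraint set in the supremum defining $\omega_f$ grows with $t$, while that in the infimum defining $\rho_f$ shrinks). With $d$ as above, the identity map $\mathrm{Id}\colon(X,\|\cdot\|)\to(X,d)$ then satisfies $\omega_{\mathrm{Id}}(t)\le\omega_f(t)<\infty$ and, using monotonicity of $\rho_f$, $\rho_{\mathrm{Id}}(t)\ge\rho_f(t)\to\infty$; thus $\mathrm{Id}$ is a coarse embedding, and it is trivially cobounded because it is surjective, so it is a coarse equivalence.

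Essentially all of the content sits in Theorem~\ref{embTOultra}; the only mild subtlety in the present argument is the pullback step, where one must be careful that $d$-boundedness (not norm-boundedness) of $(x_i)_i$ and $(y_j)_j$ already forces norm-boundedness of their images under $F$ in $Z$, which is what makes the stability of $Z$ applicable.
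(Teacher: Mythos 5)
Your proposal is correct and follows essentially the same route as the paper: apply Theorem~\ref{embTOultra}, set $d(x,y)=\|F(x)-F(y)\|$, and deduce stability of $d$ from the stability of the ultrapower $\ell_1(Y)^I/\mathcal{U}$ (the paper states this last step in one line, while you spell out the pullback argument, correctly noting that $d$-boundedness already forces boundedness of the images). Your verification of the ``in particular'' clause via the monotonicity of $\omega_f$ and $\rho_f$ is also fine.
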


\begin{proof}
Let  $F\colon X\to \ell_1(Y)^I/\mathcal{U}$ be obtained from  Theorem \ref{embTOultra} applied to $f$. Define a map $d\colon X\times X\to \mathbb{R}_+$  by setting $d(x,y)=\|F(x)-F(y)\|$ for all $x,y\in X$.  It can easily be seen that $d$ is a translation-invariant pseudometric on $X$ and that $\rho_f(\|x-y\|)\leq d(x,y)\leq \omega_f(\|x-y\|)$ for all $x,y\in X$. Also, as $F$ is coarse and $\ell_1(Y)^I/\mathcal{U}$ is stable, it follows that  $d$ is a stable metric.
\end{proof}

The corollary above is the analogous version of \cite[Theorem 0.2]{Raynaud1983} to our setting. In \cite[Theorem 0.2]{Raynaud1983}, Raynaud proved that if a Banach space $X$ uniformly embeds into a superstable Banach space $Y$, then $X$ has a translation-invariant stable metric which is uniformly equivalent to $X$'s norm. However, Raynaud's proof relies on an averaging process which relies on the emebedding  $f\colon X\to Y$  being uniformly continuous. By using Markov-Kakutani's fixed-point theorem, we are able to overcome that. Clearly, Corollary \ref{invmetric2} implies \cite[Theorem 0.2]{Raynaud1983}.

\begin{rem}\label{metricmetric}
Although this will not be needed for the main result in these notes, Corollary \ref{invmetric2} can actually be improved to show the existence of a coarsely equivalent translation-invariant stable \emph{metric} on $X$. Indeed, it has been shown by the first named author (see \cite[Theorem 1.6]{BragaEmb}) that if $X$ and $Y$ are Banach spaces and $f\colon X\to Y$ is a coarse embedding, then there is a coarse embedding $\hat{f}\colon X\to \ell_1(Y)$ with uniformly continuous inverse (meaning $\rho_{\hat{f}}(t)>0$ whenever $t>0$).
Thus, the same proof as in Corollary \ref{invmetric2} with $\ell_1(Y)$ replacing $Y$ and $\hat{f}$ replacing $f$ will yield that $\text{Id}\colon (X,\|\cdot\|)\to (X,d)$ is a coarse embedding with uniformly continuous inverse. In particular, $d$ is a metric.
\end{rem}

Although Kalton proved that there exist separable Banach spaces which are coarsely equivalent but not uniformly equivalent (see \cite[Theorem 8.9]{Kalton2012}), the same problem remains open for embeddings. Precisely, given Banach spaces $X$ and $Y$, does $X$ coarsely embed into  $Y$ if and only if $X$ uniformly embeds into $Y$?  It is not even known whether the coarse embeddability of $X$ into $Y$ would be strong enough to give us that $B_X$ uniformly embeds into $Y$. In the remainder of this section, we use Theorem \ref{embTOultra} to prove a result on the uniform embeddability of the ball of a given Banach space into a superstable space (Theorem \ref{ThmBall}).

 Let $X$ and $Y$ be Banach spaces and consider a map $f\colon X\to Y$. For each $t\geq 0$, we define the \emph{exact compression modulus of $f$} as

\[\overline{\rho}_f(t)=\inf\{\|f(x)-f(y)\|\mid \|x-y\|=t\}.\]\hfill

\noindent The map $f$ is called \emph{almost uncollapsed} if there exists some $t>0$ such that $\overline{\rho}_f(t)>0$. This is equivalent to

\[\sup_{t>0}\inf_{\|x-y\|=t}\|f(x)-f(y)\|>0.\]\hfill

\noindent We say that $f\colon X\to Y$ is \emph{solvent} if for each $n\in\N$, there exists $R>0$, such that 

\[\|x-y\|\in [R,R+n] \ \ \text{ implies }\ \  \|f(x)-f(y)\|>n,\]\hfill

\noindent for all $x,y\in X$. By \cite[Lemma 60]{Rosendal2016}, a coarse map $f\colon X\to Y$ is solvent if and only if $\sup_{t>0}\overline{\rho}_f(t)=\infty$. It is clear that the condition of a map $f\colon X\to Y$ being almost uncollapsed is a weakening of $f$ having a uniformly continuous inverse, and the condition of $f$ being solvent is a weakening of $f$ being expanding (see Problem \ref{ProbEmbEq} below for more on that). We refer to \cite{Rosendal2016} and \cite{BragaWeak} for more on almost uncollapsed and solvent maps.

\begin{remark}\label{Reminvmetric2}
Theorem \ref{invmetric2} remains valid with $\rho_f$ replaced by $\overline{\rho}_f$. Indeed, the exact same proof will work. 
\end{remark}

\begin{thm}\label{ThmBall}
If a Banach space $X$ maps into a superstable space by a  map  which is both uniformly continuous  and almost uncollapsed, then $B_X$ uniformly  embeds into a superstable space.
\end{thm}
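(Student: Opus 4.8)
The plan is to upgrade the given map to a uniform embedding of a (small, hence any) ball by first passing to its translation-invariant ``averaged'' version, which is subadditive, and then using subadditivity to promote the single-scale lower bound coming from almost uncollapsedness into an expansion estimate valid at all small scales.

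First I would apply Theorem~\ref{embTOultra}, in the form recorded in Remark~\ref{Reminvmetric2} (i.e.\ with $\rho_f$ replaced by the exact compression modulus $\overline{\rho}_f$): given $f\colon X\to Y$ uniformly continuous and almost uncollapsed with $Y$ superstable, and fixing $t_0>0$ with $c:=\overline{\rho}_f(t_0)>0$, one obtains a nonprincipal ultrafilter $\mathcal{U}$ on an index set $I$ and a map $F\colon X\to Z:=\ell_1(Y)^I/\mathcal{U}$ with $F(0)=0$,
\[\overline{\rho}_f(\|x-y\|)\le \|F(x)-F(y)\|\le \omega_f(\|x-y\|),\qquad \|F(x)-F(y)\|=\|F(x-y)\|,\]
for all $x,y\in X$. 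Here $Z$ is superstable: $\ell_1(Y)$ is superstable since superstability passes to $\ell_1$-sums, and every Banach space finitely representable in the ultrapower $Z$ is finitely representable in $\ell_1(Y)$, hence stable. Setting $N(z):=\|F(z)\|$, the second displayed relation together with the triangle inequality makes $N$ a subadditive, symmetric function with $N(0)=0$, and the first gives $\overline{\rho}_f(\|z\|)\le N(z)\le\omega_f(\|z\|)$; in particular $N(z)\ge c$ whenever $\|z\|=t_0$.

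Next I would fix $r>0$ with $2r\le t_0$ and $\omega_f(2r)<c$, which is possible since $\omega_f$ is nondecreasing with $\lim_{t\to0^+}\omega_f(t)=0$, and show that $F\restriction_{rB_X}\colon(rB_X,\|\cdot\|)\to Z$ is a uniform embedding; composing with the dilation $x\mapsto rx$ (a bi-Lipschitz bijection $B_X\to rB_X$) then gives the desired uniform embedding of $B_X$ into the superstable space $Z$. Uniform continuity is clear from $\|F(x)-F(y)\|\le\omega_f(\|x-y\|)$. For the expansion bound, given $z\in 2rB_X$ with $s:=\|z\|>0$, put $z_0:=\tfrac{t_0}{s}z$, so $\|z_0\|=t_0$ and $N(z_0)\ge c$, and write $\tfrac{t_0}{s}=k+\beta$ with $k=\lfloor t_0/s\rfloor\ge1$, $\beta\in[0,1)$; then $z_0=kz+\beta z$, so subadditivity and translation invariance of $N$ yield
\[c\le N(z_0)\le N(kz)+N(\beta z)\le k\,N(z)+\omega_f(\beta s)\le k\,N(z)+\omega_f(2r),\]
and hence $N(z)\ge (c-\omega_f(2r))/k\ge (c-\omega_f(2r))\,s/t_0$. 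Thus $\|F(x)-F(y)\|=N(x-y)\ge (c-\omega_f(2r))\,t/t_0>0$ whenever $x,y\in rB_X$ and $\|x-y\|\ge t$, so $\rho_{F\restriction_{rB_X}}(t)>0$ for all $t>0$.

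The step I expect to be the main obstacle is precisely this last one: ``almost uncollapsed'' only controls the distance at the single scale $t_0$, and for the raw map $f$ the quantity $\|f(\cdot)-f(\cdot)\|$ has no subadditivity to transfer that information to other scales; the point of routing through $F$ (equivalently, through the translation-invariant stable pseudometric $N$) is exactly to gain subadditivity, and it is also why one is forced to restrict to a ball $rB_X$ small enough that $2r\le t_0$ and $\omega_f(2r)<c$ rather than to $B_X$ itself. A small but necessary bookkeeping point, not needed in Corollary~\ref{invmetric2}, is to observe that an ultrapower of a superstable space is again superstable, which is what keeps the target space in the required class.
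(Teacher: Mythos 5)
Your argument is correct, and it follows the paper's overall architecture (pass to the invariant map $F$ on an ultrapower of $\ell_1(Y)$ via Theorem \ref{embTOultra} in the form of Remark \ref{Reminvmetric2}, then exploit subadditivity of $N(z)=\|F(z)\|$ to get a Lipschitz lower bound on a ball), but it diverges at the key step. The paper first invokes \cite[Proposition 63]{Rosendal2016} to upgrade the uniformly continuous almost uncollapsed map to a uniformly continuous \emph{solvent} map, and then applies Proposition \ref{propinjctive}: there, solvency supplies a window $[R,R+n]$ at arbitrarily large scale on which $\|F(x)\|>n$, so any $x$ in a prescribed bounded set can be blown up by an \emph{integer} factor $m_x$ into that window, yielding $\|F(x)\|\geq \frac{n}{R+n}\|x\|$ on all of $B_X$ (indeed on any bounded set) with no further rescaling. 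You instead stay at the single scale $t_0$ where almost-uncollapsedness bites, which forces the non-integer dilation $t_0/s=k+\beta$; you absorb the fractional remainder using uniform continuity (this is exactly why you need $\omega_f(2r)<c$ and must first shrink to $rB_X$ before dilating back to $B_X$). What your route buys is self-containment: no appeal to Rosendal's equivalence between solvency and $\sup_t\overline{\rho}_f(t)=\infty$, and no need for the intermediate notion of solvent maps at all. What the paper's route buys is Proposition \ref{propinjctive} as a standalone statement valid on every bounded set without the small-ball-plus-dilation detour, and it makes visible the role of solvency, which the authors discuss further in Problem \ref{ProbEmbEq}. Your observation that the ultrapower $\ell_1(Y)^I/\mathcal{U}$ is again superstable is a point the paper leaves implicit but does indeed need.
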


Before proving Theorem \ref{ThmBall}, we need the following proposition.

\begin{prop}\label{propinjctive}
Let $X$ and $Y$ be Banach spaces and $f\colon X\to Y$ be a solvent map such that $\|f(x)-f(y)\|=\|f(x-y)\|$ for all $x,y\in X$. Then, for every norm-bounded subset $B\subseteq X$, $f\restriction_B$ has a Lipschitz inverse. 
\end{prop}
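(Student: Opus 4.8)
The plan is to exploit the translation-invariance hypothesis $\|f(x)-f(y)\|=\|f(x-y)\|$ to reduce everything to the behavior of $f$ near $0$, and then use solvency to convert the global growth information into a local (Lipschitz) lower bound on a fixed ball. First I would record the immediate consequence of the identity: for all $x,y$, $\|f(x)-f(y)\|$ depends only on $x-y$, so it suffices to understand $g(t):=\inf\{\|f(u)-f(v)\|\mid \|u-v\|=t\}=\overline{\rho}_f(t)$ and, more importantly, its behavior as $t\to 0^+$. Note also that $f(0)$ may be assumed to be $0$ after translating, and that $\|f(x)\|=\|f(x)-f(0)\|$, so $f$ is ``subadditive-like'': for any $x$ and any positive integer $n$, writing $x=\sum_{k=1}^n (x/n)$ telescopically gives $\|f(x)\|\le \sum_{k=1}^{n}\|f(kx/n)-f((k-1)x/n)\|=n\,\omega$, where each summand has the same norm $\|f(x/n)\|$; hence $\|f(x)\|\le n\|f(x/n)\|$, i.e. $\|f(x/n)\|\ge \frac1n\|f(x)\|$. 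This already shows that $\overline\rho_f$ cannot decay too fast, but I need a genuine linear lower bound near $0$.

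The key step is to use solvency. Since $f$ is solvent and coarse (uniform continuity on a ball is not assumed globally, but on a bounded set $B$ it will follow from what we prove; alternatively one applies \cite[Lemma 60]{Rosendal2016} as quoted), $\sup_{t>0}\overline\rho_f(t)=\infty$: there are arbitrarily large radii $R$ at which $\|f(x)-f(y)\|$ is large whenever $\|x-y\|=R$. Fix the bounded set $B$, say $B\subseteq r B_X$, so that for $x,y\in B$ we have $\|x-y\|\le 2r=:\rho_0$. Choose, via solvency, a radius $R$ with $\overline\rho_f(R)=:c>0$ and $R$ as large as we like. Now given $x,y\in B$ with $0<\|x-y\|=t\le \rho_0$, set $m=\lceil R/t\rceil$, so $mt\ge R$ and $mt\le R+t\le R+\rho_0$; pick a point $w$ with $\|w\|=mt$ and write $w=\sum_{k=1}^m v_k$ with each $\|v_k\|=t$ (this is where I use that $X$ is a vector space: just march along the segment $[0,w]$ in $m$ equal steps). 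By the telescoping inequality and translation-invariance, $\overline\rho_f(R)\le \|f(w)\|\le \sum_{k=1}^m \|f(v_k)-f(v_{k-1}+\dots+v_1)\|= m\,\|f(u)-f(u')\|$ for the common value $\|f(x)-f(y)\|=\|f(u)-f(u')\|$ where $\|u-u'\|=t$. Wait — the summands are not literally all equal unless I choose the $v_k$ so that $f$ is evaluated on translates of the same vector; but translation-invariance says precisely $\|f(a+v)-f(a)\|=\|f(v)\|$, so each summand equals $\|f(v_k)\|=\|f(t\cdot e)\|$ for the common unit direction $e$, hence equals $\overline{\text{(a value realizing }t)}$. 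Taking infimum over the pair $(x,y)$ with $\|x-y\|=t$ at the end gives
\[
\overline\rho_f(t)\ \ge\ \frac{\overline\rho_f(R)}{m}\ \ge\ \frac{c}{R/t+1}\ =\ \frac{c\,t}{R+t}\ \ge\ \frac{c}{R+\rho_0}\,t
\]
for all $0<t\le \rho_0$. This is the desired linear lower bound: $\|f(x)-f(y)\|\ge \frac{c}{R+\rho_0}\|x-y\|$ for all $x,y\in B$, so $f\restriction_B$ is injective with a Lipschitz inverse of constant $(R+\rho_0)/c$.

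I expect the main obstacle to be the bookkeeping in the telescoping step: making precise that when one subdivides the segment $[0,w]$ into $m$ equal pieces and telescopes, translation-invariance forces every summand to have the same norm $\|f(v)\|$ with $\|v\|=t$, and then identifying this with (a quantity $\ge$) $\overline\rho_f(t)$ — and conversely bounding $\|f(w)\|$ below by $\overline\rho_f(R)$ using $\|w\|=mt\ge R$, which needs $\overline\rho_f$ to be (quasi-)monotone on the relevant range, or more safely needs solvency applied in the interval form ``$\|x-y\|\in[R,R+n]\Rightarrow\|f(x)-f(y)\|>n$'' with $n$ chosen so that $[R,R+n]$ contains $mt$. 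A clean way around monotonicity issues is to invoke solvency directly: pick $n$ with $n>$ whatever bound we want for the inverse modulus, get $R$ from the definition of solvent, and note $mt\in[R,R+\rho_0]\subseteq[R,R+n]$ once $n\ge\rho_0$, so $\|f(w)\|>n$ outright. Everything else — symmetry, the triangle inequality, $f(0)=0$ after translation — is routine.
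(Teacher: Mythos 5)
Your argument is correct and is essentially the paper's proof: both rest on the subadditivity $\|f(mv)\|\le m\|f(v)\|$ obtained by telescoping along the segment $[0,mv]$ with translation-invariance, combined with the interval form of solvency (choosing $n\ge \operatorname{diam}B$ and $R$ so that some multiple $m\|x-y\|$ lands in $[R,R+n]$) to get the linear lower bound $\|f(x)-f(y)\|\ge \frac{n}{R+n}\|x-y\|$ on $B$. Your closing remark about invoking solvency directly rather than via monotonicity of $\overline{\rho}_f$ is exactly how the paper handles it, and the aside about coarseness is unnecessary once you take that route.
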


\begin{proof}
First notice that, 

\[\|f(x)\|=\|f(x)-f(0)\|=\|f(0)-f(x)\|=\|f(-x)\|,\]\hfill

\noindent for all $x\in X$. Therefore, 

\[\|f(mx)\|=\|f((m-1)x)-f(-x)\|\leq \|f((m-1)x)\|+\|f(x)\|,\]\hfill

\noindent for all $x\in X$, and all $m\in\N$. So, $\|f(mx)\|\leq m\cdot\|f(x)\|$, for all $x\in X$, and all $m\in\N$. Let $N\in\N$ be such that $ B\subseteq N\cdot B_X$. As $f$ is solvent, we can find $n,R>2N$ such that $\|x\|\in[R,R+n]$ implies $\|f(x)\|>n$ (indeed, choose $R'$ as in the definition for $4N+2$, and then let $n=2N+1$ and $R=R'+2N+1$). By our choice of $n$ and $R$, for each $x\in 2N\cdot B_X$ we can pick $m_x\in\N$ such that $\|m_xx\|\in [R,R+n]$.  Hence,

\[\|f(x)\|>\frac{n}{m_x}\geq \frac{n}{R+n}\|x\|,\]\hfill

\noindent for all $x\in 2 N\cdot B_X$. This gives us that $\|f(x)-f(y)\|\geq \frac{n}{R+n}\|x-y\|$, for all $x,y\in B$. 
\end{proof}

\begin{proof}[Proof of Theorem \ref{ThmBall}.]
If $X$ maps into a superstable space by a uniformly continuous almost uncollapsed map, then, by \cite[Proposition 63]{Rosendal2016}, $X$ maps into a superstable space  by a map which is both  uniformly continuous and solvent. Hence, by Remark \ref{Reminvmetric2}, $X$ maps into a superstable space $Y$ by a uniformly continuous solvent map $F$ such that $\|F(x)-F(y)\|=\|F(x-y)\|$, for all $x,y\in X$. By Proposition \ref{propinjctive}, $F\restriction_{B_X}$ has a Lipschitz inverse. In particular, $B_X$ uniformly embeds into a superstable space.
\end{proof}

As we mentioned before,  it remains open whether coarse and uniform embeddability are equivalent in the context of Banach spaces. However, much more remains unknown regarding embeddability between Banach spaces. Precisely, the following is open (see \cite{BragaWeak} for more on that).

\begin{problem}\label{ProbEmbEq}
Let $X$ and $Y$ be Banach spaces. Are the following equivalent?

\begin{enumerate}[(i)]
\item $X$ uniformly embeds into $Y$.
\item $X$ coarsely embeds into $Y$.
\item $X$ maps into $Y$ by a map which is both a uniform and a coarse embedding.
\item $X$ maps into $Y$ by a map which is uniformly continuous and almost uncollapsed.
\item $X$ maps into $Y$ by a map which is coarse and solvent.
\end{enumerate}
\end{problem}

\section{Type space.}\label{SectionType}

Following Raynaud, our strategy for proving Theorem \ref{main} is to first make an appropriate definition for the type space of a Banach space coarsely embeddable into a superstable Banach space.  This type space needs to have certain compactness properties and needs to be able to not only reflect the metric structure of the Banach space, but also the algebraic structure.  Using compactness and methods commonly employed in the proof of Krivine's theorem, we'll be able to show the existence of a type that satisfies a nice $\ell_p$-inequality, and then push this back down onto the Banach space.

To motivate the definition of the type space, first consider a general metric space $(M,d)$.  One may ask whether $M$ can be compactified in a way that preserves the metric structure on $M$.  That is, under what conditions will there exist a compact metrizable space $\mathcal{T}$ such that $M$ homeomorphically maps onto a dense subset of $\mathcal{T}$?  Separability is certainly a necessary condition, and given that $\mathrm{Lip}_1(M)$ (the space of all real-valued Lipschitz functions over $M$ with Lipschitz constant less than or equal to $1$) is metrizable and closed in $\mathbb{R}^M$ under the pointwise-convergence topology when $M$ is separable, a natural $\sigma$-locally compact metrizable $\mathcal{T}$ that contains a dense homeomorphic copy of $M$ is the closure of $\{\overline{x}\}_{x\in M}$ in $\mathbb{R}^M$, where $\overline{x}$ is defined for all $x\in M$ by $\overline{x}(y)=d(x,y)$ for all $y\in M$.
If $d$ is a bounded metric, then $\mathcal{T}$ is in fact compact, and since every metric is topologically equivalent to a bounded metric, separability is also a sufficient condition.

Supposing now that $M$ is a vector space, and $\lim_{n\to \infty}\overline{x}_n$, $\lim_{n\to \infty} \overline{y}_n$ both exist, one may further ask under what conditions do $\lim_{n\to \infty} \overline{(x_n+y_n)}$ and $\lim_{n\to \infty} \overline{(\alpha x_n)}$ exist, where $\alpha$ is some scalar.  
Stability of $d$ is enough to show the existence of $\lim_{m\to \infty}\lim_{n\to \infty} \overline{x}_n(z-y_m)$ for any $z\in M$, and if $d$ is also translation-invariant, this means $\lim_{n\to \infty}\overline{(x_n+y_n)}$ exists after taking an appropriate subsequence.  
If $d$ is induced by a norm then $\lim_{n\to \infty} \overline{(\alpha x_n)}$ certainly exists since $\overline{(\alpha x_m)}(y)=|\alpha| \overline{x}_n(y/\alpha)$.  Otherwise, a slight modification needs to be made to $\mathcal{T}$.  
One must now account for scalars by defining $\mathcal{T}$ to be a subset of $\mathbb{R}^{\mathbb{F}\times M}$, where $\mathbb{F}$ is the field of scalars and $\overline{x}(\lambda, y)=d(\lambda x, y)$ for all $(\lambda, y)\in \mathbb{F}\times M$.  
Now, in this setting, $\lim_{n\to \infty} \overline{(\alpha x_n)}$ exists since $\overline{(\alpha x_n)}(\lambda, y)=\overline{x}_n(\lambda \alpha, y)$.  With these ideas in mind, we are now ready to explicitly define the type space we need.  For a more complete discussion of some of the ideas above, see \cite{Garling1981}.

From now on, we consider a separable infinite-dimensional Banach space $(X,\|\cdot\|)$ which admits a translation-invariant stable pseudometric $d$ coarsely equivalent to the metric induced by $\|\cdot\|$, and the corresponding identity map $\mathrm{Id}\colon (X,\|\cdot\|)\to (X,d)$.  By Corollary \ref{invmetric2}, such $d$ exists as long as $X$ coarsely embeds into a superstable space. 

\begin{remark}
Notice that, by Remark \ref{metricmetric}, we can actually assume that $d$ is a metric. However, in order to obtain the isomorphism constant in Remark \ref{constant} below, we need to work with  $d$ being the pseudometric given by Corollary \ref{invmetric2}. 
\end{remark}

\begin{remark}
Our definition of the type space will be similar to Raynaud's, with a few changes to the proofs resulting from having a metric that is coarsely equivalent rather than uniformly equivalent to the metric induced by the norm on $X$.  Note in particular that, in our case, a sequence may be dense in $(X,\|\cdot\|)$ while not being dense in $(X,d)$.  Thus, in order to have metrizability, we must use a countable subset of $X$ to define the type space.
\end{remark}

Let $\Delta$ be a countable $\|\cdot\|$-dense $\Q$-vector subspace of $X$. 
Given $x\in \Delta$, define the function $\overline{x}\in \mathbb{R}_+^{\mathbb{Q}\times \Delta}$ by $\overline{x}(\lambda, y)=d(\lambda x,y)$ for all $(\lambda, y)\in \mathbb{Q}\times \Delta$.  
The \emph{space of types} on $(\Delta, d\restriction_{\Delta\times \Delta})$, which we  denote by $\mathcal{T}$, is defined to be the closure of $\{\overline{x}\}_{x\in \Delta}$ in $\mathbb{R}^{\mathbb{Q}\times \Delta}$ (with the topology of pointwise convergence).
An element $\sigma$ of $\mathcal{T}$ is called a \emph{type}, and is called a \emph{realized type} if $\sigma=\overline{x}$ for some $x\in \Delta$, in which case $\sigma$ is also called the \emph{type realized by x}.  The type $\overline{0}$ is called the \emph{null} or \emph{trivial} type.

Note that the countability of $\mathbb{Q}\times \Delta$ implies that $\mathcal{T}$ is metrizable, and so every $\sigma\in \mathcal{T}$ can be expressed as $\lim_{n\to \infty}\overline{x}_n$ for some sequence $(x_n)_{n=1}^\infty$ in $\Delta$.  
Such a sequence is called a \emph{defining sequence} for $\sigma$.
Note also that in this case $\sigma(\lambda,x)=\lim_{n,\mathcal{U}}d(\lambda x_n,x)$ for every $(\lambda,x)\in \mathbb{Q}\times \Delta$ and every nonprincipal ultrafilter $\mathcal{U}$ over $\mathbb{N}$.
In particular, $\lim_{n\to \infty}d(x_n,0)$ exists, and so $(x_n)_{n=1}^\infty$ is a $d$-bounded (and therefore also $\|\cdot\|$-bounded) sequence in $\Delta$.

For every $ M\in \R_+$,  we let $\mathcal{T}_{M}=\{\sigma\in \mathcal{T}\mid \sigma(1,0)\leq M\}$. We will need the following lemma.

\begin{lemma}\label{T_Mcompact}
Say $M\in \mathbb{R}_+$. Then $\mathcal{T}_{M}$ is compact.
\end{lemma}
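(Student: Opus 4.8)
The plan is to show $\mathcal{T}_M$ is a closed subset of a compact space, and hence compact. First I would note that $\mathcal{T}$ sits inside $\mathbb{R}^{\mathbb{Q}\times\Delta}$ with the product topology, so by Tychonoff's theorem it suffices to trap $\mathcal{T}_M$ inside a product of compact intervals $\prod_{(\lambda,y)}[0,R(\lambda,y)]$ for suitable bounds $R(\lambda,y)<\infty$, and then check $\mathcal{T}_M$ is closed in that product. Closedness of $\mathcal{T}_M$ in $\mathcal{T}$ is immediate since $\sigma\mapsto\sigma(1,0)$ is continuous in the pointwise topology, so $\{\sigma:\sigma(1,0)\le M\}$ is closed; and $\mathcal{T}$ itself is closed in $\mathbb{R}^{\mathbb{Q}\times\Delta}$ by definition (it is a closure). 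So the whole content is the uniform bound.

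The key computation is the following. Fix $\sigma\in\mathcal{T}_M$ with a defining sequence $(x_n)$ in $\Delta$, so $\sigma(\lambda,y)=\lim_n d(\lambda x_n,y)$ for each $(\lambda,y)\in\mathbb{Q}\times\Delta$. Since $d$ is translation-invariant, $d(\lambda x_n,0)=d((\lambda-1)x_n,-x_n)\le$ — rather, I would use the integer/rational scaling trick already exploited in Proposition \ref{propinjctive}: for $m\in\mathbb{N}$, translation-invariance gives $d(mx_n,0)\le m\,d(x_n,0)$, and more generally for $\lambda=p/q\in\mathbb{Q}$ one bounds $d(\lambda x_n,0)$ in terms of $\lceil|\lambda|\rceil\, d(x_n,0)$ together with $d(x_n,0)$ itself (writing $\lambda x_n$ as a sum of $\lceil|\lambda|\rceil$ vectors each of $d$-norm at most $d(x_n,0)$ — this needs a touch of care since $q$ does not divide $p$, but $d((p/q)x_n,0)\le d((p/q)x_n,(\lceil\lambda\rceil)x_n)+d(\lceil\lambda\rceil x_n,0)$ and both terms are controlled). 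Hence $d(\lambda x_n,0)\le g(\lambda)\,\sigma(1,0)\le g(\lambda) M$ for some finite function $g\colon\mathbb{Q}\to\R_+$ depending only on $\lambda$. Then the triangle inequality gives, for every $(\lambda,y)\in\mathbb{Q}\times\Delta$,
\[
\sigma(\lambda,y)=\lim_n d(\lambda x_n,y)\le \lim_n\big(d(\lambda x_n,0)+d(0,y)\big)\le g(\lambda)M+d(y,0)=:R(\lambda,y)<\infty.
\]
Also $\sigma(\lambda,y)\ge 0$. Thus $\mathcal{T}_M\subseteq\prod_{(\lambda,y)\in\mathbb{Q}\times\Delta}[0,R(\lambda,y)]$, which is compact by Tychonoff.

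Assembling: $\mathcal{T}_M=\mathcal{T}\cap\{\sigma:\sigma(1,0)\le M\}$ is the intersection of the closed set $\mathcal{T}$ with a closed half-space, hence closed in $\mathbb{R}^{\mathbb{Q}\times\Delta}$; and it is contained in the compact product $\prod_{(\lambda,y)}[0,R(\lambda,y)]$. A closed subset of a compact set is compact, so $\mathcal{T}_M$ is compact.

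I expect the only real obstacle to be the scaling estimate $d(\lambda x,0)\le g(\lambda)\,d(x,0)$ for rational $\lambda$: translation-invariance plus the triangle inequality handles integer multiples cleanly, but for a genuine fraction $p/q$ one must argue that the intermediate quantity $d((p/q)x,\lceil p/q\rceil x)=d((\lceil p/q\rceil-p/q)x,0)$ is itself bounded, which requires iterating the argument or invoking that $(\lceil p/q\rceil-p/q)x$ has $\|\cdot\|$-norm at most $\|x\|$ and using the coarse equivalence of $d$ with $\|\cdot\|$ to get a bound — or, more simply, first establishing the estimate for $\lambda\in\mathbb{Z}$ and then for $\lambda=1/q$ via $d(x,0)=d(q\cdot(x/q),0)\le q\,d(x/q,0)$ rearranged to $d((1/q)x,0)\ge (1/q)d(x,0)$, which is the wrong direction, so one instead bounds $d((1/q)x,0)$ directly using that $\|(1/q)x\|\le\|x\|$ and $\omega_{\mathrm{Id}}$ is finite-valued (coarseness of $\mathrm{Id}$). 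In fact the cleanest route is: by coarseness of $\mathrm{Id}\colon(X,\|\cdot\|)\to(X,d)$, for every $t\ge 0$ we have $\omega_{\mathrm{Id}}(t)<\infty$, so $d(\lambda x_n,0)\le \omega_{\mathrm{Id}}(|\lambda|\cdot\|x_n\|)$; and since $(x_n)$ is a defining sequence of a type, $(\|x_n\|)$ is bounded (the remark before the lemma notes $(x_n)$ is $\|\cdot\|$-bounded), so $d(\lambda x_n,0)$ is bounded by a finite constant depending on $\lambda$ and on a $\|\cdot\|$-bound for $(x_n)$ — but that bound itself may depend on $\sigma$, so to keep the bound uniform over $\mathcal{T}_M$ one does want to route through $\sigma(1,0)=\lim_n d(x_n,0)\le M$, which forces the scaling argument above. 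I would therefore present the integer-scaling estimate first and note that it extends to all rationals with a constant $g(\lambda)$ depending only on $\lambda$, then conclude as above.
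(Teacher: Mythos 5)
Your overall strategy (closedness plus Tychonoff) is exactly the paper's, and your reduction to a uniform bound $\sigma(\lambda,y)\le R(\lambda,y)$ over all $\sigma\in\mathcal{T}_M$ is the right target. But the way you propose to establish that bound has a genuine gap. Your primary route is the multiplicative scaling estimate $d(\lambda x,0)\le g(\lambda)\,d(x,0)$ for all rational $\lambda$. Translation-invariance gives this cleanly only for integer $\lambda$; for a genuine fraction you yourself note that every attempted reduction either goes in the wrong direction or requires a $\|\cdot\|$-bound on $x$, and you end by simply asserting that the estimate ``extends to all rationals.'' It does not, as stated: $d$ is only a translation-invariant \emph{pseudometric} coarsely equivalent to the norm, so $d(x,0)$ can vanish (or be very small) while $d(\lambda x,0)$ is not, and no constant $g(\lambda)$ can then work; more to the point, nothing in the hypotheses forces $d$ to be even approximately monotone or homogeneous along rays. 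So the step $d(\lambda x_n,0)\le g(\lambda)M$ is unproven and, in the multiplicative form you need, false in general.

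The missing idea is the one the paper uses, and you actually write down both of its ingredients without combining them. From $\sigma(1,0)=\lim_n d(x_n,0)\le M$ you get $d(x_n,0)\le M+1$ eventually; since $\mathrm{Id}\colon(X,\|\cdot\|)\to(X,d)$ is \emph{expanding} ($\rho_{\mathrm{Id}}(t)\to\infty$), the inequality $\rho_{\mathrm{Id}}(\|x_n\|)\le d(x_n,0)\le M+1$ forces $\|x_n\|\le R$ for some $R<\infty$ depending only on $M$ --- this is precisely the uniform norm bound you were worried ``may depend on $\sigma$.'' Then coarseness gives $d(\lambda x_n,0)\le\omega_{\mathrm{Id}}(|\lambda|\,\|x_n\|)\le\omega_{\mathrm{Id}}(|\lambda|R)$, and the triangle inequality yields $\sigma(\lambda,y)\le\omega_{\mathrm{Id}}(|\lambda|R)+d(0,y)$ uniformly over $\mathcal{T}_M$. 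With that substitution your closedness-plus-Tychonoff assembly goes through verbatim, and no scaling argument is needed at all.
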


\begin{proof}
Say $\sigma \in \mathcal{T}_M$, and $(x_n)_{n=1}^\infty$ is a defining sequence for $\sigma$. 
As $\lim_{n\to \infty}d(x_n,0)=\sigma(1,0)\leq M$, we may suppose that the defining sequence for $\sigma$ is contained in the $d$-ball of radius $M+1$ around $0$. As $\text{Id}\colon (X,\|\cdot\|)\to (X,d)$ is expanding, there exists 
 $R<\infty$  such that $t\leq R$ whenever $\rho_{\mathrm{Id}}(t)\leq M+1$. Then, since $\rho_{\mathrm{Id}}(\|x_n\|)\leq d(x_n,0)\leq M+1$ for every $n\in \mathbb{N}$, we have 

\[\sigma(\lambda,x)=\lim_{n}d(\lambda x_n,x)\leq \lim_{n} (d(\lambda x_n,0)+d(0,x))\leq \omega_{\mathrm{Id}}(|\lambda| R)+d(0,x)\]\hfill

\noindent for all $(\lambda,x)\in \mathbb{Q}\times \Delta$.
That is, we have

\[\mathcal{T}_M\subseteq  \prod_{(\lambda,x)\in \mathbb{Q}\times\Delta}[0,\omega(|\lambda| R)+d(x,0)],\]\hfill

\noindent since $\sigma\in \mathcal{T}_M$ was arbitrary.
By Tychonoff's theorem and the fact that $\mathcal{T}_M$ is closed, we are finished.
\end{proof}

\begin{cor}
\label{TM_locallycompact}
The metric space $\mathcal{T}$ is $\sigma$-locally compact.
\end{cor}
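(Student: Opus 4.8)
The whole statement is essentially a packaging of Lemma~\ref{T_Mcompact}, so the proof will be short. I would first record that for a point $\sigma\in\mathcal{T}$ the number $\sigma(1,0)$ is finite: indeed $\sigma=\lim_n\overline{x}_n$ for a defining sequence $(x_n)$, and as already observed such a sequence is $d$-bounded, so $\sigma(1,0)=\lim_n d(x_n,0)<\infty$. Hence $\mathcal{T}=\bigcup_{M\in\mathbb{N}}\mathcal{T}_M$, and since each $\mathcal{T}_M$ is compact by Lemma~\ref{T_Mcompact}, the space $\mathcal{T}$ is $\sigma$-compact.

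For local compactness, the key point is that the evaluation $\sigma\mapsto\sigma(1,0)$ is continuous on $\mathcal{T}$, being the restriction to $\mathcal{T}$ of the coordinate projection $\mathbb{R}^{\mathbb{Q}\times\Delta}\to\mathbb{R}$ at $(1,0)$. Therefore, for each $M\in\mathbb{R}_+$, the set $U_M:=\{\sigma\in\mathcal{T}\mid\sigma(1,0)<M\}$ is open in $\mathcal{T}$, and clearly $U_M\subseteq\mathcal{T}_M$. Given an arbitrary type $\sigma\in\mathcal{T}$, I would pick $M\in\mathbb{N}$ with $\sigma(1,0)<M$ (possible by the finiteness noted above); then $U_M$ is an open neighbourhood of $\sigma$ whose closure $\overline{U_M}$ is a closed subset of the compact set $\mathcal{T}_M$, hence compact. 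Since $\mathcal{T}$ is metrizable, this exhibits a compact neighbourhood of each of its points, so $\mathcal{T}$ is locally compact.

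Combining the two halves, $\mathcal{T}$ is a metrizable space which is both $\sigma$-compact and locally compact, i.e. $\sigma$-locally compact. I do not expect any genuine obstacle: the only mild subtleties are to note that it is the closure $\overline{U_M}$, and not $U_M$ itself, that serves as the compact neighbourhood, and that openness of $U_M$ comes from continuity of the single coordinate projection at $(1,0)$ together with the fact that $\mathcal{T}$ carries the subspace (pointwise-convergence) topology.
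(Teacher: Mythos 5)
Your proof is correct and is exactly the argument the paper intends: the corollary is stated without proof as an immediate consequence of Lemma \ref{T_Mcompact}, and your decomposition $\mathcal{T}=\bigcup_M\mathcal{T}_M$ together with the continuity of $\sigma\mapsto\sigma(1,0)$ (so that $\overline{U_M}\subseteq\mathcal{T}_M$ is a compact neighbourhood) fills in the details faithfully. No issues.
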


The next lemma will allow us to define analogues of scalar multiplication and vector addition in the type space, capturing some of the algebraic structure of $X$.

\begin{lemma}
\label{welldef}
Suppose $\sigma,\tau\in \mathcal{T}$.
Then if $(w_n)_{n=1}^\infty, (x_n)_{n=1}^\infty$ are defining sequences for $\sigma$ and  $(y_n)_{n=1}^\infty, (z_n)_{n=1}^\infty$ are defining sequences for $\tau$, then

\begin{enumerate}[(i)]
\item The limits $\lim_{n}\overline{(\alpha w_n)}$ and $\lim_{n}\overline{(\alpha x_n)}$ exist and are equal for every $\alpha\in \mathbb{Q}$.\\
\item The limits $\lim_{n}\lim_{m}\overline{(w_n+y_m)}$ and $\lim_{n}\lim_{m}\overline{(x_n+z_m)}$ exist and are equal.\\
\end{enumerate}
\end{lemma}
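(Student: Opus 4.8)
The plan is to get (i) straight from the definitions, and to prove (ii) by reorganizing the double array $\big(\overline{(w_n+y_m)}\big)_{n,m}$ — via translation invariance of $d$ — into a form where the stability of $d$ applies, and then to remove the dependence on the choice of defining sequences by an interleaving argument. For (i): given $\alpha\in\mathbb{Q}$ and $(\lambda,y)\in\mathbb{Q}\times\Delta$, one has $\overline{(\alpha w_n)}(\lambda,y)=d(\lambda\alpha w_n,y)=\overline{w_n}(\lambda\alpha,y)$, and since $\lambda\alpha\in\mathbb{Q}$ and $(w_n)_{n=1}^\infty$ is a defining sequence for $\sigma$, this converges to $\sigma(\lambda\alpha,y)$. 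Hence $\lim_n\overline{(\alpha w_n)}$ exists pointwise, equals the function $(\lambda,y)\mapsto\sigma(\lambda\alpha,y)$ — which depends only on $\sigma$ and $\alpha$, so it coincides with $\lim_n\overline{(\alpha x_n)}$ — and lies in $\mathcal{T}$ because $\alpha w_n\in\Delta$ and $\mathcal{T}$ is closed in $\mathbb{R}^{\mathbb{Q}\times\Delta}$. Evaluating at $(\lambda,y)=(1,0)$ records, for each fixed $\alpha\in\mathbb{Q}$, that $(d(\alpha w_n,0))_{n=1}^\infty$ is bounded; this will be used in (ii).

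For (ii), fix $(\lambda,y)\in\mathbb{Q}\times\Delta$. Since $\Delta$ is a $\mathbb{Q}$-vector space, $w_n+y_m$, $y-\lambda w_n$, and $y-\lambda y_m$ all lie in $\Delta$, and translation invariance of $d$ gives
\[\overline{(w_n+y_m)}(\lambda,y)=d(\lambda w_n+\lambda y_m,y)=\overline{y_m}(\lambda,y-\lambda w_n)=\overline{w_n}(\lambda,y-\lambda y_m).\]
Letting $m\to\infty$ in the third expression (with $n$ fixed) shows $\lim_m\overline{(w_n+y_m)}(\lambda,y)=\tau(\lambda,y-\lambda w_n)$ exists; letting $n\to\infty$ in the fourth expression (with $m$ fixed) shows $\lim_n\overline{(w_n+y_m)}(\lambda,y)=\sigma(\lambda,y-\lambda y_m)$ exists. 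Put $a_n=\lambda w_n$ and $b_m=y-\lambda y_m$; by (i), the triangle inequality, and translation invariance, $(a_n)_n$ and $(b_m)_m$ are $d$-bounded sequences in $X$. Applying stability of $d$ to $(a_n)_n,(b_m)_m$ — the inner limits existing, so the corresponding ultrafilter inner limits agree with them — gives, for all nonprincipal ultrafilters $\mathcal{U},\mathcal{V}$ on $\mathbb{N}$,
\[\lim_{n,\mathcal{U}}\tau(\lambda,y-\lambda w_n)=\lim_{n,\mathcal{U}}\lim_{m,\mathcal{V}}d(a_n,b_m)=\lim_{m,\mathcal{V}}\lim_{n,\mathcal{U}}d(a_n,b_m)=\lim_{m,\mathcal{V}}\sigma(\lambda,y-\lambda y_m).\]
The left side is independent of $\mathcal{V}$ and the right of $\mathcal{U}$, so both equal a constant $c(\lambda,y)$ independent of the ultrafilters; a bounded real sequence whose limit along every nonprincipal ultrafilter equals the same constant must converge to it, so $\lim_n\tau(\lambda,y-\lambda w_n)$ and $\lim_m\sigma(\lambda,y-\lambda y_m)$ both exist and equal $c(\lambda,y)$. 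Hence $\lim_n\lim_m\overline{(w_n+y_m)}$ exists pointwise; since each inner limit lies in $\mathcal{T}$ (closedness, as $w_n+y_m\in\Delta$) and $\mathcal{T}$ is closed, the iterated limit is a type, with $\lim_n\lim_m\overline{(w_n+y_m)}(\lambda,y)=\lim_n\tau(\lambda,y-\lambda w_n)$.

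Finally, this value depends only on $\sigma$ and $\tau$: the expression $\lim_n\tau(\lambda,y-\lambda w_n)$ involves only $(w_n)_n$ and the type $\tau$, so replacing $(y_m)_m$ by $(z_m)_m$ changes nothing. To replace $(w_n)_n$ by $(x_n)_n$, let $(v_i)_i$ be the coordinatewise interleaving of $(w_n)_n$ and $(x_n)_n$; interleaving two sequences with a common pointwise limit yields a sequence with the same limit, so $(v_i)_i$ defines $\sigma$, whence $\lim_i\tau(\lambda,y-\lambda v_i)$ exists by the above. Its odd- and even-indexed sub-sequences are $(\tau(\lambda,y-\lambda w_n))_n$ and $(\tau(\lambda,y-\lambda x_n))_n$, which therefore both converge to $\lim_i\tau(\lambda,y-\lambda v_i)$; hence $\lim_n\tau(\lambda,y-\lambda w_n)=\lim_n\tau(\lambda,y-\lambda x_n)$. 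Therefore $\lim_n\lim_m\overline{(w_n+y_m)}=\lim_n\lim_m\overline{(x_n+z_m)}$, which proves (ii). I expect the main obstacle to be the middle step of (ii): recasting $\overline{(w_n+y_m)}$ in the shape $d(a_n,b_m)$ via translation invariance so that stability of $d$ applies, and then upgrading the conclusion that every nonprincipal-ultrafilter limit equals one fixed constant to genuine convergence of the sequence; the scalar part (i), the uses of closedness of $\mathcal{T}$, and the interleaving are routine once this is in place.
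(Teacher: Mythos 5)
Your proof is correct and is exactly the argument the paper intends: the paper dispatches this lemma in one line ("item (i) follows immediately from the definitions... by a straightforward argument using the translation-invariance and stability of $d$, item (ii) also follows"), and your write-up fills in precisely that route, recasting $\overline{(w_n+y_m)}(\lambda,y)$ as $d(\lambda w_n,\,y-\lambda y_m)$ via translation invariance and then invoking stability. The only mild redundancy is the interleaving step at the end: since the common value equals $\lim_m\sigma(\lambda,y-\lambda y_m)$ (independent of the first defining sequence) and also $\lim_n\tau(\lambda,y-\lambda w_n)$ (independent of the second), independence of both choices already follows.
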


\begin{proof} Item (i) follows immediately from the definitions.  By a straightforward argument using the translation-invariance and stability of $d$, item (ii) also follows.
\end{proof}

\begin{defi}
Let $\sigma, \tau\in \mathcal{T}$ and let $(x_n)_{n=1}^\infty, (y_m)_{m=1}^\infty$ be any defining sequences for $\sigma$ and $ \tau$, respectively. We define the  \emph{dilation} operation on $\mathcal{T}$ by $(\alpha,\sigma)\in \Q\times\mathcal{T}\mapsto\alpha\cdot\sigma\in \mathcal{T}$, where   $\alpha \cdot \sigma\coloneqq \lim_{n}\overline{(\alpha x_n)}$.
We define the \emph{convolution} operation on $\mathcal{T}$ by $(\sigma,\tau)\in\mathcal{T}\times\mathcal{T}\mapsto\sigma*\tau\in\mathcal{T}$, where $\sigma \ast \tau\coloneqq \lim_{n}\lim_{m}\overline{(x_n+y_m)}$.
By Lemma \ref{welldef}, both dilation and convolution are well-defined.  For $(\sigma_j)_{j=1}^k\subseteq \mathcal{T}$, we define $\bigast_{j=1}^k\sigma_j$ in the obvious way, and we allow dilation to bind more strongly than convolution in our notation, i.e., we write $\alpha \cdot\sigma \ast \tau$ to mean $(\alpha \cdot \sigma)\ast \tau$.
\end{defi}

It follows easily from the definitions that, given $\sigma \in \mathcal{T}$ and a defining sequence $(x_n)_{n=1}^\infty$ for $\sigma$, we have $\alpha \cdot \sigma (\lambda,x)=\sigma(\lambda \alpha, x)$ for every $(\lambda,x)\in \mathbb{Q}\times \Delta$ and $\sigma\ast\tau=\lim_{n\to \infty}\overline{x}_n\ast\tau$ for every $\tau\in \mathcal{T}$. Furthermore, using the translation-invariance and stability of $d$, it is easily shown that convolution is associative and commutative, and that dilation distributes over convolution.  We now prove some continuity properties of our dilation and convolution maps.

\begin{lemma}
\label{rightcont}
Dilation is a right-continuous map from $\mathbb{Q}\times\mathcal{T}$ to $\mathcal{T}$.
\end{lemma}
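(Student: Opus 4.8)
The plan is to deduce the statement at once from the pointwise formula for dilation recorded just after its definition: for every $\sigma\in\mathcal{T}$, every defining sequence $(x_n)_{n=1}^\infty$ of $\sigma$, and every $\alpha\in\mathbb{Q}$, one has $(\alpha\cdot\sigma)(\lambda,x)=\sigma(\lambda\alpha,x)$ for all $(\lambda,x)\in\mathbb{Q}\times\Delta$. Thus dilation by a fixed $\alpha$ is simply the restriction to $\mathcal{T}$ of the reindexing self-map of $\mathbb{R}^{\mathbb{Q}\times\Delta}$ that sends a function $g$ to $(\lambda,x)\mapsto g(\lambda\alpha,x)$, and such a reindexing is automatically continuous for the topology of pointwise convergence, being, in each output coordinate, a coordinate evaluation. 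Here ``right-continuous'' is to be read as continuity in the second ($\mathcal{T}$-)variable for each fixed scalar $\alpha\in\mathbb{Q}$; one should not try to prove continuity jointly or in the variable $\alpha$, since $d$ is only coarsely -- not uniformly -- equivalent to the norm, so that $\lambda\mapsto d(\lambda z,x)$, and hence the types, can genuinely fail to be continuous in the scaling parameter.

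Concretely, I would proceed as follows. First fix $\alpha\in\mathbb{Q}$; since $\mathcal{T}$ is metrizable it suffices to work with sequences, so take $\sigma_n\to\sigma$ in $\mathcal{T}$, i.e.\ $\sigma_n(\mu,y)\to\sigma(\mu,y)$ for every $(\mu,y)\in\mathbb{Q}\times\Delta$. Next recall, from the definition of dilation together with Lemma \ref{welldef}, that $\alpha\cdot\sigma_n$ and $\alpha\cdot\sigma$ lie in $\mathcal{T}$ and satisfy $(\alpha\cdot\sigma_n)(\lambda,x)=\sigma_n(\lambda\alpha,x)$ and $(\alpha\cdot\sigma)(\lambda,x)=\sigma(\lambda\alpha,x)$ for all $(\lambda,x)\in\mathbb{Q}\times\Delta$. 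Then observe that for each such $(\lambda,x)$ the pair $(\lambda\alpha,x)$ again belongs to the countable set $\mathbb{Q}\times\Delta$, so $(\alpha\cdot\sigma_n)(\lambda,x)=\sigma_n(\lambda\alpha,x)\to\sigma(\lambda\alpha,x)=(\alpha\cdot\sigma)(\lambda,x)$. Since this convergence holds in every coordinate of $\mathbb{R}^{\mathbb{Q}\times\Delta}$, we conclude $\alpha\cdot\sigma_n\to\alpha\cdot\sigma$ in $\mathcal{T}$, which is exactly the asserted continuity in the $\mathcal{T}$-variable.

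I expect there to be no real obstacle: all the substance is already packaged in the identity $(\alpha\cdot\sigma)(\lambda,x)=\sigma(\lambda\alpha,x)$ and in the fact (from Lemma \ref{welldef}) that dilation is well defined and lands in $\mathcal{T}$, after which the argument is a one-line transfer of pointwise convergence through a coordinate relabelling. The only thing requiring a modicum of care is the bookkeeping around scalars -- checking that $(\lambda\alpha,x)$ stays in $\mathbb{Q}\times\Delta$ so that $\sigma_n$ and $\sigma$ are indeed being evaluated at a legitimate index -- together with resisting the temptation to overclaim a stronger (joint, or first-variable) form of continuity that simply does not hold in the coarse setting.
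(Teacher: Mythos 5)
Your proposal is correct and follows essentially the same route as the paper: fix $\alpha\in\mathbb{Q}$, use the identity $(\alpha\cdot\sigma)(\lambda,x)=\sigma(\lambda\alpha,x)$, and transfer pointwise convergence through the relabelling $(\lambda,x)\mapsto(\lambda\alpha,x)$, which stays in $\mathbb{Q}\times\Delta$. The interpretation of ``right-continuous'' as continuity in the $\mathcal{T}$-variable for each fixed scalar is also exactly what the paper intends.
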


\noindent
\begin{proof}
Fix $\alpha\in \mathbb{Q}$ and suppose $(\sigma_n)_{n=1}^\infty$ is a sequence in $\mathcal{T}$ converging to $\sigma\in \mathcal{T}$.
Then $\alpha \cdot \sigma (\lambda, x) =\sigma (\lambda \alpha ,x)=\lim_{n\to \infty}\sigma_n(\lambda \alpha,x)=\lim_{n\to \infty}\alpha\cdot \sigma_n(\lambda,x)$
for all $(\lambda, x)\in \mathbb{Q}\times \Delta$.
Thus $\alpha \cdot\sigma =\lim_{n\to \infty}\alpha \cdot\sigma_n$. This was for an arbitrary converging sequence in $\mathcal{T}$, so dilation is right continuous.
\end{proof}

\begin{lemma}
\label{sepcont}
Convolution is a separately continuous map from $\mathcal{T}\times \mathcal{T}$ to $\mathcal{T}$.
\end{lemma}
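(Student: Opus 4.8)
The plan is to prove continuity in each variable separately, which is all that ``separately continuous'' means. Since convolution is commutative (already established), it suffices to fix $\tau\in\mathcal{T}$ and show that $\sigma\mapsto\sigma\ast\tau$ is continuous on $\mathcal{T}$. As $\mathcal{T}$ is metrizable, I would fix a compatible metric $\varrho$ on $\mathcal{T}$ and verify sequential continuity: given a sequence $(\sigma_k)_{k=1}^\infty$ in $\mathcal{T}$ with $\sigma_k\to\sigma$, the goal is to show $\sigma_k\ast\tau\to\sigma\ast\tau$. The two structural inputs I intend to use are commutativity of $\ast$ and the identity $\sigma\ast\tau=\lim_{n\to\infty}\overline{x}_n\ast\tau$, valid for any defining sequence $(x_n)_{n=1}^\infty$ of $\sigma$, both recorded just before the lemma.

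The naive attempt is the main obstacle: one would like to fix a coordinate $(\lambda,x)$, write $(\sigma\ast\tau)(\lambda,x)=\lim_m\sigma(\lambda,x-\lambda y_m)$ for a defining sequence $(y_m)_{m=1}^\infty$ of $\tau$, approximate this by a single term $\sigma(\lambda,x-\lambda y_{m_0})$, and transfer the estimate to nearby types $\sigma'$ using that $\sigma'\mapsto\sigma'(\lambda,x-\lambda y_{m_0})$ is continuous. This fails because the rate at which $m\mapsto\sigma'(\lambda,x-\lambda y_m)$ converges depends on $\sigma'$, and this lack of uniformity is exactly why $\ast$ is only separately and not jointly continuous. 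The way around it is a diagonalization that replaces each $\sigma_k$ by a \emph{realized} type without moving its convolution with $\tau$ much: for each $k$, fix a defining sequence $(x_{k,n})_{n=1}^\infty$ for $\sigma_k$. By the quoted identity we have both $\overline{x_{k,n}}\to\sigma_k$ and $\overline{x_{k,n}}\ast\tau\to\sigma_k\ast\tau$ as $n\to\infty$, so I can choose $n_k$ and set $u_k:=x_{k,n_k}$ so that $\varrho(\overline{u_k},\sigma_k)<1/k$ and $\varrho(\overline{u_k}\ast\tau,\sigma_k\ast\tau)<1/k$.

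Now $\varrho(\overline{u_k},\sigma)\le\varrho(\overline{u_k},\sigma_k)+\varrho(\sigma_k,\sigma)\to0$, so $\overline{u_k}\to\sigma$; in other words $(u_k)_{k=1}^\infty$ is itself a defining sequence for $\sigma$, whence $\overline{u_k}\ast\tau\to\sigma\ast\tau$ by the same identity applied to this defining sequence. Combining, $\varrho(\sigma_k\ast\tau,\sigma\ast\tau)\le\varrho(\sigma_k\ast\tau,\overline{u_k}\ast\tau)+\varrho(\overline{u_k}\ast\tau,\sigma\ast\tau)<1/k+\varrho(\overline{u_k}\ast\tau,\sigma\ast\tau)\to0$, which gives $\sigma_k\ast\tau\to\sigma\ast\tau$ and finishes the argument. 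I note that this reasoning uses no compactness of $\mathcal{T}$ or of the sets $\mathcal{T}_M$ from Lemma \ref{T_Mcompact}, only metrizability together with the commutativity of $\ast$ and the identity $\sigma\ast\tau=\lim_n\overline{x}_n\ast\tau$; should one prefer, the reduction could instead be organized around the compactness of $\mathcal{T}_M$, into which the sequence $(\sigma_k\ast\tau)_{k=1}^\infty$ eventually falls.
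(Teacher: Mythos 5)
Your proof is correct and is essentially identical to the paper's: the same diagonalization choosing $u_k=x_{k,n_k}$ with both $\varrho(\overline{u_k},\sigma_k)<1/k$ and $\varrho(\overline{u_k}\ast\tau,\sigma_k\ast\tau)<1/k$, followed by the observation that $(u_k)_{k=1}^\infty$ is a defining sequence for $\sigma$ and two applications of the triangle inequality. The paper likewise uses only metrizability, commutativity, and the identity $\sigma\ast\tau=\lim_n\overline{x}_n\ast\tau$, with no appeal to compactness.
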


\noindent
\begin{proof}
Let $D$ be a metric compatible with the topology on $\mathcal{T}$.  Fix $\tau \in \mathcal{T}$ and suppose $(\sigma_n)_{n=1}^\infty$ is a sequence in $\mathcal{T}$ converging to $\sigma\in \mathcal{T}$. For each $n\in \mathbb{N}$, let $(x_{n,m})_{m=1}^\infty$ be a defining sequence for $\sigma_n$, and let $m_n\in \mathbb{N}$ be such that $D(\sigma_n, \overline{x}_{n,m_n})<\frac{1}{n}$ and $D(\overline{x}_{n,m_n}\ast\tau,\sigma_n\ast\tau)<\frac{1}{n}$.
Then $(x_{n,m_n})_{n=1}^\infty$ is a defining sequence for $\sigma$ by the triangle inequality; and so, again by triangle inequality, $\sigma\ast\tau
=\lim_{n}\sigma_n\ast\tau$.
This was for an arbitrary converging sequence in $\mathcal{T}$, so convolution (which is commutative) is separately continuous.
\end{proof}

\begin{cor}
\label{conv_is_baire}
Convolution is a Baire class 1 map from $\mathcal{T}\times\mathcal{T}$ to $\mathcal{T}$.
\end{cor}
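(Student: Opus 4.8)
The plan is to reduce the claim to Lemma \ref{bairelemma}, one coordinate at a time. Recall that $\mathcal{T}$ carries the subspace topology from $\mathbb{R}^{\mathbb{Q}\times\Delta}$ with the product topology, and $\mathbb{Q}\times\Delta$ is countable; fixing a bijection $\mathbb{N}\cong\mathbb{Q}\times\Delta$, a map $\Psi\colon\mathcal{T}\times\mathcal{T}\to\mathcal{T}$ is Baire class $1$ provided each composition $\pi_{(\lambda,x)}\circ\Psi$ with a coordinate projection $\pi_{(\lambda,x)}\colon\mathcal{T}\to\mathbb{R}$, $\sigma\mapsto\sigma(\lambda,x)$, is Baire class $1$. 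Indeed, by the facts on Baire class $1$ functions recalled in Subsection \ref{SubsectionBaire}, the induced map $\mathcal{T}\times\mathcal{T}\to\mathbb{R}^{\mathbb{Q}\times\Delta}$ is then Baire class $1$, and since open subsets of $\mathcal{T}$ are exactly the traces of open subsets of $\mathbb{R}^{\mathbb{Q}\times\Delta}$, the preimage under $\Psi$ of an open subset of $\mathcal{T}$ is $F_\sigma$. So it suffices to fix $(\lambda,x)\in\mathbb{Q}\times\Delta$ and prove that $\Phi\colon\mathcal{T}\times\mathcal{T}\to\mathbb{R}$, $\Phi(\sigma,\tau)=(\sigma\ast\tau)(\lambda,x)$, is Baire class $1$.

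By Lemma \ref{sepcont} convolution is separately continuous, so composing with the continuous projection $\pi_{(\lambda,x)}$ shows $\Phi$ is separately continuous on $\mathcal{T}\times\mathcal{T}$. I will then apply Lemma \ref{bairelemma} with both of its spaces equal to $\mathcal{T}$. The space $\mathcal{T}$ is metrizable, and it is $\sigma$-compact: by Lemma \ref{T_Mcompact} each $\mathcal{T}_M$ is compact, and every type $\sigma$ lies in $\mathcal{T}_M$ for $M=\lceil\sigma(1,0)\rceil$ since $\sigma(1,0)=\lim_n d(x_n,0)<\infty$ along any defining sequence. It remains to exhibit a compatible metric $D$, a countable compact cover $\mathcal{K}$ of $\mathcal{T}$, and a $\delta>0$ witnessing the uniform local finiteness hypothesis of Lemma \ref{bairelemma}. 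Enumerate $\mathbb{Q}\times\Delta=\{(\lambda_k,x_k)\}_{k\geq1}$ so that $(\lambda_1,x_1)=(1,0)$, and put $D(\sigma,\tau)=\sum_{k\geq1}2^{-k}\min\{1,|\sigma(\lambda_k,x_k)-\tau(\lambda_k,x_k)|\}$; this metric induces the topology of $\mathcal{T}$ and satisfies $D(\sigma,\tau)\geq\tfrac12\min\{1,|\sigma(1,0)-\tau(1,0)|\}$. Let $\mathcal{A}_M=\{\sigma\in\mathcal{T}\mid M-1\leq\sigma(1,0)\leq M+1\}$ for $M\geq1$; each $\mathcal{A}_M$ is closed in $\mathcal{T}$ and contained in the compact set $\mathcal{T}_{M+1}$, hence compact, and $\mathcal{T}=\bigcup_{M\geq1}\mathcal{A}_M$. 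Taking $\delta=\tfrac14$, the ball $B_\delta(\sigma)$ is contained in $\{\tau\mid|\tau(1,0)-\sigma(1,0)|<\tfrac12\}$, which can meet $\mathcal{A}_M$ only when $|M-\sigma(1,0)|<\tfrac32$, i.e.\ for at most three values of $M$. Thus Lemma \ref{bairelemma} applies, $\Phi$ is a pointwise limit of continuous functions on $\mathcal{T}\times\mathcal{T}$ and hence Baire class $1$, and combined with the reduction of the first paragraph this proves the corollary.

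The whole argument is essentially bookkeeping once the right metric is chosen; the only genuine point is to pick $D$ so that the coordinate $\sigma\mapsto\sigma(1,0)$ — which is what governs compactness of the sets $\mathcal{T}_M$ via Lemma \ref{T_Mcompact} — is controlled by $D$, so that small $D$-balls are squeezed into thin slabs and therefore meet only finitely many members of the annular cover $\{\mathcal{A}_M\}_{M\geq1}$. This is precisely the situation in which Lemma \ref{bairelemma} upgrades separate continuity into being a pointwise limit of continuous functions.
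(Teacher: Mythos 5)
Your proof is correct and follows essentially the same route as the paper: reduce to the coordinate functions $\Phi_{\lambda,x}(\sigma,\tau)=\sigma\ast\tau(\lambda,x)$, invoke Lemma \ref{sepcont} for separate continuity, and apply Lemma \ref{bairelemma} with a compatible metric and a locally finite compact cover built from the sets $\mathcal{T}_M$ of Lemma \ref{T_Mcompact}. Your explicit choice of the metric $D$ controlling the coordinate $\sigma\mapsto\sigma(1,0)$ in fact makes precise a point the paper states somewhat loosely ("choose a compatible metric $D$ ... and note that there is $\delta>0$ ..."), but the argument is the same.
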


\begin{proof}
Given $(\lambda,x)\in \mathbb{Q}\times \Delta$, let $\Phi_{\lambda,x}\colon \mathcal{T}\times\mathcal{T}\to \mathbb{R}$ be defined by $\Phi_{\lambda,x}(\sigma,\tau)=\sigma\ast\tau(\lambda,x)$ for all $\sigma,\tau\in \mathcal{T}$.
Choose a compatible metric $D$ for the topology on $\mathcal{T}$ and note that there is $\delta>0$ such that $D(\sigma, \tau)\geq \delta$ whenever $|\sigma(1,0)-\tau(1,0)|$ is large enough.
Now, by Lemma \ref{sepcont} and the topology on $\mathcal{T}$, $\Phi_{\lambda,x}$ is separately continuous; and by Lemma \ref{T_Mcompact}, $\mathcal{T}_M$ is compact for every $M\in \mathbb{R}_+$.
Thus; applying Lemma \ref{bairelemma} with $X=Y=\mathcal{T}$, $f=\Phi_{\lambda,x}$, $d=D$, $\mathcal{K}=\{\mathcal{T}_{M+1}\setminus \mathrm{int}\mathcal{T}_M\}_{M=0}^\infty$, and with $\delta$  as above in the statement of Lemma \ref{bairelemma}; we have that $\Phi_{\lambda,x}$ is the pointwise limit of a sequence of continuous functions, and is therefore Baire class 1.
As this is true for any $(\lambda,x)\in \mathbb{Q}\times \Delta$, convolution is itself Baire class 1.
\end{proof}

The sequence in the statement of our main theorem will be a defining sequence for one of the types in $\mathcal{T}$.
We will eventually prove an inequality for the type and then show that a similar inequality holds for the spreading model associated with the sequence, but first we need to know under what circumstances a type's defining sequence even has a spreading model.
We already know that a defining sequence $(x_n)_{n=1}^\infty$ for a type $\sigma$ is bounded in norm, but we want to put a condition on $\sigma$ that guarantees $(x_n)_{n=1}^\infty$ is eventually bounded away from zero in norm.
This motivates our next definition.

\begin{defi}
A type $\sigma\in \mathcal{T}$  is called \emph{admissible} if $\sigma(1,0)>\inf_{t>0}\omega_\mathrm{Id}(t)$.
\end{defi}

Note that if $\sigma$ is an admissible type and $(x_n)_{n=1}^\infty$ is a defining sequence for $\sigma$, then $\liminf_{n}\omega_\mathrm{Id}(\|x_n\|)\geq \lim_{n}d(x_n,0)=\sigma(1,0)>\inf_{t>0}\omega_\mathrm{Id}(t)$.  
Thus, since $\omega_\mathrm{Id}$ is an increasing function, we can find $\delta>0$ such that $(x_n)_{n=1}^\infty$ is eventually $\delta$-bounded in norm away from zero.
 From this point forward, we will let $\gamma=\inf_{t>0}\omega_\mathrm{Id}(t)$.

\begin{rem} 
If $\mathrm{Id}\colon (X,\|\cdot\|)\to(X,d)$ is uniformly continuous, then $\gamma=0$.
If, in addition, $d$ is a metric, then the inequality in our definition is trivial, and every nontrivial type will be admissible.
Given our assumption that $d$ is coarsely equivalent to $\|\cdot\|$, we do not need to place any additional conditions on a type to guarantee its defining sequences to be bounded in norm.  Had this not been the case, we would have had to include such a condition in our definition of admissibility.
One condition we could use would be to require a type $\sigma$ to also satisfy the inequality $\sigma(1,0)<\sup_{t<\infty}\rho_\mathrm{Id}(t)$ (a trivial inequality in our case).
In \cite{Raynaud1983}, where the author is concerned with an invariant stable metric $d$ \emph{uniformly} equivalent to the metric induced by $\|\cdot\|$, the author does exactly this.
\end{rem}

At this point, we have established a condition to put on a type to guarantee its defining sequences are bounded in norm and eventually bounded away from zero in norm.
In our goal to obtain a spreading model, we now need an extra condition which will guarantee that a type's defining sequences contain no norm-Cauchy subsequences.

\begin{defi}
We say that a type $\sigma$ is \emph{symmetric} if $\sigma=(-1)\cdot \sigma$, i.e., if $\sigma (\lambda ,x)=\sigma(-\lambda, x)$, for all $(\lambda,x)\in \mathbb{Q}\times \Delta$. Let $\mathcal{S}=\{\sigma\in \mathcal{T}\mid \sigma\text{ is symmetric}\}$ and let $\mathcal{S}_M=\mathcal{S}\cap\mathcal{T}_M$.
\end{defi}

Note that by Lemma \ref{rightcont}, $\mathcal{S}$ is closed, and therefore $\mathcal{S}_M$ is compact for all $M\in\mathbb{R}_+$.

\begin{prop}\label{symadmnonrel}
Say $\sigma\in\mathcal{T}$ is an admissible symmetric type and $(x_n)_{n=1}^\infty$ is a defining sequence for $\sigma$. Then $(x_n)_{n=1}^\infty$ has no $\|\cdot\|$-Cauchy subsequence. 
\end{prop}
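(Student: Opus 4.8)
The plan is to argue by contradiction: suppose $(x_n)_{n=1}^\infty$ has a $\|\cdot\|$-Cauchy subsequence, pass to it, and derive that $\sigma$ fails to be symmetric. So assume $(x_{n_k})_k$ is $\|\cdot\|$-Cauchy; it is then also $d$-Cauchy, since $\mathrm{Id}\colon(X,\|\cdot\|)\to(X,d)$ is coarse (in particular, $\omega_{\mathrm{Id}}(t)\to 0$ is \emph{not} what we have, so I need to be careful here — let me instead use that the subsequence converges in $\|\cdot\|$ to some point, but that point need not lie in $\Delta$). A cleaner route: a $\|\cdot\|$-Cauchy subsequence $(x_{n_k})_k$ converges in $X$ to some $x\in X$, and then for the purpose of computing $\sigma(\lambda,y)=\lim_k d(\lambda x_{n_k},y)$ I want to replace $x_{n_k}$ by $x$. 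This requires continuity of $d$ with respect to $\|\cdot\|$, which follows from $d$ being coarsely equivalent to $\|\cdot\|$ together with $d$ being translation-invariant: translation-invariance plus coarseness of $\mathrm{Id}$ gives $d(a,b)=d(a-b,0)\le \omega_{\mathrm{Id}}(\|a-b\|)$, but $\omega_{\mathrm{Id}}(0^+)$ need not be $0$. This is the crux of the obstacle, and it is exactly where admissibility enters.

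The key point: since $(x_{n_k})_k$ is $\|\cdot\|$-Cauchy, $\|x_{n_k}-x_{n_\ell}\|\to 0$ as $k,\ell\to\infty$, and translation-invariance gives $d(x_{n_k},x_{n_\ell})=d(x_{n_k}-x_{n_\ell},0)\le \omega_{\mathrm{Id}}(\|x_{n_k}-x_{n_\ell}\|)\le \omega_{\mathrm{Id}}(\eps)$ for $\eps$ small, which is at most $\gamma + o(1)$ where $\gamma=\inf_{t>0}\omega_{\mathrm{Id}}(t)$. Similarly, for the relevant value I want to compare $\sigma$ against its reflection: compute $\sigma(1,0)=\lim_k d(x_{n_k},0)$ and $(-1)\cdot\sigma(1,0)=\sigma(-1,0)=\lim_k d(-x_{n_k},0)$. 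Using translation-invariance, $d(-x_{n_k},0)=d(0,x_{n_k})=d(x_{n_k},0)$, so these two agree — that's not yet a contradiction. The actual contradiction comes from evaluating at a nonzero point of $\Delta$ or, better, using that $\sigma\ne\overline 0$ and comparing $\sigma$ with $\sigma\ast((-1)\cdot\sigma)$ near $0$: if $(x_{n_k})$ is Cauchy then $\sigma\ast((-1)\cdot\sigma)$ is realized by the limit of $x_{n_k}-x_{n_k}=0$, so $\sigma\ast((-1)\cdot\sigma)=\overline 0$; but by symmetry $\sigma\ast((-1)\cdot\sigma)=\sigma\ast\sigma$, whose value at $(1,0)$ is $\lim_k\lim_\ell d(x_{n_k}+x_{n_\ell},0)$. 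By Cauchyness this equals $\lim_k d(2x_{n_k},0)$, which is $\ge \liminf_k \rho_{\mathrm{Id}}(\|2x_{n_k}\|)\ge \rho_{\mathrm{Id}}(2\delta')$ for a suitable $\delta'>0$ coming from admissibility (the defining sequence is eventually $\delta$-bounded away from $0$ in norm, as noted in the remark following the definition of admissibility). Since $\mathrm{Id}$ is expanding, $\rho_{\mathrm{Id}}(2\delta')>0$ once $2\delta'$ is beyond the collapsing threshold — and here I should instead just use $\overline\rho$ or the exact expanding bound carefully. So $\sigma\ast\sigma(1,0)>0=\overline 0(1,0)$, contradiction.

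Concretely, the steps I would carry out, in order: (1) assume a $\|\cdot\|$-Cauchy subsequence exists and pass to it, noting (admissibility) that it is eventually $\delta$-bounded away from $0$ in norm for some $\delta>0$; (2) observe via translation-invariance and coarseness of $\mathrm{Id}$ that for this subsequence $d(x_{n_k}-x_{n_\ell},0)\to 0$ is \emph{not} claimed, but rather that $x_{n_k}-x_{n_\ell}\to 0$ in norm, hence the convolution $\sigma\ast((-1)\cdot\sigma)$, which by symmetry equals $\sigma\ast\sigma$, is the type realized by $\lim_{k}\lim_\ell(x_{n_k}-x_{n_\ell})$; since this double limit is $0$ in norm, and $d$ is translation-invariant with $d(u,0)\le\omega_{\mathrm{Id}}(\|u\|)$, we get $\sigma\ast\sigma=\overline 0$ provided $\omega_{\mathrm{Id}}$ has limit $0$ at $0^+$ — which again may fail, so instead I bound $\sigma\ast((-1)\cdot\sigma)(1,0)\le\gamma$; (3) on the other hand estimate $\sigma\ast\sigma(1,0)=\lim_k\lim_\ell d(x_{n_k}+x_{n_\ell},0)=\lim_k d(2x_{n_k},0)\ge\liminf_k\omega_{\mathrm{Id}}(\|2x_{n_k}\|)\ge\sigma(1,0)$, and more sharply, push past $\gamma$: since $\|2x_{n_k}\|\ge 2\delta'$ eventually while $\sigma(1,0)>\gamma$ only gives $\omega_{\mathrm{Id}}(\|x_{n_k}\|)>\gamma$, I use monotonicity of $\omega_{\mathrm{Id}}$ to get $d(2x_{n_k},0)\ge d(x_{n_k},0)\to\sigma(1,0)>\gamma$; wait, that inequality $d(2x_{n_k},0)\ge d(x_{n_k},0)$ is not automatic — here I would instead invoke the triangle inequality in the form $d(2x_{n_k},0)\ge \rho_{\mathrm{Id}}(\|2x_{n_k}\|)$ and pick $\delta'$ via admissibility large enough that $\rho_{\mathrm{Id}}(2\delta')>\gamma$, using that $\mathrm{Id}$ is expanding so $\rho_{\mathrm{Id}}(t)\to\infty$; (4) combine (2) and (3): $\gamma\ge\sigma\ast\sigma(1,0)\ge\rho_{\mathrm{Id}}(2\delta')>\gamma$, a contradiction. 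I expect step (2) — controlling the convolution of near-zero realized types when $d$ is only coarsely (not uniformly) equivalent to the norm — to be the main obstacle, and the remark preceding the proposition (that admissibility forces defining sequences to be eventually bounded away from $0$ in norm, combined with expansiveness pushing $\rho_{\mathrm{Id}}$ above $\gamma$) is precisely the ingredient that resolves it.
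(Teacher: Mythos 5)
Your overall strategy is essentially the paper's, repackaged through convolution: symmetry gives $\sigma\ast((-1)\cdot\sigma)=\sigma\ast\sigma$, Cauchyness of the defining sequence bounds the value of the former at $(1,0)$ by (roughly) $\gamma$, and admissibility is supposed to force the latter above $\gamma$. The first two of these steps are fine. The gap is in steps (3)--(4), where you write that you will ``pick $\delta'$ via admissibility large enough that $\rho_{\mathrm{Id}}(2\delta')>\gamma$, using that $\mathrm{Id}$ is expanding.'' You cannot choose $\delta'$: admissibility hands you one specific $\delta>0$ with $\|x_{n_k}\|\geq \delta$ eventually, and that is the only $\delta'$ available to you. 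The expanding property only says $\rho_{\mathrm{Id}}(t)\to\infty$ as $t\to\infty$; it is perfectly consistent that $\rho_{\mathrm{Id}}$ vanishes on $[0,R]$ for some huge $R$ while $\gamma=\inf_{t>0}\omega_{\mathrm{Id}}(t)$ is strictly positive and $\delta$ is tiny (e.g.\ $\rho_{\mathrm{Id}}(t)=\max(0,t-100)$, $\omega_{\mathrm{Id}}(t)=t+5$, $\delta=10^{-3}$). Then $\rho_{\mathrm{Id}}(2\delta)=0<\gamma$ and your chain $\gamma\geq \sigma\ast\sigma(1,0)\geq\rho_{\mathrm{Id}}(2\delta)>\gamma$ does not close.

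The missing idea is to use the scalar slot of the type, i.e.\ to run the argument for \emph{every} $\lambda\in\mathbb{Q}$ rather than only at $\lambda=1$. The upper estimate is uniform in $\lambda$: for any $\lambda$, translation-invariance and the triangle inequality give $\liminf_n d(\lambda x_n,-\lambda x_n)\leq 2\liminf_n\liminf_m\omega_{\mathrm{Id}}(|\lambda|\,\|x_n-x_m\|)=2\gamma$, because $\|x_n-x_m\|\to 0$ kills the factor $|\lambda|$. The lower estimate, by contrast, scales: $d(2\lambda x_n,0)\geq\rho_{\mathrm{Id}}(2|\lambda|\,\|x_n\|)$, and if the norm-limit $x$ of the Cauchy subsequence were nonzero this would exceed $2\gamma$ for $|\lambda|$ large since $\mathrm{Id}$ is expanding. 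This is exactly how the paper proceeds: it first concludes from the $\lambda$-uniform bound that $x=0$, and only \emph{then} invokes admissibility, via $\gamma<\sigma(1,0)=\lim_n d(x_n,0)\leq\liminf_n\omega_{\mathrm{Id}}(\|x_n\|)=\gamma$, to rule out $x=0$. Your proof can be repaired the same way (replace $\sigma$ by $\lambda\cdot\sigma$ throughout and let $\lambda\to\infty$), but as written the crucial inequality $\rho_{\mathrm{Id}}(2\delta)>\gamma$ is unjustified.
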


\begin{proof}
Suppose to the contrary, that $(x_n)_{n=1}^\infty$ has a $\|\cdot\|$-Cauchy subsequence. After taking this subsequence, we can assume  that $(x_n)_{n=1}^\infty$ converges in norm to some $x\in X$. Then, as $\sigma$ is symmetric, we have that

\begin{align*}
\liminf_{n}d(\lambda x_n,&-\lambda x_n)\\
&= \liminf_{n}\Big(d(\lambda x_n,-\lambda x_n)-\sigma(\lambda,-\lambda x_n)+\sigma(-\lambda,-\lambda x_n)\Big)\\
&= \liminf_{n}\lim_{m}\Big(d(\lambda x_n,-\lambda x_n)-d(\lambda x_m,-\lambda x_n)+d(-\lambda x_m,-\lambda x_n)\Big)\\
&\leq \liminf_{n}\lim_{m}\Big(d(\lambda x_n,\lambda x_m)+d(-\lambda x_m,-\lambda x_n)\Big)\\
&\leq 2\cdot \liminf_{n}\liminf_{m}\omega_\text{Id}(|\lambda|\cdot\|x_n-x_m\|)\\
&= 2 \gamma,
\end{align*}\hfill

\noindent for all $\lambda \in \mathbb{Q}$.
This gives us that $\rho_{\mathrm{Id}}(\|\lambda x\|)\leq\liminf_n\rho_{\mathrm{Id}}(2\|\lambda x_n\|)\leq 2\gamma$, for all $\lambda\in \Q$. As $d$ is coarsely equivalent to the norm of $X$, this can only happen if $x=0$. But then the admissibility of $\sigma$ yields

\[\gamma <\sigma(1,0)=\lim_{n}d(x_n,0)\leq \liminf_{n} \omega_\mathrm{Id}(\|x_n\|)= \gamma,\]\hfill

\noindent a contradiction.
\end{proof}

 \section{Conic classes.}\label{SectionConicClass}
 
 To show the existence of a type that satisfies an $\ell_p$-inequality, we will use a limiting argument and the existence of a shared point of continuity for every finite combination of convolutions and dilations by a scalar.  The definition of conic class below is motivated by the desire to use Lemma \ref{bairelemma2} with the Baire category theorem to find a shared point of continuity, and the need for a minimality argument to make sure this point can be used in the limiting argument.

 \begin{defi}
 A nonempty subset $\mathcal{C}$ of $\mathcal{S}$  is called a \emph{conic class} if \\
 
 \begin{enumerate}[(i)]
 \item $\mathcal{C}\neq\{\overline{0}\}$,
 \item  $\lambda\cdot \sigma \in \mathcal{C}$ for all $\lambda\in \Q$ and $\sigma\in \mathcal{C}$, and
 \item $\sigma *\tau\in \mathcal{C}$ for all $\sigma,\tau\in \mathcal{C}$.
 \end{enumerate}\hfill
 
 \noindent Moreover,  $\mathcal{C}$ is called \emph{admissible} if $\mathcal{C}$ contains an admissible type, i.e., if there exists $\sigma \in \mathcal{C}$ such that $\sigma(1,0)>\gamma$. 
 \end{defi}

 \begin{lemma}
 The set $\mathcal{S}$ is a closed admissible conic class.
 \end{lemma}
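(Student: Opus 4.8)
The statement has three parts: $\mathcal{S}$ is closed, $\mathcal{S}$ is a conic class, and $\mathcal{S}$ is admissible. The closedness of $\mathcal{S}$ is already noted immediately after the definition of symmetric type (it follows from the right-continuity of dilation, Lemma \ref{rightcont}, since $\mathcal{S}$ is the equalizer of $\sigma\mapsto\sigma$ and $\sigma\mapsto(-1)\cdot\sigma$). So the real work is checking the three conic-class axioms and admissibility.

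First I would verify the three axioms (i)--(iii) of conic class for $\mathcal{S}$. For (i), $\mathcal{S}\neq\{\overline 0\}$: since $X$ is infinite-dimensional and $d$ is coarsely equivalent to $\|\cdot\|$, $X$ is $d$-unbounded, so $\mathcal{T}$ contains non-null types; one then produces a \emph{symmetric} non-null type, e.g.\ by taking a defining sequence $(x_n)$ going to infinity in norm along which $(\overline{x_n})$ converges, and symmetrizing via the convolution $\overline{x_n}\ast\overline{(-x_n)}$ after passing to a spread-out subsequence — or more simply, observe that for any $\sigma\in\mathcal{T}$, $\sigma\ast((-1)\cdot\sigma)$ is symmetric (commutativity of convolution and distributivity of dilation over convolution, both noted in the excerpt), and choosing $\sigma$ admissible this convolution is non-null. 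For (ii), closure of $\mathcal{S}$ under dilation: if $\sigma$ is symmetric then $(-1)\cdot(\lambda\cdot\sigma)=\lambda\cdot((-1)\cdot\sigma)=\lambda\cdot\sigma$, using that dilation is a (multiplicative) action of $\Q$, i.e. $(\alpha\beta)\cdot\sigma=\alpha\cdot(\beta\cdot\sigma)$, which is immediate from $\alpha\cdot\sigma(\lambda,x)=\sigma(\lambda\alpha,x)$. For (iii), closure under convolution: $(-1)\cdot(\sigma\ast\tau)=((-1)\cdot\sigma)\ast((-1)\cdot\tau)=\sigma\ast\tau$ when both $\sigma,\tau$ are symmetric, again using the noted distributivity of dilation over convolution.

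Next, admissibility: I need a single $\sigma\in\mathcal{S}$ with $\sigma(1,0)>\gamma=\inf_{t>0}\omega_{\mathrm{Id}}(t)$. Take any sequence $(x_n)$ in $\Delta$ with $\|x_n\|\to\infty$; since $\mathrm{Id}$ is expanding (coarse equivalence), $d(x_n,0)\geq\rho_{\mathrm{Id}}(\|x_n\|)\to\infty$, so by passing to a subsequence along which $(\overline{x_n})$ converges in the compact-in-each-$\mathcal{T}_M$ sense — actually one passes to a subsequence on which $d(x_n,0)$ converges to some large value and then extracts a convergent subsequence of $(\overline{x_n})$ in $\mathcal{T}$, using that each $\mathcal{T}_M$ is compact (Lemma \ref{T_Mcompact}) — one gets an admissible type $\sigma$. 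Then the symmetric type $\sigma\ast((-1)\cdot\sigma)$ satisfies $(\sigma\ast((-1)\cdot\sigma))(1,0)=\lim_n\lim_m d(x_n-x_m,0)$; I need this to exceed $\gamma$. Since $\|x_n-x_m\|\to\infty$ as $m\to\infty$ (for fixed large $n$, after a further diagonalization ensuring the $x_n$ are spread out in norm), this limit is at least $\liminf_{n,m}\rho_{\mathrm{Id}}(\|x_n-x_m\|)$, which can be made arbitrarily large, in particular $>\gamma$. This symmetric non-null admissible type then witnesses both (i) and admissibility simultaneously.

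\textbf{The main obstacle.} The technical heart is the admissibility/non-triviality argument: producing a \emph{symmetric} type whose value at $(1,0)$ strictly exceeds $\gamma$, since a single admissible type need not be symmetric and naive symmetrization (convolving $\sigma$ with $(-1)\cdot\sigma$) could in principle lose the norm-size guarantee if the defining sequence were not spread out. The fix is to choose the defining sequence of $\sigma$ to go to infinity fast enough in norm that differences $x_n-x_m$ also go to infinity in norm, which keeps the lower bound via $\rho_{\mathrm{Id}}$ in force; this is routine once set up correctly but requires care with the order of the iterated limits defining convolution. Everything else — closedness and axioms (ii), (iii) — is a short formal computation from the algebraic identities for dilation and convolution already recorded in the excerpt.
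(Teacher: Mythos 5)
Your treatment of closedness and of axioms (ii) and (iii) is correct and matches the paper: closedness comes from Lemma \ref{rightcont}, and the two algebraic axioms are formal consequences of $\alpha\cdot\sigma(\lambda,x)=\sigma(\lambda\alpha,x)$ and of the distributivity of dilation over convolution. The gap is in the construction of the admissible symmetric type. You propose to start from a sequence $(x_n)$ in $\Delta$ with $\|x_n\|\to\infty$ and extract a subsequence along which $(\overline{x}_n)$ converges to a type. This cannot work: since $\mathrm{Id}\colon(X,\|\cdot\|)\to(X,d)$ is expanding, $d(x_n,0)\geq\rho_{\mathrm{Id}}(\|x_n\|)\to\infty$, so \emph{no} subsequence of $(\overline{x}_n)$ converges in $\mathcal{T}$ (the value at $(1,0)$ would have to be $+\infty$, and types are real-valued; the paper records explicitly that every defining sequence of a type is $d$-bounded and hence norm-bounded). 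Your "more simply" alternative — take any admissible $\sigma$ and use $\sigma\ast(-1)\cdot\sigma$ — also fails as stated: if $\sigma=\overline{x}$ is realized by a constant sequence with $d(x,0)>\gamma$, then $\sigma$ is admissible but $\sigma\ast(-1)\cdot\sigma=\overline{0}$. You correctly diagnose in your last paragraph that the defining sequence must be spread out, but your proposed fix (let it go to infinity in norm) destroys the type.

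The correct repair, which is the paper's argument, keeps the sequence \emph{bounded} but separated: choose $R<\infty$ with $\rho_{\mathrm{Id}}(t)>\gamma$ for $t\geq R$ (possible since $\mathrm{Id}$ is expanding), and use infinite-dimensionality to produce a bounded $R$-separated sequence $(x_n)$ (in $\Delta$, after a small perturbation). Boundedness lets you pass to a subsequence defining some $\sigma\in\mathcal{T}$ (using compactness of the sets $\mathcal{T}_M$), and then
\[
\bigl(\sigma\ast(-1)\cdot\sigma\bigr)(1,0)=\lim_n\lim_m d(x_n-x_m,0)\geq\inf_{n\neq m}d(x_n-x_m,0)\geq\rho_{\mathrm{Id}}(R)>\gamma ,
\]
so the symmetric type $\sigma\ast(-1)\cdot\sigma$ is admissible (and in particular non-null), which settles axiom (i) and admissibility at once. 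Note that $\sigma$ itself need not be admissible here; only the separation of its defining sequence matters.
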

 
 \begin{proof}
 That $\mathcal{S}$ is closed follows from Lemma \ref{rightcont}.  The properties (ii) and (iii) follow easily from the definitions of dilation and convolution and from the invariance of $d$.
 All that remains is to show that there is an admissible (and therefore nontrivial) type $\sigma$ in $\mathcal{S}$.
 Let $R<\infty$ be such that $\rho_{\mathrm{Id}}(t)>\gamma$ whenever $t\geq R$.
 By the infinite-dimensionality of $X$, there is a bounded $R$-separated sequence $(x_n)_{n=1}^\infty$ in $(X,\|\cdot\|)$.
 After possibly taking a subsequence, we may suppose that $(x_n)_{n=1}^\infty$ is a defining sequence for some $\sigma\in \mathcal{T}$.
 In this case,
 
 \begin{align*}
 (\sigma \ast (-1)\cdot \sigma) (1,0) &= \lim_{n}\lim_{m}d(x_n-x_m,0)\\
 &\geq \inf_{n\neq m}d(x_n-x_m,0)\\
 &\geq \rho_{\mathrm{Id}}(R)\\
 &> \gamma
 \end{align*}\hfill
 
\noindent That is, the symmetric type $\sigma \ast (-1)\cdot \sigma$ is admissible.
 Therefore $\mathcal{S}$ is a closed admissible conic class.
 \end{proof}

 Now that the existence of a closed admissible conic class has been shown, we will show the existence of one that is minimal (with respect to set inclusion), using the following lemma.
 
 \begin{lemma}
 \label{minimallemma}
 Let $\sigma$ be an admissible type.  Given any $0\leq r_1<r_2$, there is $\alpha\in \mathbb{Q}_+$ such that $\rho_{\mathrm{Id}}(r_1)\leq \alpha \cdot \sigma(1,0)\leq \omega_{\mathrm{Id}}(r_2)$.
 \end{lemma}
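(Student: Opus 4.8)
The plan is to transfer the problem back to the norm of $X$ via the moduli $\rho_{\mathrm{Id}}$ and $\omega_{\mathrm{Id}}$ of the identity map $\mathrm{Id}\colon (X,\|\cdot\|)\to(X,d)$. Both moduli are non-decreasing in $t$, and for every $z\in X$ one has $\rho_{\mathrm{Id}}(\|z\|)\le d(z,0)\le \omega_{\mathrm{Id}}(\|z\|)$, immediately from the definitions of $\rho$ and $\omega$ applied to the pair $(z,0)$. The bridge to the type is the identity $\alpha\cdot\sigma(1,0)=\sigma(\alpha,0)=\lim_{n}d(\alpha x_n,0)$, valid for every $\alpha\in\Q_+$ and every defining sequence $(x_n)_{n=1}^\infty$ of $\sigma$.

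First I would fix a defining sequence $(x_n)_{n=1}^\infty$ for $\sigma$. Since $(x_n)_{n=1}^\infty$ is $d$-bounded and $\mathrm{Id}$ is expanding, it is $\|\cdot\|$-bounded; and since $\sigma$ is admissible, the observation immediately following the definition of admissibility yields $\delta>0$ with $\|x_n\|\ge \delta$ for all large $n$. The key move is to pass to a subsequence: a subsequence of a defining sequence is again a defining sequence for $\sigma$ (it still converges to $\sigma$ pointwise on $\Q\times\Delta$), so by Bolzano--Weierstrass I may assume $\|x_n\|\to L$ for some $L\ge\delta$, and in particular $L>0$.

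With $L>0$ in hand, choose $\alpha\in\Q_+$ with $r_1<\alpha L<r_2$ — possible since $(r_1/L,\,r_2/L)$ is a nonempty open interval, hence contains a rational — and then $\varepsilon>0$ small enough that $r_1\le \alpha(L-\varepsilon)$ and $\alpha(L+\varepsilon)\le r_2$. For all large $n$ we then have $\|\alpha x_n\|=\alpha\|x_n\|\in[r_1,r_2]$, so by monotonicity of the moduli $\rho_{\mathrm{Id}}(r_1)\le \rho_{\mathrm{Id}}(\|\alpha x_n\|)\le d(\alpha x_n,0)\le \omega_{\mathrm{Id}}(\|\alpha x_n\|)\le \omega_{\mathrm{Id}}(r_2)$. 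Letting $n\to\infty$ and using $d(\alpha x_n,0)\to\alpha\cdot\sigma(1,0)$ gives $\rho_{\mathrm{Id}}(r_1)\le \alpha\cdot\sigma(1,0)\le \omega_{\mathrm{Id}}(r_2)$, as required.

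The main obstacle is recognizing that the first estimate one is tempted to write, namely $\rho_{\mathrm{Id}}(\alpha\delta)\le \alpha\cdot\sigma(1,0)\le \omega_{\mathrm{Id}}(\alpha K)$ with $\delta\le\|x_n\|\le K$, is useless here: the gap between $\rho_{\mathrm{Id}}(\alpha\delta)$ and $\omega_{\mathrm{Id}}(\alpha K)$ is in general far too wide to fit inside $[\rho_{\mathrm{Id}}(r_1),\omega_{\mathrm{Id}}(r_2)]$, because $d(\cdot,0)$ need not be monotone along rays and a defining sequence need not have norm-convergent terms. Collapsing $\delta$ and $K$ to a common value $L$ by passing to a subsequence is exactly what removes this obstruction; everything else is bookkeeping.
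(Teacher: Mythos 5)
Your argument is correct and is essentially the paper's own proof: both pass to a subsequence of a defining sequence so that $\|x_n\|$ converges to a nonzero limit $L$, pick a rational $\alpha$ with $\alpha L\in(r_1,r_2)$, and then sandwich $\alpha\cdot\sigma(1,0)=\lim_n d(\alpha x_n,0)$ between $\rho_{\mathrm{Id}}(r_1)$ and $\omega_{\mathrm{Id}}(r_2)$ using the monotonicity of the moduli. Your version just spells out the bookkeeping that the paper leaves implicit.
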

 
  \begin{proof}
  Let $(x_n)_{n=1}^\infty$ be a defining sequence for $\sigma$.  
  The admissibility of $\sigma$ implies that $(x_n)_{n=1}^\infty$ is a $\|\cdot\|$-bounded sequence which is eventually $\|\cdot\|$-bounded away from 0.  
  Thus, we may suppose after possibly taking a subsequence that $\lim_{n}\|x_n\|$ exists and is nonzero.
   Let $\alpha\in \mathbb{Q}_+$ be such that $\lim_{n}\|\alpha x_n\| \in [r_1, r_2]$.
   As $\alpha\cdot \sigma(1,0)=\lim_{n}d(\alpha x_n,0)$, we then have 

   \[\rho_{\mathrm{Id}}(r_1)\leq \alpha \cdot \sigma(1,0)\leq \omega_{\mathrm{Id}}(r_2).\]
  \end{proof}

 \begin{prop}\label{minimal}
 Every closed admissible conic class contains a minimal closed admissible conic class.
 \end{prop}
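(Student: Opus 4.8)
The plan is to apply Zorn's lemma to the collection of closed admissible conic classes contained in a fixed closed admissible conic class $\mathcal{C}_0$, ordered by reverse inclusion, and to verify that every chain has an upper bound, i.e., that the intersection of a decreasing chain of closed admissible conic classes is again a closed admissible conic class. Closure under the algebraic operations (properties (ii), (iii) of the definition) and topological closedness pass trivially to intersections, so the only real issue is to check that the intersection is nonempty, is not $\{\overline 0\}$, and remains admissible — that is, that it still contains a type $\sigma$ with $\sigma(1,0)>\gamma$.

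The key observation is a compactness argument, making essential use of Lemma \ref{T_Mcompact} and Lemma \ref{minimallemma}. First I would note that, by Lemma \ref{minimallemma}, every closed admissible conic class $\mathcal{C}$ contains, for a suitable $\alpha\in\Q_+$, a type $\alpha\cdot\sigma$ with $\alpha\cdot\sigma(1,0)$ lying in a fixed compact interval bounded strictly above $\gamma$: concretely, fix once and for all reals $\gamma<r_1<r_2$ with $\rho_{\mathrm{Id}}(r_1)>\gamma$ and set $M=\omega_{\mathrm{Id}}(r_2)$; then Lemma \ref{minimallemma} applied to any admissible $\sigma\in\mathcal{C}$ produces $\alpha$ with $\rho_{\mathrm{Id}}(r_1)\le \alpha\cdot\sigma(1,0)\le M$, and $\alpha\cdot\sigma\in\mathcal{C}$ by property (ii). Hence the set
\[
\mathcal{C}^{\mathrm{adm}}\coloneqq\{\sigma\in\mathcal{C}\mid \rho_{\mathrm{Id}}(r_1)\le \sigma(1,0)\le M\}
\]
is a nonempty closed subset of the compact set $\mathcal{S}_M=\mathcal{S}\cap\mathcal{T}_M$ (compact by Lemma \ref{T_Mcompact} together with the remark that $\mathcal{S}$ is closed), and every element of $\mathcal{C}^{\mathrm{adm}}$ is admissible since $\rho_{\mathrm{Id}}(r_1)>\gamma$. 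Now given a decreasing chain $(\mathcal{C}_i)_{i\in I}$ of closed admissible conic classes, the sets $(\mathcal{C}_i^{\mathrm{adm}})_{i\in I}$ form a decreasing chain of nonempty closed subsets of the compact space $\mathcal{S}_M$, so by the finite intersection property $\bigcap_{i\in I}\mathcal{C}_i^{\mathrm{adm}}\neq\emptyset$. Any $\sigma$ in this intersection lies in $\mathcal{C}\coloneqq\bigcap_{i\in I}\mathcal{C}_i$, is admissible (so $\mathcal{C}\neq\{\overline 0\}$, giving property (i)), and $\mathcal{C}$ inherits closedness and properties (ii), (iii). Thus $\mathcal{C}$ is a closed admissible conic class contained in every $\mathcal{C}_i$, i.e., an upper bound for the chain in our ordering.

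With the chain condition verified, Zorn's lemma yields a maximal element in the reverse-inclusion order, which is precisely a minimal (with respect to set inclusion) closed admissible conic class contained in $\mathcal{C}_0$; applying this with $\mathcal{C}_0=\mathcal{S}$ (shown to be a closed admissible conic class in the preceding lemma) completes the proof. The main obstacle, and the only non-formal point, is the nonemptiness of the intersection of the chain: one must avoid the naive attempt to intersect the $\mathcal{C}_i$ directly (which could a priori collapse to $\{\overline 0\}$, or lose admissibility) and instead pin the admissible witnesses down inside a single compact set via Lemma \ref{minimallemma} before taking the intersection — this is exactly the role the minimality lemma was designed to play.
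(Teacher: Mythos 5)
Your proposal is correct and follows essentially the same route as the paper: Zorn's lemma on reverse inclusion, with the nonemptiness/admissibility of a chain's intersection secured by using Lemma \ref{minimallemma} to rescale an admissible witness in each member of the chain into a fixed compact set of the form $\{\sigma \mid \rho_{\mathrm{Id}}(r_1)\le\sigma(1,0)\le\omega_{\mathrm{Id}}(r_2)\}$ with $\rho_{\mathrm{Id}}(r_1)>\gamma$, and then invoking the finite intersection property. The paper's $\mathcal{B}_i=\mathcal{C}_i\cap(\mathcal{T}_{\omega_{\mathrm{Id}}(R+1)}\setminus\mathrm{int}\,\mathcal{T}_{\rho_{\mathrm{Id}}(R)})$ is the same device as your $\mathcal{C}_i^{\mathrm{adm}}$.
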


 \begin{proof}
 Fix a closed admissible conic class $\mathcal{C}$. 
 Let $\mathcal{F}$ be the family of closed admissible conic classes contained in $\mathcal{C}$ ordered by reverse set inclusion and let $\{\mathcal{C}_i\}_{i\in I}$ be some chain in $\mathcal{F}$.\\
 
\noindent  \textbf{Claim:} $\bigcap_{i\in I} \mathcal{C}_i$ is a closed admissible conic class.\\
 
 Certainly, $\bigcap_{i\in I} \mathcal{C}_i\subseteq \mathcal{S}$ is closed and satisfies conditions (ii) and (iii) in the definition of conic class. So we only need to show that $\bigcap_{i\in I} \mathcal{C}_i$ contains an admissible type. For that, fix $R<\infty$ such that $\rho_{\mathrm{Id}}(t)>\gamma$ whenever $t\geq R$ and let $\mathcal{B}_i=C_i\cap (\mathcal{T}_{\omega_\text{Id}(R+1)}\setminus \mathrm{int}\mathcal{T}_{\rho_\text{Id}(R)})$ for all $i\in I$. By Lemma \ref{T_Mcompact}, $\mathcal{B}_i$ is compact. Given $i\in I$, let $\sigma_i\in \mathcal{C}_i$ be admissible. By the previous lemma, there is $\alpha_i\in \mathbb{Q}_+$ such that $\alpha_i\cdot \sigma_i\in\mathcal{B}_i$, so $\mathcal{B}_i$ is nonempty. Hence, $\{\mathcal{B}_i\}_{i\in I}$ is a family of compact sets with the finite intersection property, which gives us that  $\bigcap_{i\in I} \mathcal{B}_i\subseteq \bigcap_{i\in I}\mathcal{C}_i$ is nonempty. By our choice of $R$, $\bigcap_{i\in I}\mathcal{B}_i$ can only contain admissible types, hence $\bigcap_{i\in I}\mathcal{C}_i$ contains an admissible type, and the claim is proved.
 
As $\bigcap_{i\in I} \mathcal{C}_i$ is a closed admissible conic class, it is an upper bound for the chain $\{\mathcal{C}_i\}_{i\in I}$ in $\mathcal{F}$. By Zorn's lemma,  $\mathcal{F}$ has a maximal element. That is, $\mathcal{C}$ contains a minimal closed admissible conic class.
\end{proof}
 
 We come now to the main result of this section.  For Raynaud, it was enough to show that the maps $\sigma\mapsto \sigma \ast \alpha\cdot \sigma$, where $\alpha$ is any scalar, share a point of continuity.  He then uses this to show $\sigma \ast \alpha \cdot \sigma = (1+|\alpha|^p)^{1/p}\cdot \sigma$ for some $p\in [1,\infty )$.  With this equality, one may then easily show that for any finite sequence of scalars $\overline{\alpha}=(\alpha_j)_{j=1}^N$, one has $\bigast_{j=1}^N\alpha_j\cdot \sigma=\|\overline{\alpha}\|_p\cdot \sigma$.  In our case however, we will only be able to show that given any $(t_m)_{m=1}^\infty\subseteq \mathbb{Q}$ converging to $\|\overline{\alpha}\|_p$ and $(\lambda, x)\in \mathbb{Q}\times \Delta$, $\limsup_m|\bigast_{j=1}^N\alpha_j\cdot \sigma(\lambda,x)-t_m\cdot \sigma (\lambda,x)|\leq L$, for some constant $L$ depending on $\gamma$.  The next lemma will allow us to make sure $L$ does not depend on the length of $\overline{\alpha}$.
 
\begin{lemma}
\label{commonpoint}
Let $\mathcal{C}$ be a closed admissible conic class.  Then there is an admissible $\phi\in \mathcal{C}$ such that $\phi$ is a common point of continuity for the family of functions $\{\sigma \mapsto \bigast_{j=1}^m \alpha_j\cdot \sigma\ |\ \overline{\alpha}\in \mathbb{Q}^{<\mathbb{N}}\}\subseteq \mathcal{C}^{\mathcal{C}}$.
\end{lemma}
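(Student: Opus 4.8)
The plan is to exhibit each map $\sigma \mapsto \bigast_{j=1}^m \alpha_j \cdot \sigma$ as a member of one of the recursively generated families $\mathcal{F}^{[k]}$ attached to a carefully chosen countable family $\mathcal{F}$ of Baire class $1$ self-maps of $\mathcal{C} \times \mathcal{C}$, then invoke Lemma \ref{bairelemma2} to obtain a comeager $G_\delta$ set of common continuity points inside $\mathcal{C}$, and finally use the Baire category theorem to locate such a point among the admissible types, which form a nonempty open subset of $\mathcal{C}$ precisely because $\mathcal{C}$ is an admissible conic class.

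First I would, for each $\alpha \in \mathbb{Q}$, define $f_\alpha, h_\alpha \colon \mathcal{C} \times \mathcal{C} \to \mathcal{C}$ by $f_\alpha(\sigma,\tau) = \alpha \cdot \sigma \ast \tau$ and $h_\alpha(\sigma,\tau) = \alpha \cdot \sigma$; conditions (ii) and (iii) in the definition of a conic class ensure these land in $\mathcal{C}$. Set $\mathcal{F} = \{f_\alpha \mid \alpha \in \mathbb{Q}\} \cup \{h_\alpha \mid \alpha \in \mathbb{Q}\}$, which is countable. Each $h_\alpha$ is continuous, being the first-coordinate projection followed by the map $\sigma \mapsto \alpha\cdot\sigma$ (continuous by Lemma \ref{rightcont}). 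Each $f_\alpha$ is the composition of the continuous map $(\sigma,\tau)\mapsto(\alpha\cdot\sigma,\tau)$ with convolution; convolution restricted to $\mathcal{C}\times\mathcal{C}$, with codomain cut down to $\mathcal{C}$, is Baire class $1$ by Corollary \ref{conv_is_baire} (restriction preserves Baire class $1$, and shrinking the codomain around the image does too), and precomposing a Baire class $1$ map with a continuous one again gives a Baire class $1$ map, so each $f_\alpha$ is Baire class $1$. Hence $\mathcal{F}$ is a countable family of Baire class $1$ maps from $\mathcal{C}\times\mathcal{C}$ to $\mathcal{C}$.

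Next I would prove that $G_{\overline{\alpha}} := \big(\sigma \mapsto \bigast_{j=1}^m \alpha_j\cdot\sigma\big) \in \bigcup_{k\ge 0}\mathcal{F}^{[k]}$ for every $\overline{\alpha} = (\alpha_1,\dots,\alpha_m) \in \mathbb{Q}^{<\mathbb{N}}$, by induction on $m$. For $m=1$, $G_{(\alpha_1)}(\sigma) = \alpha_1\cdot\sigma = h_{\alpha_1}(\sigma,\sigma) \in \mathcal{F}^{[1]}$. For the step, using associativity and commutativity of convolution, $G_{(\alpha_1,\dots,\alpha_{m+1})}(\sigma) = \alpha_1\cdot\sigma \ast G_{(\alpha_2,\dots,\alpha_{m+1})}(\sigma) = f_{\alpha_1}\big(\sigma,\, G_{(\alpha_2,\dots,\alpha_{m+1})}(\sigma)\big) \in \mathcal{F}^{[m+1]}$, since $f_{\alpha_1}\in\mathcal{F}$ and $G_{(\alpha_2,\dots,\alpha_{m+1})}\in\mathcal{F}^{[m]}$ by hypothesis. (The maps $h_\alpha$ are included precisely so the recursion can start from an arbitrary dilation $\alpha_m\cdot\sigma$ rather than from $\sigma$ itself, leaving the last entry of $\overline{\alpha}$ free.) Then I would apply Lemma \ref{bairelemma2} to the separable metric space $\mathcal{C}$ and the family $\mathcal{F}$ to get a comeager $G_\delta$ set $E\subseteq\mathcal{C}$ on which every $g\in\bigcup_{k\ge1}\mathcal{F}^{[k]}$ is continuous; since $\mathcal{F}^{[0]}=\{\mathrm{id}\}$ is continuous, every $G_{\overline\alpha}$ with $\overline\alpha\in\mathbb{Q}^{<\mathbb{N}}$ is continuous at each point of $E$. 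Finally, $\mathcal{C}$ is a closed subspace of the completely metrizable space $\mathbb{R}^{\mathbb{Q}\times\Delta}$, hence a Baire space, so the comeager set $E$ is dense in $\mathcal{C}$; and $\{\sigma\in\mathcal{C}\mid\sigma(1,0)>\gamma\}$ is open (evaluation at $(1,0)$ is continuous) and nonempty (admissibility of $\mathcal{C}$), so it contains a point $\phi$ of $E$, which is the desired admissible common point of continuity.

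The part demanding the most care is the bookkeeping in the two middle paragraphs: choosing $\mathcal{F}$ so that the recursively built families $\mathcal{F}^{[k]}$ reproduce exactly the convolution–dilation ``words'' $\bigast_{j=1}^m \alpha_j\cdot\sigma$ — in particular remembering to include the pure-dilation maps $h_\alpha$ so the recursion need not terminate at a trivial dilation — and checking that Baire class $1$-ness genuinely survives restriction to $\mathcal{C}\times\mathcal{C}$, shrinking the codomain to $\mathcal{C}$, and precomposition with the (fixed-$\alpha$) dilation map. Once $\mathcal{F}$ is set up correctly, the appeal to Lemma \ref{bairelemma2} and the concluding Baire-category step are routine.
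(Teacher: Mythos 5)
Your proposal is correct and follows essentially the same route as the paper: apply Lemma \ref{bairelemma2} to a countable family of Baire class 1 maps built from convolution and dilation, then use the Baire category theorem together with the openness and nonemptiness of $\{\sigma\in\mathcal{C}\mid \sigma(1,0)>\gamma\}$ to find an admissible common continuity point. The only differences are cosmetic: you spell out the induction showing each $\sigma\mapsto\bigast_{j=1}^m\alpha_j\cdot\sigma$ lies in some $\mathcal{F}^{[k]}$ (the paper leaves this implicit with the family $\{(\sigma,\tau)\mapsto\alpha\cdot\sigma\ast\beta\cdot\tau\}$), and you derive the Baire property of $\mathcal{C}$ from complete metrizability rather than from local compactness via Corollary \ref{TM_locallycompact}.
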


\begin{proof}
By   Lemma \ref{bairelemma2} and Corollary \ref{conv_is_baire} (with $X=\mathcal{C}$ and $\mathcal{F}=\{\sigma\mapsto \alpha\cdot \sigma \ast \beta \cdot \sigma\ |\ \alpha, \beta\in \mathbb{Q}\}$), there is a comeager $G_\delta$ subset $E$ of $\mathcal{C}$ such that $g$ is continuous on $E$ for all $g\in \{\sigma \mapsto \bigast_{j=1}^n \alpha_j\cdot \sigma\ |\ \overline{\alpha}\in \mathbb{Q}^{<\mathbb{N}}\}\subseteq \mathcal{C}^{\mathcal{C}}$.
But $\mathcal{C}$ is closed, and so is locally compact, by Corollary \ref{TM_locallycompact}.
Therefore $E$ is dense in $\mathcal{C}$, by the Baire category theorem, and the statement follows by the admissibility of $\mathcal{C}$.
\end{proof}

\section{Model  associated to an admissible symmetric  type.}\label{SectionModel}

Let $\sigma$ be an admissible symmetric type and  $(x_n)_{n=1}^\infty$ be a defining sequence for $\sigma$.  Then the sequence $(x_n)_{n=1}^\infty$ is bounded, and by Proposition \ref{symadmnonrel}, has no $\|\cdot\|$-Cauchy subsequence. Thus, given a nonprincipal ultrafilter $\mathcal{U}$ on $\N$, we may define a spreading sequence $(\zeta_n)_{n=1}^\infty$ and a spreading model  $S=X\oplus \overline{\text{span}}\{\zeta_n\mid n\in\N\}$ associated to $(x_n)_{n=1}^\infty$ and $\mathcal{U}$ as in Subsection \ref{SubsectionSpreading}.  As in Subsection \ref{SubsectionSpreading}, we let  $(\xi_n)_{n=1}^\infty$ be given by  $\xi_n=\zeta_{2n-1}-\zeta_{2n}$, for all $n\in\N$. 

Let $\tau=\sigma \ast (-1)\cdot \sigma$.
As $\sigma=\lim_n \overline{x}_n$, we may assume after taking a subsequence that $\tau=\lim_n \overline{y}_n$ where $y_n=x_{2n-1}-x_{2n}$.
As $(x_n)_{n=1}^\infty$ has no $\|\cdot\|$-Cauchy subsequence, we may further assume after taking another subsequence that $\inf_{n\neq m}\|x_n-x_m\|>0$. As $\tau(1,0)=\lim_n d(y_n,0)\geq \rho_{\mathrm{Id}}(\inf_{n\neq m}\|x_n-x_m\|)$, by dilating $\sigma$, we can also assume that $\tau$ is an admissible type.  It is clear that $(\xi_n)_{n=1}^\infty$ is the spreading model of $(y_n)_{n=1}^\infty$ for the ultrafilter $\mathcal{U}$.

From this point forward, we fix a minimal closed admissible conic class $\mathcal{C}$ and an admissible $\phi\in \mathcal{C}$ that is a common point of continuity for the family of functions $\mathcal{F}= \{\sigma \mapsto \bigast_{j=1}^n \alpha_j\cdot \sigma\ |\ n\in \mathbb{N}, \alpha\in \mathbb{Q}^n\}\subseteq \mathcal{C}^{\mathcal{C}}$ such that $\psi=\phi\ast (-1)\cdot \phi$ is admissible (this is possible by Lemma \ref{commonpoint}).
We also fix a defining sequence $(x_n)_{n=1}^\infty$ for $\phi$ with unique (see Section \ref{SubsectionSpreading}) spreading model $(S,\tn\cdot\tn)$ such that $y_n=x_{2n-1}-x_{2n}$ is a defining sequence for $\psi$.
We will let $(\zeta_n)_{n=1}^\infty$ be the spreading sequence associated with $S$ and $\xi_n=\zeta_{2n-1}-\zeta_{2n}$ for every $n\in\mathbb{N}$.

\begin{defi}
Given $(\alpha_j)_{j=1}^m \in \mathbb{Q}^{<\mathbb{N}}$, we say that $\sum_{j=1}^m\alpha_j\zeta_j$ \emph{realizes} the type $\bigast_{j=1}^m\alpha_j \cdot \phi$. 
\end{defi}

\begin{remark}\label{remrealizes}
Notice that, if $u=\sum_{j=1}^{m_1}\alpha_j\zeta_j$ realizes $\sigma$, and $v=\sum_{j=m_1+1}^{m_2}\beta_j\zeta_j$ realizes $\tau$, it follows that $u+v$ realizes $\sigma*\tau$.
\end{remark}

\subsection{Basic properties of  $\tn \cdot\tn $.} We will now prove some technical lemmas which will be important in the proof of the main theorem of these notes.  The main lemma here is Lemma \ref{N(u-v)ext2}, which will allow us to derive inequalities involving the type $\psi$.

\begin{lemma}\label{N(u-v)}
Say $u\neq v\in \mathrm{span}_\mathbb{Q}\{\zeta_n\mid n\in\N\}$ realize $\sigma$ and $\tau$, respectively.  Then for every $(\lambda, x)\in \mathbb{Q}\times \Delta$,

\[\sup_{0<\eps\leq |\lambda|\tn  u- v\tn}\rho_\mathrm{Id}(|\lambda |\tn  u- v\tn -\eps )\leq\sigma(\lambda,x)+\tau(\lambda,x)\]\hfill

\noindent and

\[|\sigma(\lambda,x)-\tau(\lambda,x)|\leq \inf_{\eps>0}\omega_\mathrm{Id}(|\lambda| \tn u-v \tn +\eps).\]\hfill

\noindent In particular, we have for each $\delta>0$ that 
 \begin{enumerate}[(i)] 
  \item $\tn  u\tn  >\delta$ implies $\sigma(1,0)\geq \rho_\mathrm{Id}(\delta)$, and
  \item $\sigma(1,0)>\omega_{\mathrm{Id}}(\delta)$ implies $\tn  u\tn  \geq\delta$.
 \end{enumerate}

\end{lemma}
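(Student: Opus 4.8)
The statement relates the norm $\tn u - v\tn$ in the spreading model to the values of the types $\sigma$ and $\tau$ realized by $u$ and $v$. The natural approach is to unwind the definition of the spreading model and of the types in terms of the defining sequences, and then use that the identity map $\mathrm{Id}\colon (X,\|\cdot\|)\to (X,d)$ is a coarse equivalence (so $\rho_{\mathrm{Id}}$ and $\omega_{\mathrm{Id}}$ control $d$ from below and above). Write $u = \sum_{j=1}^{m_1}a_j\zeta_j$ and $v = \sum_{j=m_1+1}^{m_2}b_j\zeta_j$ (after relabeling indices and padding with zeros, we may assume $u$ and $v$ have disjoint supports among the $\zeta_j$, since they are distinct; actually one should be a bit careful here — see the obstacle below). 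Since $(\zeta_n)$ is the spreading sequence associated to the defining sequence $(x_n)_{n=1}^\infty$ of $\phi$, and since $\sigma = \bigast_j a_j\cdot\phi$, $\tau = \bigast_j b_j\cdot\phi$, a suitable combination $w_n = \sum_j a_j x_{k_j^{(n)}} - \sum_j b_j x_{l_j^{(n)}}$ along increasing index blocks will be a defining sequence for a type closely tracking $\sigma$ and $\tau$, with $\tn u - v\tn = \lim_n \|w_n\|$ (up to subsequence), by the defining property of the spreading model.

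\textbf{Key steps.} First I would fix $(\lambda,x)\in\mathbb{Q}\times\Delta$ and a nonprincipal ultrafilter $\mathcal{U}$. Using Remark \ref{remrealizes} and the definition of convolution and dilation, I would produce, for each $n$, an element $s_n\in\Delta$ of the form $\lambda$ times an appropriate integer-combination-block of the $x_k$'s, chosen along strictly increasing blocks of indices tending to infinity, so that $\lim_{n,\mathcal{U}} d(s_n, x) $ equals the value at $(\lambda, x)$ of the type realized by $\lambda\cdot(u-v)$, and so that $\lim_{n,\mathcal{U}}\|s_n\| = |\lambda|\tn u - v\tn$ (this second limit holds by the spreading model approximation property, passing to a subsequence if $X$ is separable, which it is). Now the triangle inequality in the pseudometric $d$ gives, for the type $\mu$ realized by $\lambda\cdot(u-v)$ at $(\lambda, x)$ — wait, more directly: write $d(s_n, x)$ and compare to $d(s_n, 0) + d(0,x)$ and to $|d(s_n,0) - d(0,x)|$. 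Since $d$ is translation-invariant and coarsely equivalent to the norm, $d(s_n,0)\ge \rho_{\mathrm{Id}}(\|s_n\|)$ and $d(s_n,0)\le\omega_{\mathrm{Id}}(\|s_n\|)$, and $\|s_n\|\to|\lambda|\tn u-v\tn$. Taking limits along $\mathcal{U}$ and letting $\varepsilon>0$ absorb the approximation error between $\|s_n\|$ and $|\lambda|\tn u - v\tn$ (monotonicity of $\rho_{\mathrm{Id}}$ and $\omega_{\mathrm{Id}}$) yields exactly the first displayed inequality; a symmetric argument with the reverse triangle inequality $|d(s_n,0) - d(0,x)| \le d(s_n,x)$ — and noting $d(0,x) = \sigma(0,x)$ is not quite what we want — in fact the cleanest route is to observe that $\sigma(\lambda,x)$ and $\tau(\lambda,x)$ each arise as limits $\lim d(\lambda u_n, x)$, $\lim d(\lambda v_n, x)$ for appropriate defining pieces $u_n, v_n$, and $|d(\lambda u_n, x) - d(\lambda v_n, x)| \le d(\lambda u_n, \lambda v_n) = d(\lambda(u_n - v_n), 0) \le \omega_{\mathrm{Id}}(|\lambda|\|u_n - v_n\|)$, with $\|u_n - v_n\|\to\tn u - v\tn$; this gives the second inequality after letting $\varepsilon\to0$. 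Finally, parts (i) and (ii) follow by specializing to $v$ realizing $\overline 0$ (so $\tau = \overline 0$, $\tau(1,0)=0$) and $(\lambda,x) = (1,0)$: the first inequality becomes $\sup_{0<\varepsilon\le\tn u\tn}\rho_{\mathrm{Id}}(\tn u\tn - \varepsilon)\le\sigma(1,0)$, which gives (i) when $\tn u\tn>\delta$ (choose $\varepsilon$ with $\tn u\tn - \varepsilon = \delta$), and the contrapositive of the second inequality, $\sigma(1,0)\le\inf_\varepsilon\omega_{\mathrm{Id}}(\tn u\tn + \varepsilon)$, gives (ii).

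\textbf{Main obstacle.} The delicate point is bookkeeping with the defining sequences: I must arrange a single sequence of ``interleaved'' finite combinations of the $x_k$'s whose indices march off to infinity in the right blocked order so that, simultaneously, (a) it is a defining sequence for the type realized by $\lambda(u-v)$ — which requires respecting the nested-limit structure in the definition of iterated convolution — and (b) its norms converge (along a subsequence) to $|\lambda|\tn u - v\tn$ via the spreading-model approximation. Because $u$ and $v$ may share coordinates $\zeta_j$, one cannot literally treat $u - v$ as having disjoint support; instead one works with the single element $u - v = \sum_j c_j\zeta_j$ and its realized type $\bigast_j c_j\cdot\phi$, and uses the uniqueness and $1$-spreading property of the spreading model (Section \ref{SubsectionSpreading}) to pass freely between the fundamental sequence and far-out blocks of $(x_k)$. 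Keeping the $\varepsilon$'s consistent between the approximation errors and the monotonicity arguments for $\rho_{\mathrm{Id}}$, $\omega_{\mathrm{Id}}$ is the only real care required; everything else is the triangle inequality plus coarse equivalence.
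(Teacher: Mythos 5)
Your proposal is correct and, after the self-correction in the middle, lands on exactly the paper's argument: write $u$ and $v$ over the same coordinates $\zeta_1,\dots,\zeta_m$, evaluate $\sigma$, $\tau$, and the type of $u-v$ along the same increasing index blocks of $(x_{n})$, and combine the triangle and reverse-triangle inequalities for the translation-invariant $d$ with $\rho_{\mathrm{Id}}(\|\cdot\|)\leq d(\cdot,0)\leq\omega_{\mathrm{Id}}(\|\cdot\|)$ and the spreading-model convergence $\|\lambda\sum_j(\alpha_j-\beta_j)x_{n_j}\|\to|\lambda|\tn u-v\tn$, with the $\eps$ absorbing the approximation error via monotonicity; the particular cases follow by taking $v=0$ and $(\lambda,x)=(1,0)$ just as you say. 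This is essentially the same proof as in the paper.
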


\begin{proof}
Say $u=\sum_{j=1}^m\alpha_j\zeta_j$ and $v=\sum_{j=1}^m\beta_j\zeta_j$, for some $(\alpha_j)_{j=1}^m,(\beta_j)_{j=1}^m\in\mathbb{Q}^{<\mathbb{N}}$. Then

\begin{align*}
\rho_\mathrm{Id}(|\lambda| \tn u-v \tn -\eps )
&\leq \limsup_{n_m}\dots \limsup_{n_1} \rho_\mathrm{Id}\Big(\Big\|\lambda \sum_{j=1}^m(\alpha_j-\beta_j)x_{n_j}\Big\| \Big)\\
&\leq \lim_{n_m}\ldots\lim_{ n_1}d\Big(\lambda \sum_{j=1}^m(\alpha_j-\beta_j)x_{n_j},0\Big)\\
&\leq\lim_{n_m}\ldots\lim_{ n_1}\Big(d\Big(\lambda\sum_{j=1}^m\alpha_j x_{n_j},x\Big)+d\Big(\lambda\sum_{j=1}^m\beta_j x_{n_j},x\Big)\Big)\\
&= \sigma(\lambda,x)+\tau(\lambda,x)
\end{align*}\hfill

\noindent for every $0<\eps< |\lambda|\tn u-v \tn$.  Similarly,

\begin{align*}
|\sigma(\lambda,x)-\tau(\lambda,x)|
&=\lim_{n_m}\ldots\lim_{ n_1}\Big|d\Big(\lambda\sum_{j=1}^m\alpha_j x_{n_j},x\Big)-d\Big(\lambda\sum_{j=1}^m\beta_j x_{n_j},x\Big)\Big|\\
&\leq \lim_{n_m}\ldots\lim_{ n_1}d\Big(\lambda \sum_{j=1}^m(\alpha_j-\beta_j)x_{n_j},0\Big)\\
&\leq\liminf_{n_m}\dots \liminf_{n_1} \omega_\mathrm{Id}\Big(\Big\|\lambda \sum_{j=1}^m(\alpha_j-\beta_j)x_{n_j}\Big\| \Big)\\
&\leq \omega_\mathrm{Id}(|\lambda| \tn u-v \tn +\eps )
\end{align*}\hfill

\noindent for all $\eps >0$.  The particular case follows by letting $v=0$ and $\lambda=1$.
\end{proof}

Let $H=\text{span}_\mathbb{Q}\{\xi_i\mid i\in\N\}\subseteq S$. Given $\overline{\alpha}=(\alpha_j)_{j=1}^m\in \mathbb{Q}^{<\mathbb{N} }$, we define a bounded linear map $T_{\overline{\alpha}} \colon \overline{H}\to \overline{H}$ as follows. For each $n\in\N$ let

\[T_{\overline{\alpha}} (\xi_n)=\sum_{j=1}^m \alpha_j \xi_{mn+j-1}\]\hfill

\noindent and extend $T_{\overline{\alpha}}$ linearly to $H$.  As $(\xi_n)_{n=1}^\infty$ is $1$-spreading, we have that $\tn T_{\overline{\alpha}}(u)\tn\leq \|\alpha\|_1 \tn u\tn$, for all $u\in H$. Hence, we can extend $T_{\overline{\alpha}}$ to a bounded operator $T_{\overline{\alpha}}\colon \overline{H}\to \overline{H}$. If $\overline{\alpha}=(\alpha_1)$ is a sequence of length 1, then  $T_{\overline{\alpha}}u$ is just the scaling of $u$ by $\alpha_1$.  We also  define the function $\widehat{T}_{\overline{\alpha}} \colon \mathcal{C}\to \mathcal{C}$ by letting

\[\widehat{T}_{\overline{\alpha}} (\sigma)=\bigast_{j=1}^m\alpha_j\cdot \sigma\]\hfill

\noindent for all $\sigma\in \mathcal{C}$.

\begin{lemma}
\label{circk}
Let $\overline{\alpha}=(\alpha_i)_{i=1}^n,\overline{\beta}=(\beta_j)_{j=1}^m\in\mathbb{Q}^{<\mathbb{N}}$. Let  $\overline{\gamma}=(\gamma_k)_{k=1}^{nm}\in \mathbb{Q}^{<\mathbb{N}}$, where $\gamma_k=\alpha_i \beta_j$ whenever $k=n(j-1)+i$. Then $T_\alpha \circ T_\beta =T_{\gamma}$ and $\widehat{T}_\alpha \circ \widehat{T}_\beta =\widehat{T}_{\gamma}$. 
\end{lemma}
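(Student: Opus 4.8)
\textbf{Proof plan for Lemma \ref{circk}.}

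The plan is to verify both identities by a direct computation on the natural generating sets and then extend by linearity/continuity and by the defining properties of convolution. For the operator identity $T_{\overline\alpha}\circ T_{\overline\beta}=T_{\overline\gamma}$, I would first compute the effect on a single basis vector $\xi_n$. Applying $T_{\overline\beta}$ gives $T_{\overline\beta}(\xi_n)=\sum_{j=1}^m\beta_j\xi_{mn+j-1}$, and then applying $T_{\overline\alpha}$ and using linearity yields
\[
T_{\overline\alpha}\bigl(T_{\overline\beta}(\xi_n)\bigr)=\sum_{j=1}^m\beta_j\,T_{\overline\alpha}(\xi_{mn+j-1})=\sum_{j=1}^m\sum_{i=1}^n\beta_j\alpha_i\,\xi_{n(mn+j-1)+i-1}.
\]
The key bookkeeping step is to check that the index $n(mn+j-1)+i-1$ traverses exactly the block $\{n m n,\dots, nmn + nm -1\}$ of length $nm$ as $(j,i)$ ranges over its domain, and that under the substitution $k=n(j-1)+i$ this exactly matches $T_{\overline\gamma}(\xi_n)=\sum_{k=1}^{nm}\gamma_k\xi_{(nm)n+k-1}$ with $\gamma_k=\alpha_i\beta_j$; indeed $n(mn+j-1)+i-1=(nm)n+n(j-1)+i-1=(nm)n+k-1$. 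Since $T_{\overline\alpha}$, $T_{\overline\beta}$, $T_{\overline\gamma}$ are all bounded linear (as noted in the excerpt, via $1$-spreading) and agree on the spanning set $\{\xi_n\mid n\in\N\}$, they agree on $H$ and hence on $\overline H$.

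For the type identity $\widehat T_{\overline\alpha}\circ\widehat T_{\overline\beta}=\widehat T_{\overline\gamma}$, I would argue using associativity and commutativity of convolution together with the fact (recorded right after Lemma \ref{welldef}) that dilation distributes over convolution. Concretely, for $\sigma\in\mathcal C$,
\[
\widehat T_{\overline\alpha}\bigl(\widehat T_{\overline\beta}(\sigma)\bigr)=\bigast_{i=1}^n\alpha_i\cdot\Bigl(\bigast_{j=1}^m\beta_j\cdot\sigma\Bigr)=\bigast_{i=1}^n\bigast_{j=1}^m(\alpha_i\beta_j)\cdot\sigma,
\]
where the second equality uses that $\alpha_i\cdot(-)$ distributes over the $m$-fold convolution (so that $\alpha_i\cdot(\bigast_{j}\beta_j\cdot\sigma)=\bigast_j(\alpha_i\beta_j)\cdot\sigma$, invoking also $\alpha_i\cdot(\beta_j\cdot\sigma)=(\alpha_i\beta_j)\cdot\sigma$, which is immediate from $\alpha\cdot\sigma(\lambda,x)=\sigma(\lambda\alpha,x)$). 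Then by commutativity and associativity of convolution the double-indexed convolution can be reindexed by the single index $k=n(j-1)+i$, giving $\bigast_{k=1}^{nm}\gamma_k\cdot\sigma=\widehat T_{\overline\gamma}(\sigma)$. Alternatively — and perhaps more cleanly for the write-up — one can use Remark \ref{remrealizes} and the realization correspondence: if $u$ realizes $\sigma$ one checks that a suitable block sum realizes $\widehat T_{\overline\beta}(\sigma)$, etc., transferring the operator identity to the type identity.

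I do not anticipate a genuine obstacle here; the only delicate point is the index arithmetic showing the two reindexings (for operators, $k=n(j-1)+i$ versus the nested block structure; for types, the reindexing of the iterated convolution) are consistent with the definition $\gamma_k=\alpha_i\beta_j$ for $k=n(j-1)+i$. I would be careful to state the range of $(i,j)$ explicitly ($1\le i\le n$, $1\le j\le m$) and to confirm that $k\mapsto(i,j)$ is a bijection onto $\{1,\dots,nm\}$, so that no terms are missed or double-counted. Everything else is routine linearity and the already-established algebraic properties of dilation and convolution.
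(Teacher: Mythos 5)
Your proposal is correct and follows essentially the same route as the paper: compute $T_{\overline{\alpha}}\circ T_{\overline{\beta}}$ on each basis vector, verify the index identity $n(mk+j-1)+i-1=nmk+n(j-1)+i-1$, extend by linearity and continuity, and derive the $\widehat{T}$ identity from distributivity of dilation over convolution together with commutativity/associativity. The only cosmetic issue is your reuse of $n$ both as the length of $\overline{\alpha}$ and as the basis index; the paper avoids this by computing on $\xi_k$.
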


\begin{proof}
For any $k\in \mathbb{N}$,

\begin{align*}
(T_{\overline{\alpha}} \circ T_{\overline{\beta}} )(\xi_k)& =T_{\overline{\alpha}} \Big(\sum_{j=1}^m\beta_j \xi_{mk+j-1}\Big)\\ 
&=\sum_{j=1}^m \sum_{i=1}^n\alpha_i \beta_j \xi_{n(mk+j-1)+i-1}\\
&=\sum_{j=1}^m \sum_{i=1}^n \alpha_i\beta_j \xi_{nmk+n(j-1)+i-1}\\ &=  \sum_{\ell=1}^{nm}\gamma_\ell \xi_{nmk+\ell-1}\\
&=T_{\overline{\gamma}}(\xi_k)
\end{align*}\hfill

\noindent Therefore $T_{\overline{\alpha}} \circ T_{\overline{\beta}}=T_{\overline{\gamma}}$, by linearity and continuity. Similarly,

\begin{align*}
(\widehat{T}_{\overline{\alpha}}\circ\widehat{T}_{\overline{\beta}})(\sigma)
=\widehat{T}_\alpha \Big(\bigast_{j=1}^m \beta_j\sigma\Big)
=\bigast_{j=1}^m\bigast_{i=1}^n\alpha_i\beta_j\sigma=\bigast_{\ell=1}^{nm} \gamma_\ell \sigma
=\widehat{T}_{\overline{\gamma}}(\sigma).
\end{align*}\hfill

\noindent for all $\sigma\in \mathcal{C}$, and so $\widehat{T}_{\overline{\alpha}}\circ\widehat{T}_{\overline{\beta}}=\widehat{T}_{\overline{\gamma}}$.
\end{proof}

The previous lemma justifies the following definition.

\begin{defi}
Let $\overline{\alpha}=(\alpha_i)_{i=1}^n,\overline{\beta}=(\beta_j)_{j=1}^m\in\mathbb{Q}^{<\mathbb{N}}$.  We define $\overline{\alpha} \circ \overline{\beta}=(\gamma_k)_{k=1}^{nm}\in \mathbb{Q}^{<\mathbb{N}}$ by $\gamma_k=\alpha_i \beta_j$ whenever $k=n(j-1)+i$.  We define $\overline{\alpha}^{\circ k}$ recursively by letting $\overline{\alpha}^{\circ 1}=\overline{\alpha}$ and $\overline{\alpha}^{\circ k+1}=\overline{\alpha}\circ \overline{\alpha}^{\circ k}$ for every $k\in \mathbb{N}$.
\end{defi}

\begin{remark}
Notice that, $\widehat{T}^k_{\overline{\alpha}}=\widehat{T}_{\overline{\alpha}^{\circ k}}$ for all $\overline{\alpha}\in\Q^{<\N}$ and all $k\in\N$.
\end{remark}

\begin{lemma}\label{tauk}
Let ${\overline{\alpha}}=(\alpha_j)_{j=1}^m\in \Q^{<\mathbb{N}}$. Say $u\in H$ realizes the type $\sigma$. Then,  $T_{\overline{\alpha}}(u)$ realizes $\widehat{T}_{\overline{\alpha}}(\sigma)$. 
\end{lemma}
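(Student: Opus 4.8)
The plan is to unwind the definitions on both sides and reduce the statement to the defining property of the spreading model together with Remark \ref{remrealizes}. First I would write $u = \sum_{j=1}^{k}\beta_j\xi_{i_j}$ for some $\overline{\beta}=(\beta_j)_{j=1}^k\in\mathbb{Q}^{<\mathbb{N}}$ and some indices $i_1<\dots<i_k$, noting that since $(\xi_n)_{n=1}^\infty$ is $1$-spreading we lose nothing by taking $i_j=j$, so $u=\sum_{j=1}^k\beta_j\xi_j$. Recalling that $\xi_n=\zeta_{2n-1}-\zeta_{2n}$, the vector $u$ is a rational linear combination of the $\zeta_n$'s, and by the definition of ``realizes'' together with the fact (Remark \ref{remrealizes}) that convolution is compatible with addition of disjointly-supported realizing vectors, $u$ realizes the type $\sigma=\bigast_{j=1}^k\beta_j\cdot\psi$ up to the obvious regrouping (here one uses that each $\xi_j=\zeta_{2j-1}-\zeta_{2j}$ realizes $\psi=\phi\ast(-1)\cdot\phi$).

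Next I would compute $T_{\overline{\alpha}}(u)$ directly from the definition $T_{\overline{\alpha}}(\xi_n)=\sum_{i=1}^m\alpha_i\xi_{mn+i-1}$: by linearity,
\[
T_{\overline{\alpha}}(u)=\sum_{j=1}^k\beta_j\sum_{i=1}^m\alpha_i\xi_{mj+i-1},
\]
which is again a rational linear combination of $\xi$'s with pairwise disjoint ``blocks'' of indices $\{mj,\dots,mj+m-1\}$ as $j$ ranges over $1,\dots,k$. Again invoking Remark \ref{remrealizes} block by block, $T_{\overline{\alpha}}(u)$ realizes the convolution over $j$ of the types realized by the individual blocks $\beta_j\sum_{i=1}^m\alpha_i\xi_{mj+i-1}$; and each such block, being a $1$-spreading shift of $\sum_{i=1}^m\alpha_i\beta_j\xi_i$, realizes $\bigast_{i=1}^m(\alpha_i\beta_j)\cdot\psi$. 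Convolving over $j$ and using associativity and commutativity of $\ast$ together with the distributivity of dilation over convolution, I get that $T_{\overline{\alpha}}(u)$ realizes $\bigast_{j=1}^k\bigast_{i=1}^m\alpha_i\beta_j\cdot\psi$.

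Finally I would identify this last type with $\widehat{T}_{\overline{\alpha}}(\sigma)$. By definition $\widehat{T}_{\overline{\alpha}}(\sigma)=\bigast_{i=1}^m\alpha_i\cdot\sigma=\bigast_{i=1}^m\alpha_i\cdot\big(\bigast_{j=1}^k\beta_j\cdot\psi\big)$, and distributing dilation over convolution gives exactly $\bigast_{i=1}^m\bigast_{j=1}^k(\alpha_i\beta_j)\cdot\psi$, which coincides with the expression obtained above by commutativity of $\ast$. Hence $T_{\overline{\alpha}}(u)$ realizes $\widehat{T}_{\overline{\alpha}}(\sigma)$. The only mildly delicate point — the ``main obstacle'' — is the careful bookkeeping of index blocks to legitimately apply Remark \ref{remrealizes} (which requires the realizing vectors being added to have disjoint and increasing supports in the $\zeta_n$'s), and making sure the $1$-spreading property is used correctly to shift each block down to a standard initial segment; everything else is formal manipulation with the algebraic identities for $\ast$ and $\cdot$ established after Lemma \ref{welldef}.
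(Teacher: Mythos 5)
Your proposal is correct and follows essentially the same route as the paper's proof: expand $u$ in the $\xi_j$'s, compute $T_{\overline{\alpha}}(u)$ explicitly, read off the type it realizes via Remark \ref{remrealizes}, and identify it with $\widehat{T}_{\overline{\alpha}}(\sigma)$ using distributivity of dilation over convolution. The paper's version is terser (it omits the explicit block-by-block support bookkeeping and the $1$-spreading normalization, which you correctly identify as the only delicate point), but the argument is the same.
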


 \begin{proof}
Suppose $u=\sum_{i=1}^{n}\lambda_i\xi_i$, so $\sigma= \bigast_{i=1}^n\lambda_i \cdot\psi$.
 Then 
 
 \[T_{\overline{\alpha}}(u)=\sum_{i=1}^n\lambda_i \sum_{j=1}^m\alpha_j \xi_{mi+j-1}=\sum_{j=1}^m\sum_{i=1}^n\alpha_j\lambda_i\xi_{mi+j-1}\]\hfill
 
 \noindent which realizes the type 
 
 \[\bigast_{i=1}^m \bigast_{j=1}^n\alpha_j\lambda_i\cdot \psi=\bigast_{j=1}^m\alpha_j\cdot \bigast_{i=1}^n\lambda_i\cdot \psi=\widehat{T}_{\overline{\alpha}} (\sigma).\]
 \end{proof}

\begin{lemma}\label{N(u-v)ext2}
Fix $N\in\mathbb{N}$.  For each $1\leq i\leq N$, suppose $u_i, v_i\in H$ realize $\sigma$ and $\tau$, respectively. Let $(\overline{\alpha}_i)_{i=1}^N, (\overline{\beta}_i)_{i=1}^N\subseteq \mathbb{Q}^{<\mathbb{N}}$ and $(b_i)_{i=1}^N\in\Q^N$. Then for every $(\lambda, x)\in \mathbb{Q}\times \Delta$, we have that

\[\Big|\bigast_{i=1}^Nb_i\cdot\widehat{T}_{\overline{\alpha}_i}\sigma(\lambda,x)-\bigast_{i=1}^Nb_i\cdot\widehat{T}_{\overline{\beta}_i}\tau(\lambda,x)\Big|\leq \inf_{\eps>0}\omega_\mathrm{Id}\Big(|\lambda| \sum_{i=1}^N|b_i|\cdot\tn T_{\overline{\alpha}_i}u_i-T_{\overline{\beta}_i}v_i \tn +\eps\Big).\]
\end{lemma}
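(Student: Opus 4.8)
The plan is to reduce Lemma \ref{N(u-v)ext2} to a telescoping estimate built on top of Lemma \ref{N(u-v)}. First I would observe that by Lemma \ref{tauk}, $T_{\overline{\alpha}_i}u_i$ realizes $\widehat{T}_{\overline{\alpha}_i}\sigma$ and $T_{\overline{\beta}_i}v_i$ realizes $\widehat{T}_{\overline{\beta}_i}\tau$ for each $i$; then by Remark \ref{remrealizes} (after translating the realizing vectors onto disjoint blocks of the spreading sequence, which is legitimate since types only depend on the scalar coefficients, not the index set), the vector $u\coloneqq\bigoplus_{i=1}^N b_i T_{\overline{\alpha}_i}u_i$ realizes $\bigast_{i=1}^N b_i\cdot\widehat{T}_{\overline{\alpha}_i}\sigma$ and, correspondingly, $v\coloneqq\bigoplus_{i=1}^N b_i T_{\overline{\beta}_i}v_i$ realizes $\bigast_{i=1}^N b_i\cdot\widehat{T}_{\overline{\beta}_i}\tau$. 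Here I mean: pick realizing vectors on blocks of $(\xi_n)$ that are consecutive and non-overlapping in $i$, so that the scaling by $b_i$ and the convolution go through; this is exactly the content of Remark \ref{remrealizes} applied $N-1$ times.

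With that reduction, the left-hand side of the desired inequality is $|\sigma'(\lambda,x)-\tau'(\lambda,x)|$ where $\sigma'$ is realized by $u$ and $\tau'$ is realized by $v$, and the second displayed inequality of Lemma \ref{N(u-v)} immediately gives
\[
|\sigma'(\lambda,x)-\tau'(\lambda,x)|\leq \inf_{\eps>0}\omega_\mathrm{Id}(|\lambda|\tn u-v\tn+\eps).
\]
So the whole statement comes down to the estimate $\tn u-v\tn\leq \sum_{i=1}^N |b_i|\cdot\tn T_{\overline{\alpha}_i}u_i-T_{\overline{\beta}_i}v_i\tn$. Since $u-v=\bigoplus_{i=1}^N b_i(T_{\overline{\alpha}_i}u_i-T_{\overline{\beta}_i}v_i)$ is a ``disjointly supported'' sum in the spreading model and $(\zeta_n)_{n=1}^\infty$ is a $1$-spreading basic sequence, I would use the triangle inequality together with the fact that shifting coordinates does not increase the norm — more precisely that for a $1$-spreading sequence the norm of a sum of successive blocks is at most the sum of the norms of those blocks after moving each to a common (late) position. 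Care is needed because one must first move $T_{\overline{\alpha}_i}u_i - T_{\overline{\beta}_i}v_i$ to a common location to even form the comparison; $1$-spreading lets each individual term be so moved isometrically, and then the triangle inequality closes the gap.

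I expect the main obstacle to be purely bookkeeping: making rigorous the claim that one can choose, on pairwise-disjoint consecutive blocks of $(\xi_n)$, vectors realizing prescribed types and simultaneously realizing the correct convolutions, while keeping track of which blocks the operators $T_{\overline{\alpha}_i}$ spread the support over. This is where the definition of ``realizes'' (via $\bigast_{j=1}^m\alpha_j\cdot\phi$) and Remark \ref{remrealizes} must be invoked with some care about index ranges, since $T_{\overline{\alpha}_i}$ maps $\xi_n$ to a block of length $|\overline{\alpha}_i|$ and the blocks must be arranged so that no overlaps occur; but since everything depends only on the relative order of indices and $(\xi_n)$ is $1$-spreading, this rearrangement is harmless. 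Once the disjointness is set up correctly, the two nontrivial inputs — Lemma \ref{tauk} to identify the realized types and the second inequality of Lemma \ref{N(u-v)} to pass from the spreading-model norm to $\omega_\mathrm{Id}$ — do all the real work, and the rest is the triangle inequality plus $1$-spreading.
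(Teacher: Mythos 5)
Your proposal is correct and follows essentially the same route as the paper: identify the types realized by $T_{\overline{\alpha}_i}u_i$ and $T_{\overline{\beta}_i}v_i$ via Lemma \ref{tauk}, place the scaled copies on pairwise disjoint blocks (shifting the $i$-th $u$-block and $i$-th $v$-block by the \emph{same} amount, which the paper does explicitly with shift maps $s_{m_i}$) so that Remark \ref{remrealizes} yields realizations of the two convolutions, and then combine the second inequality of Lemma \ref{N(u-v)} with the triangle inequality and the isometry of shifts on the $1$-spreading sequence. The bookkeeping you flag is exactly what the paper's recursive construction of $(u_i')$ and $(v_i')$ handles, and your argument closes the same way.
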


\begin{proof}
For each $m\in \mathbb{N}$, let $s_m\colon H\to H$ be the linear map given by  $s_m(\xi_n)=\xi_{n+m}$ for each $n\in \mathbb{N}$. We construct sequences $(u_i')_{i=1}^N, (v_i')_{i=1}^N\subseteq H$ recursively as follows.  Let $u_1'=b_1T_{\overline{\alpha}_1}u_1$ and $v_1'=b_1 T_{\overline{\beta}_1}v_1$.  Given $u_i', v_i'$ for some $1\leq i<N$, let $m_i=\max \{\mathrm{supp}(u_i') \cup \mathrm{supp}(v_i')\}$ and then let $u_{i+1}'=b_{i+1}s_{m_i}(T_{\overline{\alpha}_{i+1}}u_{i+1})$ and $v_{i+1}'=b_{i+1}s_{m_i}(T_{\overline{\beta}_{i+1}}v_{i+1})$.  Clearly, both sequences  $(u_i')_{i=1}^N$ and   $(v_i')_{i=1}^N$ have  disjoint supports. Hence, by Lemma \ref{tauk} and Remark \ref{remrealizes}, $\sum_{i=1}^Nu_i'$ and $\sum_{i=1}^Nv_i'$ realize $\bigast_{i=1}^Nb_i\cdot\widehat{T}_{\overline{\alpha}_i}\sigma$ and   $\bigast_{i=1}^Nb_i\cdot\widehat{T}_{\overline{\beta}_i}\tau$, respectively.  Thus, by Lemma \ref{N(u-v)} and the fact that   $(\xi_n)_{n=1}^\infty$ is $1$-spreading, we have that

\begin{align*}
\Big|\bigast_{i=1}^Nb_i\cdot\widehat{T}_{\overline{\alpha}_i}\sigma(\lambda,x)-\bigast_{i=1}^Nb_i\cdot\widehat{T}_{\overline{\beta}_i}\tau(\lambda,x)\Big|
&\leq \inf_{\eps>0}\omega_\mathrm{Id}\Big(|\lambda|\cdot \tn \sum_{i=1}^N (u_i'-v_i') \tn +\eps \Big)\\
&\leq \inf_{\eps>0}\omega_\mathrm{Id}\Big(|\lambda| \sum_{i=1}^N\tn u_i'-v_i'\tn +\eps\Big)\\
&=\inf_{\eps>0}\omega_\mathrm{Id}\Big(|\lambda| \sum_{i=1}^N|b_i|\cdot\tn T_{\overline{\alpha}_i}u_i-T_{\overline{\beta}_i}v_i\tn +\eps\Big).
\end{align*}
\end{proof}

\section{Coarse approximating sequences.}\label{SectionCoarseApp}

The goal of this section is to show that the type $\psi$ satisfies the conclusion of Proposition \ref{theoremspecialtype} below. For that, we introduce the notion of coarse approximating sequences.

\begin{defi}
Let $u=\sum_{i=1}^k\alpha_i\xi_i\in\text{span}\{\xi_n\mid n\in\N\}$. We say that a vector $v\in \text{span}\{\xi_n\mid n\in\N\}$ is a \emph{spreading} of $u$ if $v=\sum_{i=1}^k\alpha_i\xi_{n_i}$ for some  $n_1<\ldots<n_k\in\N$.
\end{defi}

\begin{defi}\label{defapptypes}
Let  $(\overline{\alpha}_i)_{i=1}^N\subseteq\Q^{<\mathbb{N}}$ and $(\beta_i)_{i=1}^N\in \mathbb{R}_+^N$. A sequence of types $(\sigma_{n})_{n=1}^\infty\subseteq \mathcal{C}$  is called  a \emph{coarse $(\overline{\alpha}_i,\beta_i)_{i=1}^N$-approximating sequence} if there exists a sequence $(u_{n})_{n=1}^\infty\subseteq H$ and sequences $(u_{i,n})_{n=1}^\infty\subseteq H$ for each $1\leq i\leq N$ such that \\

\begin{enumerate}[(i)]
\item   $u_{n}$ realizes $\sigma_{n}$ for all $n\in\N$,
\item  $u_{i,n}$ is a spreading of $u_n$  for each $n\in \mathbb{N}$ and $1\leq i\leq N$,
 and
\item  $\lim_n\tn  T_{{\overline{\alpha}_i}}(u_{n})-\beta_i u_{i,n}\tn  =0$ for all $1\leq i\leq N$.
\end{enumerate}\hfill

\end{defi}

\begin{lemma}
Suppose $\overline{\alpha}\in \mathbb{Q}^{<\mathbb{N}}$, $\beta\geq 0$, and $(u_n)_{n=1}^\infty \subseteq H$.  If there is a spreading $(u_n')_{n=1}^\infty$ of $(u_n)_{n=1}^\infty$ such that $\lim_n\tn T_{\overline{\alpha}} (u_n)-\beta u_n'\tn=0$, then for every $k\in \mathbb{N}$ there is a spreading $(u_n'')_{n=1}^\infty$ of $(u_n)_{n=1}^\infty$ such that $\lim_n\tn T_{\overline{\alpha}}^k (u_n)-\beta^k u_n''\tn=0$.
\end{lemma}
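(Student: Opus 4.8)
The natural approach is induction on $k$, where the base case $k=1$ is the hypothesis. For the inductive step, suppose we have a spreading $(u_n')_{n=1}^\infty$ of $(u_n)_{n=1}^\infty$ with $\lim_n\tn T_{\overline{\alpha}}(u_n)-\beta u_n'\tn=0$, and suppose by induction that there is a spreading $(w_n)_{n=1}^\infty$ of $(u_n)_{n=1}^\infty$ with $\lim_n\tn T_{\overline{\alpha}}^{k}(u_n)-\beta^{k}w_n\tn=0$. We want a spreading $(u_n'')_{n=1}^\infty$ of $(u_n)_{n=1}^\infty$ with $\lim_n\tn T_{\overline{\alpha}}^{k+1}(u_n)-\beta^{k+1}u_n''\tn=0$. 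First I would apply $T_{\overline{\alpha}}$ to the hypothesis: since $T_{\overline{\alpha}}$ is bounded with $\tn T_{\overline{\alpha}}(v)\tn\le\|\overline{\alpha}\|_1\tn v\tn$, we get $\lim_n\tn T_{\overline{\alpha}}^{2}(u_n)-\beta\, T_{\overline{\alpha}}(u_n')\tn=0$, and iterating, $\lim_n\tn T_{\overline{\alpha}}^{k+1}(u_n)-\beta\, T_{\overline{\alpha}}^{k}(u_n')\tn=0$.

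So it remains to understand $T_{\overline{\alpha}}^{k}(u_n')$. The key point is that $(u_n')_{n=1}^\infty$ is a spreading of $(u_n)_{n=1}^\infty$, i.e., each $u_n'$ is obtained from $u_n$ by moving its coordinates (keeping the same coefficients) to a larger set of indices; since $(\xi_i)_{i=1}^\infty$ is $1$-spreading, and $T_{\overline{\alpha}}$ is defined by the same formula $\xi_n\mapsto\sum_{j}\alpha_j\xi_{mn+j-1}$ on every basis vector, the vector $T_{\overline{\alpha}}^{k}(u_n')$ is itself a spreading of $T_{\overline{\alpha}}^{k}(u_n)$ — more precisely, $T_{\overline{\alpha}}^{k}(u_n')$ and $T_{\overline{\alpha}}^{k}(u_n)$ have the same coefficient sequence (namely that of $T_{\overline{\alpha}}^{k}(u_n)$, equivalently $T_{\overline{\alpha}^{\circ k}}(u_n)$) listed in increasing order of index, just supported on different index sets. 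Now recall from the inductive hypothesis that $T_{\overline{\alpha}}^{k}(u_n)$ is $\tn\cdot\tn$-close to $\beta^k w_n$, and $w_n$ is a spreading of $u_n$. I would therefore set $u_n''$ to be the spreading of $u_n$ whose support sits ``underneath'' the support of $T_{\overline{\alpha}}^k(u_n')$ in the same combinatorial pattern that $w_n$ sits under $T_{\overline{\alpha}}^k(u_n)$; then $\tn T_{\overline{\alpha}}^{k}(u_n')-\beta^{k}u_n''\tn=\tn T_{\overline{\alpha}}^{k}(u_n)-\beta^{k}w_n\tn$ by $1$-spreading, which tends to $0$. Multiplying by $\beta$ and combining with the first estimate via the triangle inequality gives $\lim_n\tn T_{\overline{\alpha}}^{k+1}(u_n)-\beta^{k+1}u_n''\tn=0$, and $u_n''$ is by construction a spreading of $u_n$, completing the induction.

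The main obstacle — really just a bookkeeping nuisance rather than a genuine difficulty — is making precise the claim that applying the fixed ``spreading-type'' operator $T_{\overline{\alpha}}^k$ to a spreading $u_n'$ of $u_n$ produces a spreading of $T_{\overline{\alpha}}^k(u_n)$, and that one can then choose $u_n''$ so that the displacement of $\beta^k u_n''$ from $T_{\overline{\alpha}}^k(u_n')$ is ``the same'' as that of $\beta^k w_n$ from $T_{\overline{\alpha}}^k(u_n)$ up to reindexing. This is exactly the sort of statement that is immediate once one thinks in terms of coefficient sequences together with strictly increasing index maps, and it is precisely what $1$-spreading of $(\xi_i)$ is for: $\tn\cdot\tn$ of a vector depends only on its coefficient sequence, not on which increasing index set carries it. I would phrase the argument at that level of generality (coefficient sequence plus support), so that all the equalities of norms are instances of $1$-spreading and no real computation is needed.
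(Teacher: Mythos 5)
Your argument is correct and is essentially the paper's own proof run in the opposite order: the paper applies $T_{\overline{\alpha}}$ once to the inductive approximation $\tn T_{\overline{\alpha}}^{k}(u_n)-\beta^{k}u_n''\tn\to 0$ and then transfers the base-case approximation along the spreading $u_n\mapsto u_n''$, whereas you apply $T_{\overline{\alpha}}^{k}$ to the base-case approximation and transfer the inductive one along $u_n\mapsto u_n'$. Both versions rest on exactly the same two ingredients --- boundedness of $T_{\overline{\alpha}}$ plus the $1$-spreading transfer of an approximation along a spreading of its argument, finished off by the triangle inequality --- and the paper asserts the spreading-transfer step with the same ``coefficient sequence plus increasing index map'' justification you give, so the bookkeeping point you flag is handled there at the same level of detail.
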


\begin{proof}
For $k=1$ the  result is trivial. Suppose the result holds for some $k\in\N$. Let $(u_n'')_{n=1}^\infty$ be a spreading of $(u_n)_{n=1}^\infty$ such that $\lim_n\tn T_{\overline{\alpha}}^k (u_n)-\beta^k u_n''\tn=0$. 
By the definition of $T_{\overline{\alpha}}$, it follows that $(T_{\overline{\alpha}}(u_n''))_{n=1}^\infty$ is a spreading of $(T_{\overline{\alpha}}(u_n))_{n=1}^\infty$, so there exists a spreading $(u_n''')_{n=1}^\infty$ of $(u_n)_{n=1}^\infty$ such that also $(T_{\overline{\alpha}} (u_n'')-\beta u_n''')_{n=1}^\infty$ is a spreading of $(T_{\overline{\alpha}} (u_n)-\beta u_n')_{n=1}^\infty$.  Thus, by the 1-equivalence of $(\xi_n)_{n=1}^\infty$ with all its subsequences,

\begin{align*}
\tn T_{\overline{\alpha}}^{k+1} (u_n)-\beta^{k+1} u_n'''\tn 
&\leq \tn T_{\overline{\alpha}}^{k+1} (u_n)- T_{\overline{\alpha}}(\beta^k u_n'')\tn +\tn T_{\overline{\alpha}}(\beta^k u_n'')-\beta^{k+1} u_n'''\tn\\
&=\tn T_{\overline{\alpha}}(T_{\overline{\alpha}}^k (u_n)-\beta^k u_n'')\tn + \beta^k \tn T_{\overline{\alpha}}(u_n'')-\beta u_n'''\tn\\
&\leq  \tn T_{\overline{\alpha}}\tn\cdot\tn T_{\overline{\alpha}}^k (u_n)-\beta^k u_n''\tn + \beta^k \tn T_{\overline{\alpha}}(u_n)-\beta u_n'\tn.
\end{align*}\hfill

\noindent Therefore $\lim_n \tn T_{\overline{\alpha}}^{k+1} (u_n)-\beta^{k+1} u_n'''\tn=0$, so the result holds for $k+1$. By induction, we are finished.
\end{proof}

With the above lemma and Lemma \ref{circk}, we have the following corollary.

\begin{cor}\label{alphabetak}
If $(\sigma_n)_{n=1}^\infty$ is a coarse $({\overline{\alpha}}_i, \beta_i)_{i=1}^N$-approximating sequence, then it is also a coarse $({\overline{\alpha}}_i^{\circ k_i},\beta_i^{k_i})_{i=1}^N$-approximating sequence for every $k_1,\dots,k_N\in \mathbb{N}$.
\end{cor}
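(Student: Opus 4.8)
The plan is to deduce this directly from the lemma immediately preceding it, applied separately for each index $i$, together with the identity $T_{\overline{\alpha}}^{k}=T_{\overline{\alpha}^{\circ k}}$ coming from Lemma \ref{circk}. First I would unpack the hypothesis: let $(u_n)_{n=1}^\infty\subseteq H$ and $(u_{i,n})_{n=1}^\infty\subseteq H$ (for $1\leq i\leq N$) witness that $(\sigma_n)_{n=1}^\infty$ is a coarse $(\overline{\alpha}_i,\beta_i)_{i=1}^N$-approximating sequence; so $u_n$ realizes $\sigma_n$, each $u_{i,n}$ is a spreading of $u_n$, and $\lim_n\tn T_{\overline{\alpha}_i}(u_n)-\beta_i u_{i,n}\tn=0$ for each $i$. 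Keeping $(u_n)_{n=1}^\infty$ — and hence $(\sigma_n)_{n=1}^\infty$ — fixed, the only thing to produce is a new family of spreadings adapted to the exponents $k_i$.

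Next, fix $1\leq i\leq N$. Since $(u_{i,n})_{n=1}^\infty$ is a spreading of $(u_n)_{n=1}^\infty$ with $\lim_n\tn T_{\overline{\alpha}_i}(u_n)-\beta_i u_{i,n}\tn=0$, the preceding lemma (applied with $\overline{\alpha}=\overline{\alpha}_i$, $\beta=\beta_i$, $k=k_i$) yields a spreading $(u_{i,n}'')_{n=1}^\infty$ of $(u_n)_{n=1}^\infty$ such that $\lim_n\tn T_{\overline{\alpha}_i}^{k_i}(u_n)-\beta_i^{k_i} u_{i,n}''\tn=0$. Iterating Lemma \ref{circk} (exactly as in the remark that $\widehat{T}_{\overline{\alpha}}^{k}=\widehat{T}_{\overline{\alpha}^{\circ k}}$) gives $T_{\overline{\alpha}_i}^{k_i}=T_{\overline{\alpha}_i^{\circ k_i}}$ on $\overline{H}$, so this limit reads $\lim_n\tn T_{\overline{\alpha}_i^{\circ k_i}}(u_n)-\beta_i^{k_i} u_{i,n}''\tn=0$.

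Finally, I would verify that $(u_n)_{n=1}^\infty$ together with the spreadings $(u_{i,n}'')_{n=1}^\infty$, $1\leq i\leq N$, satisfies the three conditions of Definition \ref{defapptypes} for the data $(\overline{\alpha}_i^{\circ k_i},\beta_i^{k_i})_{i=1}^N$: condition (i) holds because $u_n$ still realizes $\sigma_n$, condition (ii) holds because each $u_{i,n}''$ is a spreading of $u_n$ by construction, and condition (iii) is the displayed limit just obtained. Hence $(\sigma_n)_{n=1}^\infty$ is a coarse $(\overline{\alpha}_i^{\circ k_i},\beta_i^{k_i})_{i=1}^N$-approximating sequence. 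There is no genuine obstacle here — all the content sits in the preceding lemma and in Lemma \ref{circk}. The only point to be mildly careful about is that the spreadings $(u_{i,n}'')_{n}$ for different $i$ are chosen independently and need not be mutually compatible, which is harmless since Definition \ref{defapptypes} imposes no joint condition on them.
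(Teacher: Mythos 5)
Your proposal is correct and is exactly the argument the paper intends: the corollary is stated with no separate proof beyond the remark that it follows from the preceding lemma together with Lemma \ref{circk}, and your write-up simply makes that explicit by applying the lemma coordinatewise with $k=k_i$ and invoking $T_{\overline{\alpha}_i}^{k_i}=T_{\overline{\alpha}_i^{\circ k_i}}$. Your closing observation that the spreadings for different $i$ need not be compatible is a correct and worthwhile sanity check, since Definition \ref{defapptypes} indeed imposes no joint condition on them.
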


\begin{lemma}\label{case3}
Suppose $({\overline{\alpha}}_i)_{i=1}^N\subseteq \mathbb{Q}^{<\mathbb{N}}$ is such that ${\overline{\alpha}}_i\circ {\overline{\alpha}}_j={\overline{\alpha}}_j\circ{\overline{\alpha}}_i$ for all $1\leq i,j\leq N$.  Then there are $(\beta_i)_{i=1}^N\in \mathbb{R}^N$ and $(\sigma_n)_{n=1}^\infty\subseteq \mathcal{C}$ such that  $(\sigma_n)_{n=1}^\infty$ is a coarse $({\overline{\alpha}}_i,\beta_i)_{i=1}^N$-approximating sequence and $\beta_i\in [\|{\overline{\alpha}}_i\|_\infty, \|{\overline{\alpha}}_i\|_1]$ for each $1\leq i\leq N$. Moreover, we may choose $(\sigma_n)_{n=1}^\infty$ so that for all $n\in \mathbb{N}$,  $b_1\leq \sigma_n(1,0)\leq b_2$ for some $\gamma <b_1\leq b_2$ not depending on $n$. 
\end{lemma}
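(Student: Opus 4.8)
The plan is to produce $(\sigma_n)_{n=1}^\infty$ by exhibiting vectors $u_n\in H$ that are common \emph{approximate eigenvectors}, with respect to the genuine norm $\tn\cdot\tn$ of the spreading model, for the commuting family of bounded operators $\{T_{\overline{\alpha}_i}\}_{i=1}^N$ on $\overline H$, and then setting $u_{i,n}=u_n$ for every $i$ (a vector is trivially a spreading of itself) and reading off $\sigma_n$ as the type realized by $u_n$. First I would fix the constants: for each $i$ put $\beta_i:=r(T_{\overline{\alpha}_i})$, the spectral radius of $T_{\overline{\alpha}_i}$ on $(\overline H,\tn\cdot\tn)$ (and $\beta_i=0$ if $\overline{\alpha}_i=\overline 0$). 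Since $T_{\overline{\alpha}_i}^{k}=T_{\overline{\alpha}_i^{\circ k}}$ by Lemma \ref{circk}, and $\tn T_{\overline{\gamma}}\tn\leq\|\overline{\gamma}\|_1$ while $\|\overline{\alpha}_i^{\circ k}\|_1=\|\overline{\alpha}_i\|_1^{k}$, we get $\beta_i\leq\|\overline{\alpha}_i\|_1$. For the lower bound, note $T_{\overline{\alpha}_i}(u)=\sum_{j=1}^{m_i}(\overline{\alpha}_i)_j u^{(j)}$ is a sum of $m_i$ disjointly supported spreadings of $u$, so the $1$-sign unconditionality of $(\xi_n)_{n=1}^\infty$ gives $\tn T_{\overline{\alpha}_i}(u)\tn\geq\|\overline{\alpha}_i\|_\infty\tn u\tn$ for all $u\in\overline H$; iterating, $\tn T_{\overline{\alpha}_i}^{k}(\xi_1)\tn\geq\|\overline{\alpha}_i\|_\infty^{k}\tn\xi_1\tn$, whence $\beta_i\geq\|\overline{\alpha}_i\|_\infty$. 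Thus $\beta_i\in[\|\overline{\alpha}_i\|_\infty,\|\overline{\alpha}_i\|_1]$.

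The heart of the matter is to construct, using the commutation relations $\overline{\alpha}_i\circ\overline{\alpha}_j=\overline{\alpha}_j\circ\overline{\alpha}_i$ (equivalently $T_{\overline{\alpha}_i}\circ T_{\overline{\alpha}_j}=T_{\overline{\alpha}_j}\circ T_{\overline{\alpha}_i}$, by Lemma \ref{circk}), a sequence $(v_n)_{n=1}^\infty\subseteq\overline H$ with $\tn v_n\tn=1$ and $\lim_n\tn T_{\overline{\alpha}_i}(v_n)-\beta_i v_n\tn=0$ for every $1\leq i\leq N$ simultaneously. For a single operator this is the standard fact that the spectral radius lies on the boundary of the spectrum and is therefore an approximate eigenvalue; moreover any approximate eigenvalue $\lambda$ of $T_{\overline{\alpha}_i}$ satisfies $\|\overline{\alpha}_i\|_\infty\leq|\lambda|\leq\|\overline{\alpha}_i\|_1$, since $T_{\overline{\alpha}_i}$ is bounded below by $\|\overline{\alpha}_i\|_\infty$ and bounded above by $\|\overline{\alpha}_i\|_1$. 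For the commuting tuple one then argues either through the joint approximate point spectrum of $(T_{\overline{\alpha}_1},\dots,T_{\overline{\alpha}_N})$, or, more concretely, by exploiting the self-similar ``substitution'' form of the approximate eigenvectors of each $T_{\overline{\alpha}_i}$ (whose coefficients are, up to damping by powers of $\beta_i$, products of the entries of $\overline{\alpha}_i$ read off the base-$m_i$ digit expansion of the index) and observing that the commutation relations force these self-similar patterns to be compatible with one another, so that a single truncated self-similar vector approximately diagonalizes the whole family. Since $H=\mathrm{span}_\Q\{\xi_i\mid i\in\N\}$ is dense in $\overline H$, one may perturb and rescale by rationals to obtain $u_n\in H$ with $\tn u_n\tn$ lying in a fixed compact interval $[c_1,c_2]\subseteq(0,\infty)$ of our choosing, still satisfying $\lim_n\tn T_{\overline{\alpha}_i}(u_n)-\beta_i u_n\tn=0$ for each $i$.

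It remains to check the bookkeeping. Writing $u_n=\sum_j\lambda_{j,n}\xi_j$, the vector $u_n$ realizes $\sigma_n:=\bigast_j\lambda_{j,n}\cdot\psi$; since $\psi\in\mathcal{C}$ and $\mathcal{C}$ is closed under dilation and convolution, $\sigma_n\in\mathcal{C}$, which gives (i), while $u_{i,n}:=u_n$ gives (ii) and (iii). For the ``moreover'' clause, Lemma \ref{N(u-v)}(i),(ii) bound $\sigma_n(1,0)$ below by $\rho_{\mathrm{Id}}$ and above by $\omega_{\mathrm{Id}}$ of quantities comparable to $\tn u_n\tn\in[c_1,c_2]$; as $\rho_{\mathrm{Id}}(t)\to\infty$, we may fix $c_1$ so large that $\rho_{\mathrm{Id}}(c_1)>\gamma$ and then set $b_1:=\rho_{\mathrm{Id}}(c_1)$ and $b_2:=\omega_{\mathrm{Id}}(c_2+1)$, giving $\gamma<b_1\leq\sigma_n(1,0)\leq b_2$ with $b_1,b_2$ independent of $n$.

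The main obstacle is the second paragraph. The single-operator case is routine spectral theory, but building one sequence of vectors that approximately diagonalizes all $N$ commuting operators at once — with the eigenvalues confined to $[\|\overline{\alpha}_i\|_\infty,\|\overline{\alpha}_i\|_1]$ and all $N$ errors tending to $0$ — is exactly where the commutativity hypothesis must be used in an essential way, and where the self-similar structure of the $T_{\overline{\alpha}_i}$ (via Lemma \ref{circk}) does the real work.
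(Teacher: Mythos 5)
Your outline matches the paper's strategy — common approximate eigenvectors for the commuting family $\{T_{\overline{\alpha}_i}\}_{i=1}^N$, the two-sided bound $\|\overline{\alpha}_i\|_\infty\tn u\tn\leq\tn T_{\overline{\alpha}_i}(u)\tn\leq\|\overline{\alpha}_i\|_1\tn u\tn$ to locate the $\beta_i$, a rational perturbation into $H$, and Lemma \ref{N(u-v)} for the ``moreover'' clause — but the step you yourself flag as ``the main obstacle'' is the entire mathematical content of the lemma, and you do not prove it. Gesturing at ``the joint approximate point spectrum'' or at ``self-similar substitution patterns forced to be compatible by commutativity'' is not an argument; nothing in what you wrote shows that a single normalized sequence $(u_n)$ with $\lim_n\tn T_{\overline{\alpha}_i}(u_n)-\beta_i u_n\tn=0$ for all $i$ simultaneously actually exists. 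The paper closes this gap by citation: each $T_{\overline{\alpha}_i}$ (of length $\geq 2$) is bounded below but not invertible (its range misses $\xi_1$), so by \cite[Proposition IV.1]{KrivineMaurey1981} it has a real non-negative approximate eigenvalue; since the operators commute (Lemma \ref{circk}), \cite[Proposition 12.18]{BenyaminiLindenstrauss} then produces one normalized sequence that is an approximate eigenvector sequence for all of them at once. If you do not invoke these (or reprove them), your proof is incomplete exactly where it matters.

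A second, related problem is your a priori choice $\beta_i:=r(T_{\overline{\alpha}_i})$. First, for a real operator the spectral radius need not be a real approximate eigenvalue — a boundary point of the spectrum of modulus $r(T)$ is an approximate eigenvalue of the complexification, but it need not be real and non-negative; the paper instead gets a real non-negative approximate eigenvalue from non-invertibility via \cite[Proposition IV.1]{KrivineMaurey1981}. Second, even granting that each $r(T_{\overline{\alpha}_i})$ is individually an approximate eigenvalue, there is no reason the $N$-tuple of spectral radii lies in the \emph{joint} approximate point spectrum: the common-eigenvector lemma produces \emph{some} tuple $(\beta_i)_{i=1}^N$, not one you prescribe in advance. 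The correct order of operations, and the one the paper follows, is to let that lemma output the $\beta_i$ together with the sequence $(u_n)$, and only then deduce $\beta_i\in[\|\overline{\alpha}_i\|_\infty,\|\overline{\alpha}_i\|_1]$ from $\tn u_n\tn=1$ and the two-sided norm estimate. Your bookkeeping in the last paragraph (taking $u_{i,n}=u_n$, reading off $\sigma_n$ as the realized type, and using Lemma \ref{N(u-v)} to pin $\sigma_n(1,0)$ between $b_1>\gamma$ and $b_2$) is fine and agrees with the paper.
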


\begin{proof}
For those ${\overline{\alpha}}_i$'s that are length 1 sequences, the proposition is clear with $\{\beta_i\}={\overline{\alpha}}_i$.
So suppose for each $1\leq i\leq N$ that ${\overline{\alpha}}_i$ is a sequence of length at least 2. 
As the basis $(\xi_n)_{n=1}^\infty$ of $\overline{H}$ is $1$-unconditional and $1$-spreading, we have that $\|{\overline{\alpha}}_i\|_\infty \tn u\tn \leq \tn  T_{{\overline{\alpha}}_i}(u)\tn  \leq \|{\overline{\alpha}}_i\|_1\tn u\tn $, for all $u\in \overline{H}$ and all $1\leq i\leq N$. Also, for each $1\leq i\leq N$, it is clear from the definition of $T_{\overline{\alpha}_i}$ that $\tn  T_{\overline{\alpha}_i}(u)-\xi_1\tn  >0$ for all $u\in \overline{H}$, and so $T_{{{\overline{\alpha}}}_i}$ is not invertible. Hence, the spectrum of $T_{{\overline{\alpha}}_i}$ has a  real non-negative boundary point, and so  $T_{{\overline{\alpha}}_i}$ has a real non-negative approximate eigenvalue for each $1\leq i\leq N$ by \cite[Proposition IV.1]{KrivineMaurey1981}. By Lemma \ref{circk}, $T_{{\overline{\alpha}}_i}$ commutes with $T_{{\overline{\alpha}}_j}$ for all $1\leq i,j\leq N$.  Thus, by \cite[Proposition 12.18]{BenyaminiLindenstrauss}, there exists $(\beta_i)_{i=1}^N\in \mathbb{R}_+^N$ and a single normalized sequence $(u_n)_{n=1}^\infty\subseteq \overline{H}$ such that $\lim_n\tn T_{{\overline{\alpha}}_i}u_n-\beta_i u_n\tn=0$ for every $1\leq i\leq N$.  As $\tn u_n\tn=1$ for each $n\in \mathbb{N}$, the bounds above for $\tn T_{{\overline{\alpha}}_i}(u)\tn$ yield that $\beta_i\in [\|{\overline{\alpha}}_i\|_\infty,\|{\overline{\alpha}}_i\|_1]$ for each $1\leq i\leq N$.  By density, one may assume that $(u_n)_{n=1}^\infty\subseteq H$ and $1\leq \tn u_n\tn \leq 2$ for all $n\in \mathbb{N}$. Finally, let $\delta>0$ be such that $\rho_{\text{Id}}(\delta/2)>\gamma$ and let $\sigma_n$ be the type realized by $\delta u_n$ for each $n\in \mathbb{N}$. The result now follows by letting $b_1=\rho_{\text{Id}}(\delta)$ and $b_2=\omega_\text{Id}(3\delta)$ (see Lemma \ref{N(u-v)}).
\end{proof}

\begin{lemma}\label{limitofseq}
Suppose $(\overline{\alpha}_i)_{i=1}^N\subseteq \mathbb{Q}^{<\mathbb{N}}$ is such that $\overline{\alpha}_i\circ \overline{\alpha}_j=\overline{\alpha}_j\circ \overline{\alpha}_i$ for all $1\leq i,j\leq N$.  Then there is $(\beta_i)_{i=1}^N\in \mathbb{R}^N$ such that every $\sigma\in \mathcal{C}$ is the limit of a coarse $(\overline{\alpha}_i,\beta_i)_{i=1}^N$-approximating sequence and $\beta_i\in [\|\overline{\alpha}_i\|_\infty,\|\overline{\alpha}_i\|_1]$ for all $1\leq i\leq N$.
\end{lemma}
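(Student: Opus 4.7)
My plan is to exploit the minimality of the fixed closed admissible conic class $\mathcal{C}$, furnished by Proposition \ref{minimal}. Using Lemma \ref{case3}, I first fix scalars $(\beta_i)_{i=1}^N$ with $\beta_i \in [\|\overline{\alpha}_i\|_\infty, \|\overline{\alpha}_i\|_1]$ together with a coarse $(\overline{\alpha}_i, \beta_i)_{i=1}^N$-approximating sequence whose type values at $(1,0)$ all lie in some interval $[b_1, b_2]$ with $\gamma < b_1$. The compactness of $\mathcal{T}_{b_2}$ (Lemma \ref{T_Mcompact}) lets me pass to a subsequence with an admissible limit. I then define $\mathcal{C}'$ to be the set of all $\sigma \in \mathcal{C}$ that arise as the limit of some coarse $(\overline{\alpha}_i, \beta_i)_{i=1}^N$-approximating sequence. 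By construction, $\mathcal{C}'$ contains an admissible type, and $\mathcal{C}' \subseteq \mathcal{C} \subseteq \mathcal{S}$. The aim is to show $\mathcal{C}'$ is a closed admissible conic class, so that minimality of $\mathcal{C}$ forces $\mathcal{C}' = \mathcal{C}$.

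Dilation closure is nearly immediate: if witnesses $(u_n, u_{i,n})$ realize $\sigma_n \to \sigma$, then for $\lambda \in \mathbb{Q}$ the witnesses $(\lambda u_n, \lambda u_{i,n})$ realize $\lambda \cdot \sigma_n \to \lambda \cdot \sigma$ by Lemma \ref{tauk}, linearity of $T_{\overline{\alpha}_i}$, and right-continuity of dilation (Lemma \ref{rightcont}). For convolution closure, given approximating sequences $(\sigma_n) \to \sigma$ and $(\tau_n) \to \tau$ with witnesses $(u_n, u_{i,n})$ and $(v_n, v_{i,n})$ respectively, for each $n$ I choose a shift $m_n$ large enough that the supports of $s_{m_n}(v_n)$, $s_{m_n}(v_{i,n})$, and their images under $T_{\overline{\alpha}_i}$ all sit strictly to the right of those of $u_n$, $T_{\overline{\alpha}_i}(u_n)$, and $u_{i,n}$ for every $i$. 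Exploiting the commutation $T_{\overline{\alpha}_i} \circ s_m = s_{|\overline{\alpha}_i| m} \circ T_{\overline{\alpha}_i}$ and the $1$-spreading of $(\xi_n)$, the vectors $w_n := u_n + s_{m_n}(v_n)$ realize $\sigma_n \ast \tau_n$ by Remark \ref{remrealizes}, and an appropriate additional shift of the $v_{i,n}$ assembles spreadings $w_{i,n}$ of $w_n$ with $\tn T_{\overline{\alpha}_i}(w_n) - \beta_i w_{i,n}\tn \to 0$.

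The main obstacle is that convolution is only separately continuous (Lemma \ref{sepcont}), so one cannot directly conclude $\sigma_n \ast \tau_n \to \sigma \ast \tau$. I would handle this with a diagonal argument: for each fixed $m$, separate continuity gives $\sigma_n \ast \tau_m \to \sigma \ast \tau_m$ as $n \to \infty$, while $\sigma \ast \tau_m \to \sigma \ast \tau$ as $m \to \infty$. By metrizability of $\mathcal{T}$ one extracts $n_m \to \infty$ so that $\sigma_{n_m} \ast \tau_m \to \sigma \ast \tau$, and the corresponding combined witnesses from the previous paragraph still form a coarse $(\overline{\alpha}_i, \beta_i)_{i=1}^N$-approximating sequence. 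A parallel diagonalization shows that $\mathcal{C}'$ is topologically closed in $\mathcal{C}$: if $\rho^{(k)} \to \rho$ in $\mathcal{C}'$ with respective approximating sequences, a standard diagonal extraction produces an approximating sequence with limit $\rho$. Assembling these pieces, $\mathcal{C}'$ is a closed admissible conic class contained in $\mathcal{C}$, and minimality of $\mathcal{C}$ delivers $\mathcal{C}' = \mathcal{C}$.
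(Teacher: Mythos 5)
Your proposal is correct and follows essentially the same route as the paper: define the subset $\tilde{\mathcal{C}}$ of limits of coarse $(\overline{\alpha}_i,\beta_i)_{i=1}^N$-approximating sequences starting from Lemma \ref{case3}, verify it is a closed admissible conic class (handling the merely separate continuity of convolution by the same diagonal extraction the paper uses, and closing under dilation and under limits by the same routine arguments), and invoke minimality of $\mathcal{C}$. The only cosmetic difference is that you spell out the commutation of $T_{\overline{\alpha}_i}$ with the shift maps when disjointifying supports, a point the paper leaves implicit.
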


\begin{proof}
Let $\gamma< b_1\leq b_2$, $(\beta_i)_{i=1}^N\in \mathbb{R}^N$ and  $(\sigma_n)_{n=1}^\infty$ be given by  Lemma \ref{case3}, so that $(\sigma_n)_{n=1}^\infty$ is a coarse $(\overline{\alpha}_i,\beta_i)_{i=1}^N$-approximating sequence  and  $b_1\leq \sigma_n(1,0)\leq b_2$ for every $n\in \mathbb{N}$. Let $\tilde{\mathcal{C}}$ be the subset of $\mathcal{C}$ consisting of all types of $\mathcal{C}$ which are the limit of a coarse $({\overline{\alpha}}_i,\beta_i)_{i=1}^N$-approximating sequence. 
As $\mathcal{T}_{b_1,b_2}\coloneqq\{\sigma\in \mathcal{T}\mid b_1\leq \sigma(1,0)\leq b_2\}$ is compact and metrizable, $(\sigma_n)_n$ has a converging subsequence which converges to an element $\sigma\in \mathcal{C}\cap\mathcal{T}_{b_1,b_2}$. 
A subsequence of a coarse $({\overline{\alpha}}_i,\beta_i)_{i=1}^N$-approximating sequence is still a coarse $({\overline{\alpha}}_i,\beta_i)_{i=1}^N$-approximating sequence, so we have that $\tilde{\mathcal{C}}\neq \{0\}$, and in particular $\tilde{\mathcal{C}}$ contains an admissible type.

By the minimality of $\mathcal{C}$, it is enough to show that $\tilde{\mathcal{C}}$ is a closed conic class.  
Suppose $\sigma\in \tilde{\mathcal{C}}$ and $(\sigma_n)_{n=1}^\infty$ is a coarse $({\overline{\alpha}}_i,\beta_i)_{i=1}^N$-approximating sequence converging to $\sigma$.
Then, by Lemma \ref{rightcont}, $\lambda\cdot \sigma$ is the limit of $(\lambda\cdot \sigma_n)_{n=1}^\infty$, which is easily seen to be a coarse $({\overline{\alpha}}_i,\beta_i)$-approximating sequence for every $\lambda\in \mathbb{Q}$.
Thus $\tilde{C}$ is closed under dilation by any $\lambda\in \mathbb{Q}$.

 Let $D$ be a metric compatible with the topology of $\mathcal{T}$.  Say $\sigma,\tau\in \tilde{\mathcal{C}}$ and let us show that $\sigma*\tau\in \tilde{\mathcal{C}}$. 
 Let $(\sigma_n)_{n=1}^\infty$ and $(\tau_n)_{n=1}^\infty$ be coarse $({\overline{\alpha}}_i,\beta_i)_{i=1}^N$-approximating sequences in $\mathcal{C}$ converging to $\sigma$ and $\tau$, respectively.  
 As the convolution is separately continuous, we have $\lim_k \sigma_k\ast \tau =\sigma\ast \tau$ and, for a fixed $k\in\N$, $\lim_n\sigma_k*\tau_n=\sigma_k*\tau$. For each $k\in\N$, let $n(k)\geq k$ be such that 
 
\[D(\sigma_k*\tau_{n(k)},\sigma_k *\tau)\leq 2^{-k}.\]\hfill
 
 \noindent If we set $\sigma'_k=\sigma_k*\tau_{n(k)}$, then $\lim_k\sigma_k'=\sigma*\tau$. 
 
For each $1\leq i\leq N$, let $(u_{n})_{n=1}^\infty$, $(u_{i,n})_{n=1}^\infty$, $(v_{n})_{n=1}^\infty$  and  $(v_{i,n})_{n=1}^\infty$ be sequences realizing $(\sigma_n)_{n=1}^\infty$ and $(\tau_n)_{n=1}^\infty$ respectively, as given by the definition of  coarse $({\overline{\alpha}}_i,\beta_i)_{i=1}^N$-approximating sequences. By translating the supports of $v_{n(k)}$ and $v_{i,n(k)}$ if necessary, we may assume that $\text{supp}(u_k)<\text{supp}(v_{n(k)})$ and $\text{supp}(u_{i,k})<\text{supp}(v_{{i,n(k)}})$ for all $1\leq i\leq N$ and $k\in\N$.  Let  $(z_{k})_{k=1}^\infty=(u_{k}+v_{n(k)})_{k=1}^\infty$, so  $z_k$ realizes $\sigma'_k$ for each $k\in\N$.  Set $(z_{i,k})_{k=1}^\infty=(u_{i,k}+v_{i,n(k)})_{k=1}^\infty$ for all $1\leq i\leq N$, so $z_{i,k}$ is a spreading of $z_k$. This gives us that  $(\sigma'_k)_{k=1}^\infty$ is a coarse $({\overline{\alpha}}_i,\beta_i)_{i=1}^N$-approximating sequence.  Thus $\sigma\ast \tau \in \tilde{\mathcal{C}}$, and so $\tilde{\mathcal{C}}$ is closed under convolution.
 
 Finally, let us show that $\tilde{\mathcal{C}}$ is closed. Say $(\sigma_k)_{k=1}^\infty$ is a sequence in $\tilde{\mathcal{C}}$ converging to $\sigma\in \mathcal{C}$. For each $k\in \N$, there exists a coarse $({\overline{\alpha}}_i,\beta_i)_{i=1}^N$-approximating sequence $(\sigma_{k,n})_{n=1}^\infty$ in $\mathcal{C}$ converging to $\sigma_k$.
 For each $k\in\N$, let $(u_{k,n})_{n=1}^\infty$ be a sequence realizing $(\sigma_{k,n})_{n=1}^\infty$ and let $(u_{k,i,n})_{n=1}^\infty$ be a spreading of $(u_{k,n})_{n=1}^\infty$ for each $1\leq i\leq N$ as given by Definition \ref{defapptypes}.
  For each $k\in\N$, choose an integer $n(k)\geq k$ such that $D(\sigma_{k,n(k)},\sigma_k)\leq 1/k$ and $\tn T_{{\overline{\alpha}}_i}(u_{k,n(k)})-\beta_i u_{k,i,n(k)}\tn<1/k$ for each $1\leq i\leq N$. 
  Set $\tau_k=\sigma_{k,n(k)}$ for each $k\in\N$. Then $(\tau_k)_{k=1}^\infty$ is a coarse $({{\overline{\alpha}}_i},\beta_i)_{i=1}^N$-approximating sequence converging to $\sigma$.
  That is, $\sigma\in \tilde{\mathcal{C}}$. 
  Thus $\tilde{\mathcal{C}}$ is closed since $\sigma$ was an arbitrary limit point.
  By what was shown, $\tilde{\mathcal{C}}$ is a closed admissible conic class contained in $\mathcal{C}$ and by the minimality of $\mathcal{C}$, we are finished.
\end{proof}

\begin{prop}\label{theoremspecialtype}
Suppose $(\overline{\alpha}_i)_{i=1}^N\subseteq \mathbb{Q}^{<\mathbb{N}}$ is such that $\overline{\alpha}_i\circ \overline{\alpha}_j=\overline{\alpha}_j\circ \overline{\alpha}_i$ for all $1\leq i,j\leq N$.  There exists $\overline{\beta}=(\beta_i)_{i=1}^N\in \R^N$ such that $\beta_i\in [\|\overline{\alpha}_i\|_\infty,\|\overline{\alpha}_i\|_1]$ for all $1\leq i\leq N$ and 

\[\limsup_m \Big|\bigast_{i=1}^Nb_i\cdot \widehat{T}^{k_i}_{\overline{\alpha}_i}\psi (\lambda, x)-\bigast_{i=1}^Nb_i{\beta_{i,m}^{k_i}}\cdot \psi (\lambda,x)\Big|\leq \gamma\] \hfill

\noindent for every  $(b_i)_{i=1}^N\in \Q^N$, every $(k_i)_{i=1}^N\in \N^N$, every $(\lambda, x)\in \mathbb{Q}\times \Delta$, and every sequence $(\overline{\beta}_{m})_{m=1}^\infty\subseteq \mathbb{Q}_+^N$ converging to $\overline{\beta}$, where $\overline{\beta}_{m}=(\beta_{i,m})_{i=1}^N$ for all $m\in\N$. 
\end{prop}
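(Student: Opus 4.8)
The plan is to obtain everything from Lemma~\ref{limitofseq}, the realizer calculus of Section~\ref{SectionModel}, and the single common point of continuity $\phi$ produced by Lemma~\ref{commonpoint}, transferring all assertions about $\psi$ to assertions about $\phi$ through the identity $\psi=\phi\ast(-1)\cdot\phi$. First I would apply Lemma~\ref{limitofseq} to $(\overline{\alpha}_i)_{i=1}^N$ (the commutativity hypothesis is exactly what that lemma needs) to get $\overline{\beta}=(\beta_i)_{i=1}^N$ with $\beta_i\in[\|\overline{\alpha}_i\|_\infty,\|\overline{\alpha}_i\|_1]$ for each $i$ and such that every type in $\mathcal{C}$ -- in particular $\phi$ -- is the limit of a coarse $(\overline{\alpha}_i,\beta_i)_{i=1}^N$-approximating sequence $(\phi_n)_{n=1}^\infty$. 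By Corollary~\ref{alphabetak}, for the given exponents $(k_i)_{i=1}^N$ the sequence $(\phi_n)_n$ is also a coarse $(\overline{\alpha}_i^{\circ k_i},\beta_i^{k_i})_{i=1}^N$-approximating sequence, so I fix realizers $u_n\in H$ of $\phi_n$ and spreadings $(u_{i,n})_n$ of $(u_n)_n$ with $\lim_n\tn T_{\overline{\alpha}_i^{\circ k_i}}u_n-\beta_i^{k_i}u_{i,n}\tn=0$ for each $i$. Since $\phi_n(1,0)\to\phi(1,0)<\infty$ and $\mathrm{Id}$ is expanding, Lemma~\ref{N(u-v)}(i) forces $C:=\sup_n\tn u_n\tn<\infty$ (this is the one point where coarse, as opposed to merely topological, equivalence is used), whence also $\tn u_{i,n}\tn\le C$ for all $i,n$.

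The next step is the reduction to $\phi$. Since $\widehat{T}_{\overline{\mu}}$ distributes over convolution and commutes with dilation by $-1$, one has $\widehat{T}^{k_i}_{\overline{\alpha}_i}\psi_n=\widehat{T}^{k_i}_{\overline{\alpha}_i}\phi_n\ast(-1)\cdot\widehat{T}^{k_i}_{\overline{\alpha}_i}\phi_n$, where $\psi_n:=\phi_n\ast(-1)\cdot\phi_n$; using this, commutativity and associativity of $\ast$, and absorbing each $\beta_{i,m}^{k_i}$ into a length-one multi-index, both $\bigast_{i=1}^N b_i\cdot\widehat{T}^{k_i}_{\overline{\alpha}_i}\psi_n$ and $\bigast_{i=1}^N b_i\beta_{i,m}^{k_i}\cdot\psi_n$ can be written as $2N$-fold convolutions $\bigast_{i=1}^{2N}c_i\cdot\widehat{T}_{\overline{\mu}_i}\phi_n$ carrying the \emph{same} system of scalars $c_i$ ($c_i=b_i=c_{N+i}$), the multi-indices $\overline{\mu}_i$ being $\overline{\alpha}_i^{\circ k_i}$, $-\overline{\alpha}_i^{\circ k_i}$, $(\beta_{i,m}^{k_i})$ or $(-\beta_{i,m}^{k_i})$ in the relevant blocks. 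Applying Lemma~\ref{N(u-v)ext2} with $\sigma=\tau=\phi_n$, all left vectors equal to $u_n$ and right vectors chosen among the $u_{i,n}$, and using $T_{-\overline{\mu}}\xi=-T_{\overline{\mu}}\xi$ together with $\tn u_{i,n}\tn\le C$ and the triangle inequality $\tn T_{\overline{\alpha}_i^{\circ k_i}}u_n-\beta_{i,m}^{k_i}u_{i,n}\tn\le\tn T_{\overline{\alpha}_i^{\circ k_i}}u_n-\beta_i^{k_i}u_{i,n}\tn+|\beta_i^{k_i}-\beta_{i,m}^{k_i}|\,C$, I obtain
\[
\Big|\bigast_{i=1}^N b_i\cdot\widehat{T}^{k_i}_{\overline{\alpha}_i}\psi_n(\lambda,x)-\bigast_{i=1}^N b_i\beta_{i,m}^{k_i}\cdot\psi_n(\lambda,x)\Big|\le\inf_{\eps>0}\omega_{\mathrm{Id}}\Big(2|\lambda|\sum_{i=1}^N|b_i|\big(\tn T_{\overline{\alpha}_i^{\circ k_i}}u_n-\beta_i^{k_i}u_{i,n}\tn+|\beta_i^{k_i}-\beta_{i,m}^{k_i}|\,C\big)+\eps\Big).
\]

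Finally I would let $n\to\infty$ and then $m\to\infty$. Each map $\sigma\mapsto\bigast_{i=1}^{2N}c_i\cdot\widehat{T}_{\overline{\mu}_i}\sigma$ equals $\sigma\mapsto\bigast_j\nu_j\cdot\sigma$ for a single $\overline{\nu}\in\Q^{<\N}$, hence belongs to the family $\mathcal{F}$ for which $\phi$ is a common point of continuity; since $\phi_n\to\phi$ and convergence in $\mathcal{T}$ is coordinatewise, the left-hand side above converges to $L_m:=\big|\bigast_{i=1}^N b_i\cdot\widehat{T}^{k_i}_{\overline{\alpha}_i}\psi(\lambda,x)-\bigast_{i=1}^N b_i\beta_{i,m}^{k_i}\cdot\psi(\lambda,x)\big|$. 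On the right, as $\tn T_{\overline{\alpha}_i^{\circ k_i}}u_n-\beta_i^{k_i}u_{i,n}\tn\to0$ and $\omega_{\mathrm{Id}}$ is nondecreasing, for each fixed $\eps>0$ the $\limsup_n$ of the right-hand side is at most $\omega_{\mathrm{Id}}(D_m+2\eps)$ with $D_m:=2|\lambda|\sum_i|b_i|\,|\beta_i^{k_i}-\beta_{i,m}^{k_i}|\,C$, so $L_m\le\inf_{t>D_m}\omega_{\mathrm{Id}}(t)$; and since $\overline{\beta}_m\to\overline{\beta}$ gives $D_m\to0^+$ while $\inf_{t>s}\omega_{\mathrm{Id}}(t)\to\inf_{t>0}\omega_{\mathrm{Id}}(t)=\gamma$ as $s\to0^+$, we conclude $\limsup_m L_m\le\gamma$, as desired. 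I expect the main obstacle to be the bookkeeping of the second step: arranging the $\widehat{T}$'s, the dilations, and the extra convolution coming from $\psi=\phi\ast(-1)\cdot\phi$ so that the two $2N$-fold convolutions being compared carry \emph{identical} systems of scalars (which is what makes Lemma~\ref{N(u-v)ext2} directly applicable), while keeping in mind that continuity is available only at $\phi$, never at $\psi$, which is precisely why the whole estimate must be run along $\phi_n\to\phi$ rather than along an approximating sequence for $\psi$; a secondary nuisance is the passage to $\gamma$ at the end, since $\omega_{\mathrm{Id}}$ need not be continuous at $0$.
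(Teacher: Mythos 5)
Your proposal is correct and follows essentially the same route as the paper: Lemma \ref{limitofseq} produces $\overline{\beta}$ and a coarse approximating sequence $(\phi_n)_{n=1}^\infty$ converging to $\phi$, Corollary \ref{alphabetak} handles the exponents $k_i$, the common continuity point from Lemma \ref{commonpoint} transfers the estimate from $\psi_n=\phi_n\ast(-1)\cdot\phi_n$ to $\psi$, and Lemma \ref{N(u-v)ext2} plus the $1$-spreading property give the factor-$2$ bound before taking limits in $n$ and then $m$. The only difference is cosmetic: the paper realizes $\psi_n$ by disjointly supported differences $u_n-u_n'$ and applies Lemma \ref{N(u-v)ext2} with $N$ terms, whereas you unfold each $\psi_n$ into a $2N$-fold convolution of dilated $\phi_n$'s; both yield the identical estimate, and your treatment of the final passage to $\gamma$ (via $\inf_{t>D_m}\omega_{\mathrm{Id}}(t)\to\gamma$) is in fact more explicit than the paper's.
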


\begin{proof}
Let  $(\beta_i)_{i=1}^N\in \mathbb{R}^N$ be given by Lemma \ref{limitofseq} and let $(\phi_n)_{n=1}^\infty$ be a coarse $(\overline{\alpha}_i,\beta_i)_{i=1}^N$-approximating sequence converging to $\phi$, also given by  Lemma \ref{limitofseq}. For each $n\in\N$ let $\psi_n=\phi_n*(-1)\cdot\phi_n$. Then, by our choice of $\phi$ (see Lemma \ref{commonpoint}) we have that

\[\lim_n\bigast_{i=1}^N b_i\cdot\widehat{T}^{k_i}_{\overline{\alpha}_i}\psi_n (\lambda, x)=\bigast_{i=1}^N b_i\cdot \widehat{T}^{k_i}_{\overline{\alpha}_i}\psi(\lambda, x)\] and \[ \lim_n\bigast_{i=1}^N b_i\beta_{i,m}^{k_i}\cdot \psi_n(\lambda,x)=\bigast_{i=1}^N b_i\beta_{i,m}^{k_i}\cdot \psi (\lambda,x)\]\hfill

\noindent for all $(\lambda,x)\in\Q\times \Delta$ and all $m\in\N$. 

By Corollary \ref{alphabetak}, $(\phi_n)_{n=1}^\infty$ is a coarse $(\overline{\alpha}^{\circ k_i}_i,\beta_i^{k_i})_{i=1}^N$-approximating sequence and we can pick a sequence  $(u_n)_{n=1}^\infty$  realizing $(\phi_n)_{n=1}^\infty$ and sequences $(u_{i,n})_{n=1}^\infty$ which are spreadings of $(u_n)_{n=1}^\infty$ and satisfy $\lim_n\tn T_{\overline{\alpha}^{\circ k_i}}u_n-\beta_i^{k_i}u_{i,n}\tn=0$ for every $1\leq i\leq N$.  For each $n\in \mathbb{N}$, let $u_n'\in H$ have the same basis coordinates as $u_n$ except shifted over so that $\mathrm{supp}(u_n)< \mathrm{supp}(u_n')$ and $\mathrm{supp}(u_{i,n})<\mathrm{supp}(u_n')$ for every $1\leq i\leq N$. 
For each $1\leq i\leq N$ and $n\in \mathbb{N}$, let $u'_{i,n}$ be a spreading of $u_n$ so that $T_{\overline{\alpha}^{\circ k_i}_i}u_n'-\beta^{k_i} u_{i,n}'$ is a spreading of $T_{\overline{\alpha}^{\circ k_i}_i}u_n-\beta^{k_1} u_{i,n}$ and such that $\mathrm{supp}(u_{i,n})<\mathrm{supp}(u_{i,n}')$. 

Notice that  both $u_{n}-u_n'$ and $u_{i,n}-u_{i,n}'$ realize $\psi_n$. Therefore, by Lemma \ref{N(u-v)ext2}, we have that

\begin{align*}
\Big|\bigast_{i=1}^N b_i\cdot &\widehat{T}^{k_i}_{\overline{\alpha}_i} \psi_n (\lambda, x)-\bigast_{i=1}^N b_i\beta^{k_i}_{i,m}\cdot \psi_n (\lambda,x)\Big|\\
&=\Big|\bigast_{i=1}^N b_i\cdot \widehat{T}_{\overline{\alpha}^{\circ k_i}_i} \psi_n (\lambda, x)-\bigast_{i=1}^N b_i\beta^{k_i}_{i,m}\cdot \psi_n (\lambda,x)\Big|\\
&\leq \inf_{\eps>0}\omega_{\text{Id}}\Big(|\lambda|\sum_{i=1}^N  |b_i|\cdot\tn T_{\overline{\alpha}^{\circ k_i}_i}(u_n-u'_n)-\beta^{k_i}_{i,m}(u_{i,n}-u'_{i,n})\tn +\eps\Big)\\
&\leq \inf_{\eps>0}\omega_{\text{Id}}\Big(2|\lambda|\sum_{i=1}^N |b_i|\cdot\tn T_{\overline{\alpha}^{\circ k_i}_i}u_n-\beta^{k_i}_{i,m}u_{i,n}\tn +\eps\Big)\\
&\leq \inf_{\eps>0}\omega_{\text{Id}}\Big(2|\lambda|\Big(\sum_{i=1}^N |b_i|\cdot \left(\tn T_{\overline{\alpha}^{\circ k_i}_i}u_n-\beta^{k_i}_iu_n\tn+ |\beta^{k_i}_i-\beta^{k_i}_{i,m}|\cdot\tn u_n\tn\right)\Big) +\eps\Big)
\end{align*}\hfill

\noindent for all $(\lambda,x)\in \Q\times \Delta$. As the sequence $(u_n)_{n=1}^\infty$ is bounded (see Lemma \ref{N(u-v)}), taking the limit superiors over $n$ and $m$ in the inequality above yields the result.
\end{proof}

\section{Coarse $\ell_p$-types and coarse $c_0$-types. }\label{SectionCoarselp}

In this section, we will define a notion of  $\ell_p$-type and $c_0$-type and use  Proposition \ref{theoremspecialtype} in order to show that $\psi$ satisfies this property. Finally, we  will  show that $\overline{H}$ is isomorphic to $\ell_p$ for some $p\in[1,\infty )$.

\begin{defi} 
Let $p\in [1,\infty)$. We say that $\psi$ is a \emph{coarse $\ell_p$-type} if there exists $L>0$ such that, for all $(\lambda,x)\in \Q\times \Delta$, and all $\overline{\alpha}=(\alpha_i)_{i=1}^N\in\Q^{<\N}$, we have

\[\limsup_m\Big|\bigast_{i=1}^N\alpha_i\cdot\psi(\lambda,x)-t_m\cdot\psi(\lambda,x)\Big|\leq L.\]\hfill

\noindent for all $(t_m)_{m=1}^\infty\subseteq \mathbb{Q}$ converging to $\|\overline{\alpha}\|_p$. The type $\psi$ is called a \emph{coarse $c_0$-type} if, for all $(\lambda,x)\in \Q\times \Delta$, and all $(\alpha_i)_{i=1}^N\in\Q^{<\N}$, we have

\[\Big|\bigast_{i=1}^N\alpha_i\cdot\psi(\lambda,x)-\max_{1\leq i\leq N}|\alpha_i|\cdot\psi(\lambda,x)\Big|\leq L.\]\hfill

\end{defi}

\begin{prop}\label{ThmExistencelptypes}
The type $\psi$ is either a coarse $c_0$-type or a coarse $\ell_p$-type for some $p\in[1,\infty)$.
\end{prop}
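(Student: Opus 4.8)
The plan is to mimic the classical Krivine–Maurey argument for the existence of an $\ell_p$- or $c_0$-type, but using the approximate version provided by Proposition \ref{theoremspecialtype} in place of the exact functional equation $\sigma \ast \alpha \cdot \sigma = (1+|\alpha|^p)^{1/p}\cdot \sigma$. The starting point is the special case $N=1$, $\overline{\alpha}=(1,1)$ of Proposition \ref{theoremspecialtype}: this yields a single constant $\beta \in [1,2]$ such that, for every $k\in\N$, every $b\in\Q$, every $(\lambda,x)\in \Q\times\Delta$, and every sequence $(\beta_m)_m\subseteq\Q_+$ converging to $\beta$,
\[\limsup_m\Big| b\cdot\widehat{T}^{k}_{(1,1)}\psi(\lambda,x)-b\beta_m^{k}\cdot\psi(\lambda,x)\Big|\leq\gamma.\]
Since $\widehat{T}_{(1,1)}\psi = \psi\ast\psi$ and more generally $\widehat{T}^{k}_{(1,1)}\psi = \bigast_{j=1}^{2^k}\psi$, the real number $\beta$ governs how the "norm" of a type grows under self-convolution: convolving $2^k$ copies of $\psi$ is approximately the same (within $\gamma$, after dilating by $b$ and passing to $\limsup$) as dilating $\psi$ by $\beta^k$. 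First I would extract $p\in[1,\infty]$ from $\beta$ by setting $\beta = 2^{1/p}$ (with $p=\infty$ corresponding to $\beta=1$, which will be the $c_0$-case) — this is forced because convolving $2^{k+\ell}$ copies must agree both with the $\beta^{k+\ell}$-dilation and with iterating, giving the only consistent scaling law.

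Next I would bootstrap from powers of $2$ to arbitrary lengths. For a general $\overline{\alpha}=(\alpha_i)_{i=1}^N\in\Q^{<\N}$, the key observation is that all the operators $T_{(1,1)}$ and $T_{\overline{\alpha}}$ need not commute, so one cannot directly apply Proposition \ref{theoremspecialtype} to the family $\{(1,1),\overline{\alpha}\}$. The standard workaround, following Krivine, is to first establish the result for $\overline{\alpha}$ of the form $(\underbrace{1,\dots,1}_{n})$ for arbitrary $n$ (not just $n=2^k$): one writes $n$ in terms of powers of $2$ and uses the approximate sub/superadditivity coming from Lemma \ref{N(u-v)ext2} together with the $1$-spreading, $1$-unconditional structure of $(\xi_n)$ to interpolate, absorbing a bounded error each time. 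This shows $\bigast_{i=1}^n\psi \approx n^{1/p}\cdot\psi$ (within a universal constant $L_0$ depending only on $\gamma$, crucially \emph{not} on $n$ — this independence is exactly what Lemma \ref{commonpoint} and the "$L$ does not depend on the length" discussion before it were designed to guarantee). Then for rational $\alpha_i$ one clears denominators: write each $\alpha_i = p_i/q$ with common denominator $q$, so that $\bigast_{i=1}^N \alpha_i\cdot\psi$ relates to $\bigast$ of $\sum|p_i|$ many copies of $(1/q)\cdot\psi$, reducing the general dilation coefficients to the constant-$1$ case and producing $\|\overline{\alpha}\|_p$ via $\big(\sum |p_i|^{?}\big)$ — here one must be careful: one actually needs to handle $\alpha_i$'s of \emph{differing} sizes, which is done by approximating each $|\alpha_i|$ by a dyadic rational and using the commuting family $\{(1,1)\}\cup\{\text{dyadic rescalings}\}$, all of which \emph{do} commute since they are length-$\le 2$ with a single nonzero slot or are $(1,1)$; more precisely one uses that $T_{(c,0)}$ (scaling by $c$) commutes with everything, and applies Proposition \ref{theoremspecialtype} to $\{(1,1)\}$ alone, with the $b_i$'s and $k_i$'s doing the combinatorial work.

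The cleanest route is probably: apply Proposition \ref{theoremspecialtype} to the single sequence $\overline{\alpha}_1=(1,1)$, getting $\beta=2^{1/p}$; then for fixed $\overline{\alpha}=(\alpha_i)_{i=1}^N$ and fixed large $k$, approximate each $|\alpha_i|$ to within $\eps$ by $\beta^{k_i}\cdot 2^{-M}$ for suitable integers $k_i$ and a common $M$ (possible since $\beta^{1/k}\to 1$), so that $\bigast_{i=1}^N\alpha_i\cdot\psi$ is close to $\bigast_{i=1}^N 2^{-M}\beta^{k_i}\cdot\psi$, which by the $N=1$ statement applied with $b_i=2^{-M}$ is within $\gamma$ of $\bigast_{i=1}^N 2^{-M}\cdot\widehat{T}^{k_i}_{(1,1)}\psi = \bigast_{i=1}^N 2^{-M}\cdot\bigast_{j=1}^{2^{k_i}}\psi$, i.e.\ $\bigast$ of $\sum_i 2^{k_i}$ copies of $2^{-M}\cdot\psi$, which is $\approx (\sum_i 2^{k_i})^{1/p}\cdot 2^{-M}\cdot\psi \approx \big(\sum_i |\alpha_i|^p\big)^{1/p}\cdot\psi = \|\overline{\alpha}\|_p\cdot\psi$ when $p<\infty$, and $\approx \max_i|\alpha_i|\cdot\psi$ when $\beta=1$ (the $c_0$-case, where $2^{k_i}$ drops out and $\max$ survives because $\bigast$ of $n$ copies of a fixed type is $\approx$ that type, up to constant, when $\beta=1$). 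Throughout, the only inequalities available are the one-sided ones from Lemma \ref{N(u-v)} and Lemma \ref{N(u-v)ext2}, which bound \emph{differences} of type-values by $\omega_\mathrm{Id}$ of sums of $\tn\cdot\tn$-distances; one takes $\eps\to 0$ inside $\omega_\mathrm{Id}$ and uses $\inf_{t>0}\omega_\mathrm{Id}(t)=\gamma$ to collapse every error term to $\gamma$, then sums finitely many $\gamma$'s — so the final constant $L$ will be something like $C\gamma$ for an absolute constant $C$, or at worst a constant depending only on $\gamma$. I expect the main obstacle to be organizing the approximation of arbitrary rational $\alpha_i$'s by $\beta$-powers while keeping the error constant $L$ genuinely independent of $N$: naively each of the $N$ approximation steps contributes $\gamma$, giving $N\gamma$, which is useless, so one must instead do a \emph{single} application of the $\limsup_m$ statement in Proposition \ref{theoremspecialtype} with all $N$ coefficients simultaneously — this is precisely why the proposition is stated for $N$ commuting sequences with a sequence $(\overline{\beta}_m)_m$ converging to $\overline{\beta}$, and the proof of the present proposition must feed in $\overline{\alpha}_i = (1,1)$ for all $i$ (which trivially commute with themselves) and exploit the $b_i$, $k_i$ freedom, so that the "$+\gamma$" is incurred only once.
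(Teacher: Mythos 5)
Your overall strategy --- read Proposition \ref{theoremspecialtype} as an approximate Krivine functional equation, extract a scaling exponent from the all-ones sequences, and then approximate general coefficients --- is indeed the paper's strategy, but the concrete route you settle on has genuine gaps. The first is the approximation step in your ``cleanest route'': approximating each $|\alpha_i|$ by $\beta^{k_i}2^{-M}$ with $\beta=2^{1/p}$ requires the set $\{2^{k/p-M}\mid k,M\in\mathbb{Z}\}$ to be dense in $\R_+$, which fails precisely when $1/p$ is rational --- in particular for $p=1$, where you only reach dyadic powers, and in the $c_0$ case ($\beta=1$), where you reach nothing but the single scale $2^{-M}$. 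The paper avoids this by approximating $|\alpha_j|^p$ (not $|\alpha_j|$) by fractions $n_j/k$ with a common denominator and feeding the commuting family of all-ones sequences of lengths $n_j$ into Proposition \ref{theoremspecialtype} in a single application; the coefficient $\|\overline{\alpha}\|_p$ then emerges from counting $\sum_j n_j$ copies of $\psi$, which works for every $p$. Relatedly, your claim that $\beta=2^{1/p}$ ``is forced'' only expresses self-consistency of powers of a single $\beta$; it does not establish the crucial compatibility $\beta_n=n^{1/p}$ across different lengths $n$ with one common exponent. That compatibility is the heart of the first half of the paper's argument: one applies Proposition \ref{theoremspecialtype} to the commuting pair $(1,1)$ and $(1,1,1)$, uses $3^k\leq 2^\ell<3^{k+1}$ together with realized vectors $\sum_i\xi_i$ and Lemma \ref{N(u-v)} to bound the ratio $\beta_3^k/\beta_2^\ell$ above and below independently of $k,\ell$, and concludes $\beta_n=n^L$ for all $n$. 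You gesture at an interpolation via ``approximate sub/superadditivity'' but never carry it out, and your final plan bypasses it.

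The second gap is the $c_0$ case. When $\beta=1$ your scheme yields no control over coefficients of differing sizes, and the assertion that ``$\max$ survives'' is exactly what needs proof. The paper handles this with a separate argument: normalize so $\alpha_1=1\geq\alpha_j$, apply Proposition \ref{theoremspecialtype} to the sequence $\overline{\alpha}$ itself to get some $\beta\in[\|\overline{\alpha}\|_\infty,\|\overline{\alpha}\|_1]$, then show $\beta\leq 1$ by iterating $\widehat{T}_{\overline{\alpha}}^k$, expanding into multinomial terms indexed by $\{\overline{n}\in\N_0^k\mid\sum n_j=k\}$, and bounding the norm of a realizing vector by $\frac{\mu\tn\xi_1\tn}{\beta_M^k}\prod_{\alpha_j<1}\frac{1}{1-\alpha_j}$, which forces $\beta=1$ as $k\to\infty$. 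Without some substitute for this iteration, your proof does not cover the $c_0$ alternative in the statement. Your closing observation --- that the error must be incurred in a single application of Proposition \ref{theoremspecialtype} rather than once per coefficient --- is correct and is indeed how the paper keeps the final constant at $4\gamma$, but it does not by itself repair the two gaps above.
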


\begin{proof}
Let $\overline{\alpha}_2=(1,1)$ and $\overline{\alpha}_3=(1,1,1)$, and notice that $\overline{\alpha}_2\circ\overline{\alpha}_3=\overline{\alpha}_3\circ\overline{\alpha}_2$. Let $\beta_2,\beta_3\in \R$ be given by Proposition \ref{theoremspecialtype} for $\overline{\alpha}_2=(1)$ and $\overline{\alpha}_3=(1,1)$, respectively. Let   $(\beta_{2,m})_{m=1}^\infty, (\beta_{3,m})_{m=1}^\infty\subseteq \mathbb{Q}$ be nonzero increasing  sequences converging to $\beta_2, \beta_3$ respectively. By our choice of $\beta_2$ and $\beta_3$, we have that

\[\limsup_m\Big|b\cdot\bigast_{i=1}^{j^k}\psi(\lambda,x)-b\beta_{j,m}^k\cdot\psi(\lambda,x)\Big|\leq \gamma\]\hfill
 
 \noindent for all $j\in \{2,3\}$, all $b\in \mathbb{Q}$, all $k\in\N$, and all $(\lambda,x)\in \Q\times \Delta$.

 Let $\ell,k\in\N$ be such that $3^{k}\leq 2^\ell< 3^{k+1}$.  As $(\xi_n)_{n=1}^\infty$ is $1$-unconditional,  we have

\[\TN  \sum_{i=1}^{3^k}\xi_i\TN  \leq \TN  \sum_{i=1}^{2^\ell}\xi_i\TN  \leq \TN  \sum_{i=1}^{3^{k+1}}\xi_i\TN  .\]\hfill

\noindent Let $a_\ell\in \mathbb{Q}$ be between $\frac{1}{2}\tn  \sum_{i=1}^{2^\ell}\xi_i\tn$ and $\tn  \sum_{i=1}^{2^\ell}\xi_i\tn$. Then, for any $\mu>0$,

\[\mu\leq \TN  \mu\cdot\frac{ \sum_{i=1}^{3^{k+1}}\xi_i}{a_\ell}\TN  .\]\hfill

As $\text{Id}\colon (X,\|\cdot\|)\to (X,d)$ is expanding, we can pick  $\mu\in \mathbb{Q}$ such that $\rho_\text{Id}(\mu/2)>2\omega_{\text{Id}}(1)+\gamma$ and $\eta\in \mathbb{Q}$ such that $\rho_\mathrm{Id}(\eta \|\xi_1\|/2)>2\omega_\mathrm{Id}(1)+\gamma$. Let $M\in \mathbb{N}$ be such that

\[\Big|\frac{\mu}{a_\ell}\cdot\bigast_{i=1}^{3^{k+1}}\psi(1,0)-\frac{\mu\beta_{3,M}^{k+1}}{a_\ell}\cdot\psi(1,0)\Big|\leq \gamma +\omega_\mathrm{Id}(1)\]\hfill

\noindent and let $N\geq M$ be such that

\[\Big|\frac{\eta}{\beta_{2,M}^\ell}\cdot\bigast_{i=1}^{2^\ell}\psi(1,0)-\frac{\eta\beta_{2,N}^\ell}{\beta_{2,M}^\ell}\cdot\psi(1,0)\Big|\leq \gamma +\omega_\mathrm{Id}(1).\]\hfill

\noindent  Then as  $(\mu/a_\ell)\cdot(\sum_{i=1}^{3^{k+1}}\xi_i) $ realizes $(\mu/a_l)\cdot \bigast_{i=1}^{3^{k+1}}\psi$,  by Lemma \ref{N(u-v)}(i),  we have that

\[2\omega_{\text{Id}}(1)+\gamma< \frac{\mu}{a_\ell}\cdot\bigast_{i=1}^{3^{k+1}}\psi(1,0)\leq \frac{\mu\beta^{k+1}_{3,M}}{a_\ell}\cdot\psi(1,0)+\gamma+\omega_\mathrm{Id}(1).\]\hfill

\noindent  Therefore, as $(\mu\beta_{3,M}^{k+1}/a_\ell)\cdot \xi_1$ realizes $(\mu\beta_{3,M}^{k+1}/a_\ell)\cdot\psi$,  by Lemma \ref{N(u-v)}(ii), we have

\begin{align}\label{Ineq1}
1\leq \frac{\beta_{3,M}^{k+1}\mu}{a_\ell}\cdot\tn  \xi_1\tn  .
\end{align}\hfill

Similarly, by Lemma \ref{N(u-v)}(i) and the fact that $(\eta \beta_{2,N}^\ell/\beta_{2,M}^\ell)\cdot \xi_1$ realizes $(\eta \beta_{2,N}^\ell/\beta_{2,M}^\ell)\cdot \psi$, we have 

 \[\frac{\eta}{\beta_{2,M}^\ell}\cdot \bigast_{i=1}^{2^\ell}\psi(1,0)\geq \frac{\eta\beta_{2,N}^\ell}{\beta_{2,M}^\ell}\cdot\psi(1,0)-\gamma-\omega_\mathrm{Id}(1)>\omega_{\text{Id}}(1).\]\hfill

\noindent  Hence, as $(\eta/\beta^\ell_{2,M})\cdot(\sum_{i=1}^{2^\ell}\xi_i)$ realizes $(\eta/\beta_{2,M}^\ell)\cdot \bigast_{i=1}^{2^\ell}\psi$, Lemma \ref{N(u-v)}(ii) gives us 

\begin{align}\label{Ineq2}
\frac{2\eta a_\ell}{\beta_{2,M}^\ell}\geq\TN  \frac{\eta\sum_{i=1}^{2^\ell}\xi_i}{\beta_{2,M}^\ell}\TN  \geq 1.
\end{align}\hfill

Combining Inequalities (\ref{Ineq1}) and (\ref{Ineq2}), we obtain 

\[\frac{\beta_3^k}{\beta_2^\ell}=\lim_M \frac{\beta_{3,M}^k}{\beta_{2,M}^\ell}\geq  \frac{1}{2\eta\mu \beta_3  \tn  \xi_1\tn  }.\]\hfill

\noindent The lower bound  for $\beta_3^k/\beta_2^\ell$ above does not depend on $k$ and $\ell$, as long as $2^\ell<3^{k+1}$. Similarly, we get a lower bound for $\beta_2 ^\ell/\beta_3^k$, which also does not depend on $k$ and $\ell$, as long as $3^k\leq 2^\ell$. We conclude that there exist $a,b>0$ such that for all $k$ and $\ell$, with $3^k\leq 2^{\ell}<3^{k+1}$, we have

\[a\leq \frac{\beta_3^k}{\beta_2^\ell}\leq b.\]\hfill

\noindent Therefore, there exists $L\geq 0$ such that $\beta_2=2^L$, and $\beta_3=3^L$. Also, as $\beta_2\leq 2$, we must have $L\in [0,1]$. The same argument works for arbitrary $n,m\in \mathbb{N}$ instead of $2$ and $3$. Hence, we have $\beta_n=n^L$, for all $n\in\N$, where $\beta_n$ is given by Proposition \ref{theoremspecialtype} for 

\[\overline{\alpha}=\underbrace{(1,\ldots ,1)}_{n}.\]\hfill

\noindent \textbf{Case 1:} Say $L\neq 0$. Then $\psi$ is a coarse $\ell_p$-type, for $p=1/L$.\\

Fix $\overline{\alpha}=(\alpha_i)_{i=1}^N\in \mathbb{Q}^N$ and a sequence $(t_m)_{m=1}^\infty\subseteq \mathbb{Q}$ converging to $\|\overline{\alpha}\|_p$. Let $\epsilon>0$ and, for each $1\leq j\leq N$, let $r_j\in \mathbb{Q}_+$ be such that $||\alpha_j|-r_j^{1/p}|<\eps$.
Find a common denominator $k\in \mathbb{N}$ so that for each $1\leq j\leq N$ there is $n_j\in \mathbb{N}_0$ such that $r_j=n_j/k$.
Let $s>0$ be a rational number such that $|s-(1/k)^{1/p}|<\eps$.
For each $1\leq j\leq N$, let $(\beta_{j,m})_{m=1}^\infty\subseteq \mathbb{Q}$ be a sequence converging to $n_j^{1/p}$and let $(\beta_{m})_{m=1}^\infty\subseteq\mathbb{Q}$ be a sequence converging to $(\sum_{j=1}^Nn_j)^{1/p}$.
By Lemma \ref{N(u-v)ext2} (and the symmetry of $\psi$),

\[\Big|\bigast_{j=1}^N\alpha_j\cdot \psi(\lambda,x)-\bigast_{j=1}^Ns\beta_{j,m}\cdot\psi(\lambda,x)\Big|\leq \omega_\mathrm{Id}\Big(|\lambda|\sum_{j=1}^N||\alpha_j|-s\beta_{j,m}|\tn \xi_j\tn+\eps\Big)\]\hfill

\noindent and

\[|s\beta_m\cdot \psi(\lambda,x)-t_m\cdot\psi(\lambda,x)|\leq \omega_\mathrm{Id}(|\lambda| |s\beta_m-t_m|\tn \xi_1\tn +\eps).\]\hfill

\noindent for all $(\lambda,x)\in \mathbb{Q}\times \Delta$.
By Proposition \ref{theoremspecialtype} and what was shown above with $L=1/p$, we have that

\[\limsup_m\Big|\bigast_{j=1}^N s\beta_{j,m}\cdot \psi(\lambda,x)-\bigast_{j=1}^Ns\cdot\bigast_{i=1}^{n_j} \psi(\lambda,x)\Big|\leq \gamma\]\hfill

\noindent and

\[\limsup_m\Big|s\cdot\bigast_{j=1}^N\bigast_{i=1}^{n_j}\psi(\lambda,x)-s\beta_{m}\cdot \psi(\lambda,x)\Big|\leq \gamma\]\hfill

\noindent for all $(\lambda,x)\in \mathbb{Q}\times\Delta$.

Combining the four inequalities above with the triangle inequality, taking a limit superior over $m$, and letting $\epsilon\to 0$, one obtains

\[\limsup_m\Big|\bigast_{j=1}^N\alpha_j\cdot \psi(\lambda,x)-t_m\cdot\psi(\lambda,x)\Big|\leq 4\gamma\]\hfill

\noindent for all $(\lambda,x)\in \mathbb{Q}\times \Delta$.
Therefore $\psi$ is a coarse $\ell_p$-type.
\\

\noindent \textbf{Case 2:} Say $L=0$. Then $\psi$ is a coarse $c_0$-type.\\

Fix $\overline{\alpha}=(\alpha_i)_{i=1}^N\in \mathbb{Q}^N$ such that $\alpha_1=1$ and $\alpha_j\leq 1$ for $2\leq j\leq N$ (the general case will follow by dilation).
Using Proposition \ref{theoremspecialtype}, find $\beta\geq 1$ and a nonzero increasing sequence $(\beta_m)_{m=1}^\infty\subseteq \mathbb{Q}$ converging to $\beta$ such that
 
\[\limsup_m |b\cdot\widehat{T}_{\overline{\alpha}}^k\psi(\lambda,x)-b\beta_m^k\cdot \psi(\lambda,x)|\leq \gamma\]\hfill

\noindent for all $b\in \mathbb{Q}$, $k\in \mathbb{N}$ and $(\lambda,x)\in \mathbb{Q}\times \Delta$.
We will show $\beta\leq 1$. Fix $k\in \mathbb{N}$ and note that $\widehat{T}_{\overline{\alpha}}^k\psi=\bigast_{i_k=1}^N\cdots\bigast_{i_1=1}^N(\prod_{\ell=1}^k\alpha_{i_\ell})\cdot \psi$ (using the definition of $\widehat{T}_{\overline{\alpha}}$ and the distributivity of dilation over convolution).
After combining like terms using the commutativity of convolution, by Proposition \ref{theoremspecialtype} and what was shown above with $L=0$, we have

\[ \Big|b\cdot\widehat{T}_{\overline{\alpha}}^k\psi(\lambda,x) - b\cdot\bigast_{\overline{n}\in F} (\prod_{j=1}^k\alpha_j^{n_j})\cdot\psi(\lambda,x)\Big|\leq \gamma\]\hfill

\noindent where $F=\{\overline{n}=(n_j)_{j=1}^k\in \mathbb{N}_0^k\ |\ \sum_{j=1}^k n_j=k\}$
for every $b\in \mathbb{Q}$ and $(\lambda,x)\in \mathbb{Q}\times \Delta$.
Now, take any $\mu\in \mathbb{Q}$ such that $\rho_\mathrm{Id}(\mu \tn \xi_1\tn/2)>2\omega_\mathrm{Id}(1)+2\gamma$.
Fix $M\in \mathbb{N}$, and let $N\geq M$ be such that $|\frac{\mu}{\beta_{M}^k}\widehat{T}_{\overline{\alpha}}^k\psi(1,0)-\frac{\mu \beta_{N}^k}{\beta_M^k}\cdot \psi(1,0)|\leq \gamma+\omega_\mathrm{Id}(1)$.
Then combining the two inequalities above yields

\[\Big|\frac{\mu\beta_N^k}{\beta_M^k}\cdot \psi(1,0)-\frac{\mu}{\beta_M^k}\cdot\bigast_{\overline{n}\in F} (\prod_{j=1}^k\alpha_j^{n_j})\cdot\psi(1,0)\Big|\leq 2\gamma +\omega_\mathrm{Id}(1).\]\hfill

\noindent As $(\mu \beta_N^k/\beta_M^k)\xi_1$ realizes $(\mu \beta_N^k/\beta_M^k)\cdot\psi$, we have, by Lemma \ref{N(u-v)}(i),
$\frac{\mu}{\beta_M^k}\cdot\bigast_{\overline{n}\in F} (\prod_{j=1}^k\alpha_j^{n_j})\cdot\psi(1,0)\geq \omega_\mathrm{Id}(1)$.
So, as $\frac{\mu}{\beta_M^k}\sum_{\overline{n}\in F} (\prod_{j=1}^k\alpha_j^{n_j})\cdot\xi_{I(\overline{n})}$ realizes $\frac{\mu}{\beta_M^k}\cdot\bigast_{\overline{n}\in F} (\prod_{j=1}^k\alpha_j^{n_j})\cdot\psi$ for any injective map $I\colon F\to\mathbb{N}$, we have, by Lemma \ref{N(u-v)}(ii),

\[1\leq \TN\frac{\mu}{\beta_M^k}\sum_{\overline{n}\in F} (\prod_{j=1}^k\alpha_j^{n_j})\cdot\xi_{I(\overline{n})}\TN\leq \frac{\mu \tn\xi_1\tn}{\beta_M^k}\prod_{\alpha_j<1}\frac{1}{1-\alpha_j}.\]\hfill

\noindent But this was for any $k,M\in \mathbb{N}$, and so we must have $\beta\leq 1$. That is, $\beta=1$. Therefore $\psi$ is a coarse $c_0$-type.
\end{proof}

We can now prove the following.

\begin{prop}\label{Thmlpspreading}
If $\psi$ is a coarse $\ell_p$-type, for some $p\in [1,\infty)$,  then $(\xi_n)_{n=1}^\infty$ is equivalent to the $\ell_p$-basis. If $\psi$ is a coarse $c_0$-type, then $(\xi_n)_{n=1}^\infty$ is equivalent to the $c_0$-basis. 
\end{prop}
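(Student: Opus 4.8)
The plan is to produce constants $0<c\le C$ such that $c\|\overline{\alpha}\|_p\le \TN\sum_{i=1}^N\alpha_i\xi_i\TN\le C\|\overline{\alpha}\|_p$ for every $\overline{\alpha}=(\alpha_i)_{i=1}^N\in\mathbb{Q}^{<\mathbb{N}}$; since $\mathrm{span}_\mathbb{Q}\{\xi_i\mid i\in\mathbb{N}\}$ is dense in $\overline{H}$ and both sides are continuous in $\overline{\alpha}$ and positively homogeneous, this gives that $(\xi_n)_{n=1}^\infty$ is equivalent to the $\ell_p$-basis. Using homogeneity once more, namely replacing $\overline{\alpha}$ by $q\overline{\alpha}$ for a suitable $q\in\mathbb{Q}_+$, it suffices to establish the two inequalities for $\overline{\alpha}$ with $\|\overline{\alpha}\|_p\in[1,2]$. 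Fix such an $\overline{\alpha}$. Recall, as in Lemma \ref{tauk}, that $u:=\sum_{i=1}^N\alpha_i\xi_i$ realizes $\sigma:=\bigast_{i=1}^N\alpha_i\cdot\psi$; let $(y_n)_{n=1}^\infty$ be the defining sequence for $\psi$ fixed in Section \ref{SectionModel}, choose $\delta_0>0$ and $B<\infty$ with $\delta_0\le\|y_n\|\le B$ for all large $n$ (the upper bound coming from the boundedness of defining sequences, the lower one from the admissibility of $\psi$), and let $L$ be the constant in the definition of coarse $\ell_p$-type.

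For the upper bound, fix $(t_m)_{m=1}^\infty\subseteq\mathbb{Q}$ with $t_m\to\|\overline{\alpha}\|_p$. Then $t_m\cdot\psi(1,0)=\lim_n d(t_m y_n,0)\le\omega_{\mathrm{Id}}(|t_m|B)\le\omega_{\mathrm{Id}}(3B)$ once $m$ is large enough that $|t_m|\le 3$, so the coarse $\ell_p$-type inequality at $(\lambda,x)=(1,0)$ gives $\sigma(1,0)\le\omega_{\mathrm{Id}}(3B)+L=:C_1$. By Lemma \ref{N(u-v)}(i), $\TN u\TN>\delta$ would force $\sigma(1,0)\ge\rho_{\mathrm{Id}}(\delta)$; since $\mathrm{Id}$ is expanding, $\rho_{\mathrm{Id}}(t)\to\infty$, so fixing $\delta$ with $\rho_{\mathrm{Id}}(\delta)>C_1$ yields $\TN u\TN\le\delta=:C_2$, a bound independent of $\overline{\alpha}$.

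For the lower bound, first fix a positive integer $q$ with $\rho_{\mathrm{Id}}(q\delta_0/2)-L>\omega_{\mathrm{Id}}(1)$, which is possible because $\rho_{\mathrm{Id}}(t)\to\infty$ while $L,\delta_0,\omega_{\mathrm{Id}}(1)$ are fixed. The vector $qu=\sum_{i=1}^N(q\alpha_i)\xi_i$ realizes $q\cdot\sigma=\bigast_{i=1}^N(q\alpha_i)\cdot\psi$, and $\|(q\alpha_i)_{i=1}^N\|_p=q\|\overline{\alpha}\|_p\in[q,2q]$. Picking $(s_m)_{m=1}^\infty\subseteq\mathbb{Q}$ with $s_m\to q\|\overline{\alpha}\|_p$, we get $s_m\cdot\psi(1,0)=\lim_n d(s_m y_n,0)\ge\rho_{\mathrm{Id}}(|s_m|\delta_0)\ge\rho_{\mathrm{Id}}(q\delta_0/2)$ for $m$ large (since $|s_m|\ge q/2$ eventually), so the coarse $\ell_p$-type inequality gives $q\cdot\sigma(1,0)\ge\rho_{\mathrm{Id}}(q\delta_0/2)-L>\omega_{\mathrm{Id}}(1)$. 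Lemma \ref{N(u-v)}(ii) then forces $\TN qu\TN\ge 1$, i.e.\ $\TN u\TN\ge 1/q$. Together with the upper bound and $\|\overline{\alpha}\|_p\in[1,2]$ this yields the required two-sided estimate, completing the $\ell_p$ case.

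The $c_0$ case is entirely parallel: one replaces $\|\cdot\|_p$ by $\|\cdot\|_\infty$ throughout and uses the coarse $c_0$-type inequality in place of the $\ell_p$ one. Here the approximating rationals $(t_m),(s_m)$ may be dispensed with, since $\max_{1\le i\le N}|\alpha_i|$ and $\max_{1\le i\le N}|q\alpha_i|=q\max_{1\le i\le N}|\alpha_i|$ are already rational, so $\max_i|\alpha_i|\cdot\psi(1,0)$ and its $q$-multiple can be plugged in directly. The only genuine obstacle in both cases is the additive slack $L$ (respectively $\gamma$) carried by the type inequalities, which blocks a naive lower estimate $\TN\sum\alpha_i\xi_i\TN\gtrsim\|\overline{\alpha}\|$; this is circumvented exactly by first normalizing $\overline{\alpha}$ into the compact range $\|\overline{\alpha}\|\in[1,2]$ and then rescaling by a single fixed integer $q$ large enough for the growth of $\rho_{\mathrm{Id}}$ to absorb that slack.
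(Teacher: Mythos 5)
Your proof is correct and follows essentially the same route as the paper: both arguments combine the coarse $\ell_p$-type inequality with Lemma \ref{N(u-v)} and use the expansiveness of $\mathrm{Id}$ (i.e., $\rho_{\mathrm{Id}}(t)\to\infty$) together with homogeneity to absorb the additive slack $L$. The only differences are cosmetic bookkeeping: you normalize $\|\overline{\alpha}\|_p$ into $[1,2]$ and rescale by a fixed integer $q$, and you estimate $t_m\cdot\psi(1,0)$ directly from the defining sequence of $\psi$, whereas the paper evaluates at $(b,0)$ with $b\approx 1/\tn\xi_1\tn$ and compares against the vector $t_{\overline{\alpha}}\xi_1$ realizing $t_{\overline{\alpha}}\cdot\psi$.
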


\begin{proof}
Say $\psi\in\mathcal{T}$ is a coarse $\ell_p$-type for some $p\in [1,\infty)$ (the $c_0$ case will be analogous). Say $L>0$ is such that, for any $(\alpha_j)_{j=1}^N\in \Q^{<\mathbb{N}}$, any $(t_m)_{m=1}^\infty\subseteq \mathbb{Q}$ converging to $\|\overline{\alpha}\|_p$, and all $(\lambda, x)\in \Q\times \Delta$, we have

\begin{align}\label{Iqfinal2}
\limsup_m\Big|\bigast_{j=1}^N\alpha_j\cdot\psi(\lambda,x)-t_m\cdot \psi(\lambda, x)\Big|\leq L.
\end{align}\hfill

Let $(e_n)_{n=1}^\infty$ be the standard basis of $\ell_p$, and let $Y=\text{span}\{e_n\mid n\in\N\}$. Let us show that the map $T\colon Y\to \text{span}\{\xi_n\mid n\in\N\}$ defined by sending $e_n$ to $\xi_n/\tn\xi_1\tn$ for each $n\in \mathbb{N}$ and extending linearly is an isomorphism.  Hence, $T$ extends to an isomorphism between $\ell_p$ and $\overline{\text{span}}\{\xi_n\mid n\in\N\}$, and we are done.

We first show that $T$ is bounded. Fix a small $\eps>0$ and let $b\in\Q$ be such that $1/\tn\xi_1\tn<b<1/\tn\xi_1\tn+\eps$. For each $\overline{\alpha}=(\alpha_i)_{i=1}^N\in\Q^{<\N}$, let $t_{\overline{\alpha}}\in \mathbb{Q}$ be such that $|t_{\overline{\alpha}} -\|\overline{\alpha}\|_p|<\eps$ and $|\bigast_{j=1}^N\alpha_j\cdot\psi(b,0)-t_{\overline{\alpha}}\cdot \psi(b, 0)|\leq L+\eps$. By Lemma \ref{N(u-v)} and Inequality \ref{Iqfinal2}, we have that

\begin{align*}
\rho_{\text{Id}}\Big(\TN  \sum_{i=1}^N \alpha_i \frac{\xi_j}{\tn\xi_1\tn}\TN -\eps\Big) &\leq \rho_{\text{Id}}\Big(\TN  \sum_{i=1}^N \alpha_i b\xi_j\TN -\eps\Big)\\
&\leq \bigast_{i=1}^N \alpha_i\cdot \psi(b,0)\\
&\leq t_{\overline{\alpha}}\cdot\psi(b,0)+L+\eps\\
&\leq \omega_\text{Id}\Big(b\tn  \xi_1\tn   t_{\overline{\alpha}}+\eps\Big)+L+\eps\\
&\leq \omega_\text{Id}\Big(\|\overline{\alpha}\|_p+2\eps +\eps\|\xi_1\| \|\overline{\alpha}\|_p+\eps^2\|\xi_1\|\Big)+L+\eps,
\end{align*}\hfill

\noindent for all $\overline{\alpha}=(\alpha_i)_{i=1}^N\in \Q^{<\N}$. Hence, as $\text{Id}\colon(X,\|\cdot\|)\to (X,d)$ is expanding,  there exists $K>0$ such that $\|\overline{\alpha}\|_p\leq 1$ implies $\tn  \sum_{i=1}^N\alpha_i \frac{\xi_i}{\tn\xi_1\tn}\tn  \leq K$. Therefore $T$ is bounded.

Clearly, $T$ is a bijection. Let us show that $T^{-1}$ is bounded. By Lemma \ref{N(u-v)} and Inequality \ref{Iqfinal2}, we have that

\begin{align*}
\rho_\text{Id}\Big(\|\overline{\alpha}\|_p-2\eps\Big)-L-\eps &\leq
\rho_\text{Id}\Big(bt_{\overline{\alpha}}\tn\xi_1\tn-\eps\Big)-L-\eps\\
&\leq t_{\overline{\alpha}}\cdot \psi(b,0)-L-\eps\\
&\leq  \bigast_{i=1}^N \alpha_i\cdot\psi(b,0)\\
&\leq \omega_\text{Id}\Big(b\TN  \sum_{i=1}^N \alpha_i\xi_i\TN+\eps   \Big)\\
&\leq \omega_\text{Id}\Big( \TN  \sum_{i=1}^N \alpha_i\frac{\xi_i}{\tn\xi_1\tn}\TN+\eps \tn\xi_1\tn \TN  \sum_{i=1}^N \alpha_i\frac{\xi_i}{\tn\xi_1\tn}\TN   \Big).
\end{align*}\hfill

\noindent for all $\overline{\alpha}=(\alpha_i)_{i=1}^N\in \Q^{<\N}$. Hence, as $\text{Id}\colon(X,\|\cdot\|)\to (X,d)$ is expanding, there exists some $R>0$ such that $\tn\sum_{i=1}^N \alpha_i\frac{\xi_i}{\tn\xi_1\tn}\tn\leq 1$ implies $\|\overline{\alpha}\|_p<R$. So $T^{-1}$ is bounded.
\\
\\
\end{proof}

We now have everything needed to prove the main theorem.

\begin{maintheorem}
If a Banach space $X$ coarsely embeds into a superstable Banach space, then $X$ has a basic sequence with an associated spreading model isomorphic to $\ell_p$, for some $p\in[1,\infty)$.
\end{maintheorem}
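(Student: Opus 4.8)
The plan is to put together the machinery built up in Sections \ref{SectionType}--\ref{SectionCoarselp}; the proof of Theorem \ref{main} is itself mostly bookkeeping, with a single genuinely delicate point at the end. First I would reduce to the case that $X$ is separable and infinite-dimensional: passing to the closed span of a linearly independent sequence, every infinite-dimensional $X$ contains a separable infinite-dimensional subspace $X_0$, a coarse embedding of $X$ restricts to a coarse embedding of $X_0$, and a basic sequence in $X_0$ with a prescribed spreading model is a basic sequence in $X$ with the same spreading model. With $X$ separable and infinite-dimensional, Corollary \ref{invmetric2} provides a translation-invariant stable pseudometric $d$ on $X$ that is coarsely equivalent to $\|\cdot\|$, placing us exactly in the setting fixed from Section \ref{SectionType} onward. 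I would then take over the objects fixed in Sections \ref{SectionConicClass}--\ref{SectionModel}: a minimal closed admissible conic class $\mathcal{C}$, an admissible common point of continuity $\phi\in\mathcal{C}$ for the family $\mathcal{F}$ with $\psi=\phi\ast(-1)\cdot\phi$ admissible, a defining sequence $(x_n)_{n=1}^\infty\subseteq\Delta$ for $\phi$ generating the spreading model $(S,\tn\cdot\tn)$, and the sequence $(y_n)_{n=1}^\infty=(x_{2n-1}-x_{2n})_{n=1}^\infty\subseteq\Delta\subseteq X$, which is a defining sequence for $\psi$ and whose associated spreading sequence is $(\xi_n)_{n=1}^\infty$, where $\xi_n=\zeta_{2n-1}-\zeta_{2n}$.

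Next I would record that $(\xi_n)_{n=1}^\infty$ is $1$-spreading and $1$-sign unconditional, hence a basic sequence, and that, $\psi$ being an admissible symmetric type, Proposition \ref{symadmnonrel} shows $(y_n)_{n=1}^\infty$ has no $\|\cdot\|$-Cauchy subsequence, so that the spreading model $\overline{\text{span}}\{\xi_n\mid n\in\mathbb{N}\}$ is genuinely infinite-dimensional. By Proposition \ref{ThmExistencelptypes}, $\psi$ is either a coarse $c_0$-type or a coarse $\ell_p$-type for some $p\in[1,\infty)$, and Proposition \ref{Thmlpspreading} converts this into: $(\xi_n)_{n=1}^\infty$ is equivalent to the $\ell_p$-basis, respectively to the $c_0$-basis.

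If $\psi$ is a coarse $\ell_p$-type, the proof is complete: as $(y_n)_{n=1}^\infty$ is bounded with no $\|\cdot\|$-Cauchy subsequence, a standard selection argument (see, e.g., \cite{AK}) yields a basic subsequence $(y_{n_k})_{k=1}^\infty$; passing to a subsequence does not alter the associated spreading model since $(\xi_n)_{n=1}^\infty$ is $1$-spreading; and $\overline{\text{span}}\{\xi_n\mid n\in\mathbb{N}\}\cong\ell_p$ by Proposition \ref{Thmlpspreading}. Thus $(y_{n_k})_{k=1}^\infty$ is a basic sequence in $X$ whose associated spreading model is isomorphic to $\ell_p$.

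The hard part is excluding the coarse $c_0$-type case; here I would show that no coarse $c_0$-type can exist over the stable pseudometric $d$. If $\psi$ were a coarse $c_0$-type, then $(\xi_n)_{n=1}^\infty$ would be $C$-equivalent to the $c_0$-basis for some $C$, by Proposition \ref{Thmlpspreading}, and, repeating the estimates of Proposition \ref{Thmlpspreading} with $c_0$ in place of $\ell_p$ — using Lemma \ref{N(u-v)}, Lemma \ref{N(u-v)ext2}, and the defining inequality of a coarse $c_0$-type, applied to a subsequence of $(y_n)_{n=1}^\infty$ stabilized for the spreading-model estimates — one would build a map $F\colon c_0\to(X,d)$ with the following two features: first, because the $c_0$-type inequality has a fixed additive error $L$ independent of the number of terms, $d$-distances of finite blocks $\sum_i c_i y_{n_i}$ are controlled by $\max_i|c_i|$ up to this error, exactly as in $c_0$, which keeps $\omega_F$ finite; second, because $d$ is coarsely equivalent to $\|\cdot\|$ and $(y_n)_{n=1}^\infty$ is bounded away from $0$ in norm (admissibility of $\psi$), $d(c\,y_n,0)$ grows without bound in $|c|$, which makes $\rho_F$ proper. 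Hence $F$ would be a coarse embedding of $c_0$ into $(X,d)$; passing to the metric quotient of $(X,d)$ if necessary, this contradicts the fact recalled in Section \ref{SectionBackground} that $c_0$ does not coarsely embed into any stable metric space. Therefore $\psi$ must be a coarse $\ell_p$-type for some $p\in[1,\infty)$, and the theorem follows.
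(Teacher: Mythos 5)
Your reduction to the separable case, your assembly of the conic-class machinery, and your use of Propositions \ref{ThmExistencelptypes} and \ref{Thmlpspreading} all match the paper. The genuine gap is in your exclusion of the coarse $c_0$-type case. The coarse $c_0$-type inequality and Lemmas \ref{N(u-v)}, \ref{N(u-v)ext2} control $d\big(\sum_i \alpha_i y_{n_i},0\big)$ in terms of $\max_i|\alpha_i|$ only in an \emph{iterated limit} over the indices $n_1<\dots<n_k$ (equivalently, for indices chosen after $k$, $\varepsilon$ and the coefficients are fixed); likewise the spreading-model approximation $\big|\tn\sum\alpha_j\xi_j\tn-\|\sum\alpha_j y_{n_j}\|\big|<\varepsilon$ holds only for $n_1\geq N(k,\varepsilon)$, with $N(k,\varepsilon)\to\infty$ in $k$. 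So what you actually obtain is that $\ell_\infty^k$ embeds into $(X,d)$ with uniform constants for every $k$ — finite representability of $c_0$ — not a single globally defined map $F\colon c_0\to(X,d)$: if you let the indices depend on the vector, you lose all control of $d(F(\overline{\alpha}),F(\overline{\beta}))$ since the two images live over different index sets, and if you insist on one fixed assignment $e_i\mapsto y_{m_i}$, the approximation fails for long blocks. Stitching the finite-dimensional copies into one coarse embedding is exactly the step that requires an ultraproduct, and this is what the paper does: $c_0$, being separable and finitely representable in $X$, embeds isometrically into an ultrapower $X^I/\mathcal{U}$; the coarse embedding $X\to Y$ induces, coordinatewise, a coarse embedding $X^I/\mathcal{U}\to Y^I/\mathcal{U}$; and $Y^I/\mathcal{U}$ is \emph{stable because $Y$ is superstable} (this is the second essential use of superstability). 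Then \cite[Theorem 2.1]{Kalton2007} (stable spaces coarsely embed into reflexive ones) and \cite[Theorem 3.6]{Kalton2007} ($c_0$ does not coarsely embed into a reflexive space) give the contradiction. Your route, if it worked, would only use stability of $(X,d)$ and would in effect prove that any space with a $c_0$ spreading model coarsely contains $c_0$, which does not follow from the estimates you cite.

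A secondary, fixable gap: a bounded sequence with no norm-Cauchy subsequence need not have a basic subsequence (a sequence converging weakly to a nonzero limit has none, since a weakly convergent basic sequence must be weakly null), so your "standard selection argument" does not apply directly to $(y_n)_{n=1}^\infty$. The paper handles this with Rosenthal's $\ell_1$-theorem: either some subsequence is equivalent to the $\ell_1$-basis (hence basic), or one passes to a weakly Cauchy subsequence and replaces the sequence by consecutive differences, which are seminormalized weakly null, admit a basic subsequence, and still generate an $\ell_p$ spreading model. You should incorporate this step explicitly.
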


\begin{proof}
By Corollary \ref{invmetric2}, if $X$ coarsely embeds into a superstable space $Y$, there exists a translation-invariant stable pseudometric $d$ on $X$ which is coarsely equivalent to the metric induced by the norm of $X$. Hence, we can define the type space $\mathcal{T}$ as in Section \ref{SectionType}.  By Proposition \ref{minimal}, there exists a minimal closed admissible conic class $\mathcal{C}$. Let $\phi\in\mathcal{C}$ be given by Lemma \ref{commonpoint}. Without loss of generality,  $\psi=\phi*(-1)\cdot\phi$ is admissible. By Proposition \ref{ThmExistencelptypes}, $\psi$ is either a $c_0$-type or an $\ell_p$-type, for some $p\in [1,\infty)$.  Hence, by Proposition \ref{Thmlpspreading}, $X$ has a spreading model isomorphic to either $\ell_p$ for some $p\in[1,\infty)$ or to $c_0$. 

Assume that $X$ has a spreading model isomorphic to $c_0$.  In particular, $c_0$ is finitely represented in $X$. Hence, $c_0$ (isometrically) isomorphically embeds into an ultrapower of $X$. As ultrapowers of $X$ coarsely embed into ultrapowers of $Y$, this gives us that $c_0$ coarsely embeds into an ultrapower of $Y$, which is a stable space.  By \cite[Theorem 2.1]{Kalton2007}, stable spaces coarsely embed into reflexive spaces. Therefore, $c_0$ coarsely embeds into a reflexive space. By \cite[Theorem 3.6]{Kalton2007}, this cannot happen, so we have a contradiction. Therefore, $X$ contains a spreading model isomorphic to $\ell_p$, for some $p\in [1,\infty)$.

Let $(x_n)_{n=1}^\infty$ be a bounded sequence in $X$ without Cauchy subsequences whose spreading model is isomorphic to $\ell_p$. Let us observe now that $(x_n)_{n=1}^\infty$ can be assumed to be a basic sequence. By Rosenthal's $\ell_1$-Theorem, either $(x_n)_{n=1}^\infty$ has a subsequence which is equivalent to the basis of $\ell_1$, or it has a weakly Cauchy subsequence. Assume that $(x_n)_{n=1}^\infty$ is weakly Cauchy. Then $(y_n)_{n=1}^\infty$ is weakly null and it has a spreading model isomorphic to $\ell_p$, where $y_n=x_n-x_{n+1}$, for all $n\in\N$. Hence, by taking a subsequence, we can assume that $(y_n)_{n=1}^\infty$ is basic. 
\end{proof}

\begin{rem}\label{constant}
By the last inequality of Case 1 in Proposition \ref{ThmExistencelptypes}, and by following the proof of Proposition \ref{Thmlpspreading}, we find an upper bound of 

\[\Big(\inf_{\eps>0}\sup \rho_\mathrm{Id}^{-1}([0,\omega_\mathrm{Id}(1)+5\gamma +\eps])\Big)^2\]\hfill

\noindent for the Banach-Mazur distance between $\ell_p$ and the spreading model associated to $(y_n)_{n=1}^\infty$.  This is the best the authors have been able to obtain using the methods of Raynaud.
\end{rem}

\begin{maincorollary}
There are separable reflexive Banach spaces which do not coarsely embed into any superstable Banach space. 
\end{maincorollary}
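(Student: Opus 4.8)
The plan is to derive Corollary \ref{corThmPrincipal} from Theorem \ref{main} by producing a separable reflexive Banach space no spreading model of which is isomorphic to $\ell_p$ for any $p\in[1,\infty)$. I would take $X$ to be Schlumprecht's arbitrarily distortable space $S$ (any mixed Tsirelson space built with a sufficiently slowly increasing sequence of parameters would serve as well). Recall that $S$ is separable — the unit vector basis is a Schauder basis — and reflexive, and that it satisfies uniform upper and lower $f$-estimates with $f(n)=\log_2(n+1)$: there is $c\geq 1$ so that every normalized block sequence $(u_i)_{i=1}^{n}$ of the unit vector basis with $n\leq\min\operatorname{supp}(u_1)$ satisfies $c^{-1}\,n/f(n)\leq\big\|\sum_{i=1}^{n}u_i\big\|\leq c\,n/f(n)$; see \cite{ArgyrosTodorcevic} and, for the relevant spreading model background, \cite{BeauzamyLepreste}.

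First I would record the consequence for spreading models of $S$. Given a seminormalized weakly null sequence in $S$, the Bessaga--Pe\l czy\'nski selection principle produces a subsequence that is a small perturbation of a normalized block sequence of the unit vector basis, so the fundamental sequence $(\zeta_n)_{n=1}^\infty$ of its spreading model satisfies $\big\|\sum_{i=1}^n\zeta_i\big\|\asymp n/f(n)$ (with constants depending only on the sequence). Since $n/f(n)$ is unbounded, grows strictly faster than $n^{1/p}$ for every $p\in(1,\infty)$, and satisfies $\tfrac1n\big\|\sum_{i=1}^n\zeta_i\big\|\to 0$, the sequence $(\zeta_n)_{n=1}^\infty$ is equivalent to the unit vector basis of no $\ell_p$ ($1\leq p<\infty$) and of no $c_0$. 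Because a normalized $1$-spreading basis of $\ell_p$ (resp. $c_0$) is automatically equivalent to its unit vector basis — pass to a block-basic, hence $\ell_p$-equivalent, subsequence of it, and use that a $1$-spreading sequence is equivalent to each of its subsequences — it follows that $\overline{\text{span}}\{\zeta_n\mid n\in\N\}$ is isomorphic to no $\ell_p$, $1\leq p<\infty$.

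Next I would run the contrapositive. Suppose $S$ coarsely embeds into a superstable Banach space; by Theorem \ref{main}, $S$ has a basic sequence $(x_n)_{n=1}^\infty$ whose associated spreading model, with fundamental sequence $(\zeta_n)_{n=1}^\infty$, is isomorphic to $\ell_p$ for some $p\in[1,\infty)$, so in particular $\overline{\text{span}}\{\zeta_n\mid n\in\N\}\cong\ell_p$. As $S$ is reflexive and $(x_n)_{n=1}^\infty$ is basic, a subsequence converges weakly; its weak limit is annihilated by every biorthogonal functional and lies in $\overline{\text{span}}\{x_n\}$, hence is $0$, so (after passing to this subsequence) $(x_n)_{n=1}^\infty$ is a seminormalized weakly null sequence in $S$. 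Then the previous paragraph applies and forces $\overline{\text{span}}\{\zeta_n\mid n\in\N\}$ to be isomorphic to no $\ell_p$ — contradicting the line above. Therefore $S$ coarsely embeds into no superstable Banach space, and, being separable and reflexive, it witnesses the corollary. I would also add the remark that this leaves Problem \ref{ProblemReflexEmbStable} untouched: it says nothing about embeddings into merely stable spaces.

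The one genuinely nontrivial input is the classical fact that Schlumprecht's space admits no $\ell_p$ or $c_0$ spreading model, i.e., the uniform $f$-estimates above. The remaining ingredients — passing from a basic sequence to a block sequence via reflexivity and Bessaga--Pe\l czy\'nski, and the elementary observation that $1$-spreading bases of $\ell_p$ are equivalent to the unit vector basis — are routine, and I would expect them to be dispatched in a few lines.
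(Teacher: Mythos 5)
Your overall strategy---exhibit a separable reflexive space none of whose spreading models is isomorphic to $\ell_p$ and invoke Theorem \ref{main}---is exactly the paper's (which uses the original Tsirelson space of \cite{Tsirelson1974}, all of whose spreading models are isomorphic to $c_0$, and remarks that the Odell--Schlumprecht space of \cite{OdellSchlumprecht1995} also works). The gap is in your choice of witness and the estimate you use to justify it. The claimed uniform \emph{upper} $f$-estimate for Schlumprecht's space $S$ is false: while the lower estimate $\|\sum_{i=1}^n u_i\|\geq n/f(n)$ is immediate from the definition of the norm, there is no constant $c$ with $\|\sum_{i=1}^n u_i\|\leq c\,n/f(n)$ for all normalized block sequences with $n\leq\min\mathrm{supp}(u_1)$. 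Indeed, take $u_i=\frac{f(N)}{N}\sum_{j\in F_i}e_j$ with $F_1<\dots<F_n$, $|F_i|=N$, and supports as far to the right as you like; each $u_i$ has norm exactly $1$, yet the functional $\frac{1}{f(nN)}\sum_{j\in\bigcup F_i}e_j^*$ gives $\|\sum_{i=1}^n u_i\|\geq nf(N)/f(nN)\to n$ as $N\to\infty$. For $n$ large this exceeds $c\,n/f(n)$ for any fixed $c$. So sums of long $\ell_1$-averages destroy the upper estimate, and your Bessaga--Pe{\l}czy\'nski reduction then tells you nothing about the growth of $\|\sum_{i=1}^n\zeta_i\|$ for a general spreading model of $S$. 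Whether $S$ admits an $\ell_1$ (or $\ell_p$) spreading model is a genuinely delicate question---it is the subject of work of Androulakis, Odell, Schlumprecht and Tomczak-Jaegermann on the spreading models of $S$---and is emphatically not a ``routine'' consequence of the definition of the norm; note also that Odell and Schlumprecht built the space of \cite{OdellSchlumprecht1995} as a \emph{modification} of $S$ precisely to control all spreading models, which they could not do with $S$ itself. Your parenthetical that ``any mixed Tsirelson space with slowly increasing parameters would serve as well'' is also off: the standard mixed Tsirelson spaces built on Schreier families are asymptotic $\ell_1$, so every spreading model of a block basis \emph{is} equivalent to the $\ell_1$-basis.

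The repair is easy and is what the paper does: replace $S$ by the original Tsirelson space $T^*$, for which every spreading model generated by a normalized weakly null sequence satisfies uniform upper and lower $c_0$-estimates (so $\overline{\mathrm{span}}\{\zeta_n\}\cong c_0\not\cong\ell_p$), or by the Odell--Schlumprecht space, citing \cite[Theorem 1.4]{OdellSchlumprecht1995}. The remaining reductions in your write-up (passing via reflexivity to a weakly null subsequence generating the same spreading model, and the observation that a seminormalized $1$-spreading unconditional basis of a space isomorphic to $\ell_p$ must be equivalent to the $\ell_p$-basis) are fine modulo the standard caveat that one should first pass to a ``good'' subsequence, as in Subsection \ref{SubsectionSpreading}, so that all further subsequences generate the same spreading model.
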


\begin{proof}
This follows from the  fact that the original Tsirelson space (see \cite{Tsirelson1974}) does not have a spreading model isomorphic to $\ell_p$ for any $p\in[1,\infty)$. 
\end{proof}

\begin{remark}
Another example of a reflexive Banach space that does not coarsely embed into any superstable space is the space constructed by Odell and Schlumprecht in  \cite{OdellSchlumprecht1995}. Indeed, this follows from Theorem \ref{main} and the fact that  every spreading model of their space contains neither a subspace isomorphic to $c_0$ nor to $\ell_p$ (see \cite[Theorem 1.4]{OdellSchlumprecht1995}).
\end{remark}

As mentioned in the introduction, our work is not enough to solve Problem \ref{ProbKalton}. The following is a  natural approach to give a negative answer to Problem \ref{ProbKalton}, given Theorem \ref{main}.

\begin{problem}
Let $T$ be the Tsirelson space defined in \cite{FigielJohnson1974}, i.e., the dual of the original space of Tsirelson defined in \cite{Tsirelson1974}. Does $T$ or $T^{(p)}$ (i.e., the $p$-convexification of $T$) for some $p\in [1,\infty)$ coarsely embed into a superstable Banach space?
\end{problem}

\textbf{Acknowledgments:} The first author would like to thank his adviser, Christian Rosendal, for all the help during the preparation of this paper.  He would also like to thank the organizers of the Workshop in Analysis and Probability:  D. Kerr, W. B. Johnson, and G. Pisier.  Part of this research occurred during the workshop held in July 2016.  The second author was partly supported by NSF grants DMS-1464713 and DMS-1565826.  He would like to thank his advisors Thomas Schlumprecht and Florent Baudier for their endless support.


\begin{thebibliography}{99}



\bibitem[AK]{AK}
F. Albiac, and N. J. Kalton, \emph{Topics in Banach Space Theory,} Graduate Texts in Mathematics, Vol. 233, Springer, New York, 2006. 

\bibitem[A]{Aldous1981}
D. J. Aldous, \emph{Subspaces of $L_1$ via random measures}, Trans. Amer. Math. Soc. 267 (1981), no. 2, 445-463. 

\bibitem[AT]{ArgyrosTodorcevic}
S. Argyros, and S.  Todorcevic, \emph{ Ramsey methods in analysis,} Advanced Courses in Mathematics. CRM Barcelona. Birkh\"{a}user Verlag, Basel, 2005. viii+257 pp.

\bibitem[BzL]{BeauzamyLepreste}
B. Beauzamy, J.-T. Laprest\'{e}, Mod\`{e}les \'{e}tal\'{e}s des espaces de Banach, Hermann, Paris, 1984.

\bibitem[BL]{BenyaminiLindenstrauss}
Y. Benyamini, and J. Lindenstrauss, \emph{Geometric nonlinear functional analysis}, Vol. 1, Amer. Math. Soc. Colloq. Publ. 48, Amer. Math. Soc., Providence, RI, 2000.

\bibitem[Br]{BragaEmb}
B. M. Braga, \emph{Coarse and uniform embeddings}, Journal of Functional Analysis 272 (2017), no. 5, 1852-1875. 

\bibitem[Br2]{BragaWeak}
B. M. Braga, \emph{On weaker notions of nonlinear embeddings between Banach spaces,} arXiv:1607.06127.

\bibitem[FJ]{FigielJohnson1974}
T. Figiel, and W. B. Johnson, \emph{A uniformly convex Banach space which contains no $\ell_p$}, Compositio Math. 29 (1974), 179-190.

\bibitem[G]{Garling1981}
D. J. H. Garling, \emph{Stable Banach spaces, random measures and Orlicz function spaces}, Probability measures on groups (Proceedings Oberwolfach), Lecture Notes in Math. 928, Springer-Verlag, New York (1981), 121-175.

\bibitem[G-D]{G-D}
S. Guerre-Delabri\`{e}re, \emph{ Classical sequences in Banach spaces}, Monographs and Textbooks in Pure and Applied Mathematics, 166. Marcel Dekker, Inc., New York, 1992.

\bibitem[HM]{HaydonMaurey1986}
R. Haydon, and B. Maurey, \emph{On Banach spaces with strongly separable types,} J. London Math. Soc. (2) 33 (1986), no. 3, 
484-498.

\bibitem[I]{Iovino1998}
J. Iovino, \emph{Types on stable Banach spaces}, 
Fund. Math. 157 (1998), no. 1, 85-95. 

\bibitem[K]{Kalton2007}
N. Kalton, \emph{Coarse and uniform embeddings into reflexive spaces}, Quart. J. Math. (Oxford) 58 (2007), 393-414.

\bibitem[K2]{Kalton2008}
N. Kalton, \emph{The nonlinear geometry of Banach spaces}, Rev. Mat. Complut. 21 (2008), 7-60. 

\bibitem[K3]{Kalton2012}
N. Kalton, \emph{ The uniform structure of Banach spaces}, Math. Ann. 354 (2012), 1247-1288.

\bibitem[Ke]{Kechris1995}
A. S. Kechris, \emph{Classical Descriptive Set Theory}, Graduate Texts in Mathematics, Vol. 156, Springer, New York, 1995.

\bibitem[KM]{KrivineMaurey1981}
J. Krivine, B. Maurey, \emph{Espaces de Banach stables}, Israel J. Math. 39 (1981), no. 4, 273-295.

\bibitem[Ku]{Kuratowski1966}
K. Kuratowski, \emph{Topology}, Vol. 1, Academic Press, 1966.

\bibitem[OS]{OdellSchlumprecht1995}
E. Odell, and Th. Schlumprecht, \emph{On the richness of the set of $p$'s in Krivine's theorem}. In: Geometric
Aspects of Functional Analysis (Israel, 1992-1994). Oper. Theory Adv. Appl., vol. 77, pp. 177-198.
Birkh\"{a}user, Basel (1995)

\bibitem[R]{Raynaud1983}
Y. Raynaud, \emph{Espaces de Banach superstables, distances stables et hom\'{e}omorphismes uniformes},  Israel J. Math. 44 (1983), no. 1, 33-52.

\bibitem[Ro]{Rosendal2016}
C. Rosendal, \emph{Equivariant geometry of Banach spaces and topological groups}, 	arXiv:1610.01070.

\bibitem[T]{Tsirelson1974}
B. Tsirelson, \emph{It is impossible to imbed $\ell_p$ or $c_0$ into an arbitrary Banach space} (Russian) Funkcional. Anal. i Prilo\v{z}en. 8 (1974), no. 2, 57-60.
\end{thebibliography}
\end{document}